  \pgfplotsset{compat = 1.13,
    colormap name = viridis,
    unbounded coords = jump,
    every axis plot/.append style = {%
      line width = 1.5pt
    }
  }
  \tikzset{external/system call = {%
    pdflatex \tikzexternalcheckshellescape
      -halt-on-error
      -interaction=batchmode
      -jobname "\image" "\texsource"}}
\definecolor{myRed}{HTML}{E34A33}
\definecolor{myBlue}{HTML}{0571B0}
\definecolor{myBrown}{HTML}{A6611A}
\definecolor{matlabBlue}{HTML}{0072BD}
\definecolor{matlabRed}{HTML}{D95319}
\definecolor{matlabYellow}{HTML}{EDB120}
\definecolor{matlabPurple}{HTML}{7E2F8E}
\definecolor{matlabGreen}{HTML}{77AC30}
\definecolor{matlabCyan}{HTML}{4DBEEE}
\definecolor{matlabBrown}{HTML}{A2142F}
\definecolor{linkBlue}{HTML}{0055C9}
\definecolor{linkRed}{HTML}{FF1A24}
\definecolor{linkPurple}{HTML}{6200D9}
\newcommand{%
  \tikzexternalenable%
  \tikzsetnextfilename{}%
  \filemodCmp{graphics/.tikz}{graphics/externalize/.pdf}%
    {\tikzset{external/remake next}}{}%
  \input{graphics/.tikz}%
  \tikzexternaldisable%
}[1]{%
  \tikzexternalenable%
  \tikzsetnextfilename{#1}%
  \filemodCmp{graphics/#1.tikz}{graphics/externalize/#1.pdf}%
    {\tikzset{external/remake next}}{}%
  \input{graphics/#1.tikz}%
  \tikzexternaldisable%
}
\newcommand{\cB}{\ensuremath{\mathcal{B}}}
\newcommand{\cC}{\ensuremath{\mathcal{C}}}
\newcommand{\cK}{\ensuremath{\mathcal{K}}}
\newcommand{\cN}{\ensuremath{\mathcal{N}}}
\newcommand{\hcB}{\ensuremath{\widehat{\mathcal{B}}}}
\newcommand{\hcC}{\ensuremath{\widehat{\mathcal{C}}}}
\newcommand{\hcK}{\ensuremath{\widehat{\mathcal{K}}}}
\newcommand{\hcN}{\ensuremath{\widehat{\mathcal{N}}}}
\newcommand{\hG}{\ensuremath{\widehat{G}}}
\newcommand{\hy}{\ensuremath{\hat{y}}}
\newcommand{\tcN}{\ensuremath{\widetilde{\mathcal{N}}}}
\newcommand{\C}{\ensuremath{\mathbb{C}}}
\newcommand{\R}{\ensuremath{\mathbb{R}}}
\newcommand{\iu}{\ensuremath{\mathrm{i}}}
\newcommand{\trans}{\ensuremath{\mkern-1.5mu\mathsf{T}}}
\newcommand{\herm}{\ensuremath{\mathsf{H}}}
\DeclareMathOperator{\mspan}{span}
\DeclareMathOperator{\lin}{lin}
\renewcommand{\rm}[1]{\ensuremath{\mathrm{#1}}}
\theoremstyle{plain}\newtheorem{theorem}{Theorem}
\theoremstyle{plain}\newtheorem{corollary}{Corollary}
\begin{document}

% *** INFORMATIONS ***
\title{Structure-Preserving Interpolation of Bilinear Control %
  Systems}

\author{% 
  Peter~Benner\thanks{
    Max Planck Institute for Dynamics of Complex Technical Systems,
    Sandtorstr. 1, 39106 Magdeburg, Germany.\newline
    E-mail: \texttt{\href{mailto:benner@mpi-magdeburg.mpg.de}%
      {benner@mpi-magdeburg.mpg.de}}
    \newline
    Otto von Guericke University, Faculty of Mathematics,
    Universit{\"a}tsplatz 2, 39106 Magdeburg, Germany.\newline
    E-mail: \texttt{\href{mailto:peter.benner@ovgu.de}%
      {peter.benner@ovgu.de}}} \and
  Serkan~Gugercin\thanks{
    Department of Mathematics and Computational Modeling and Data 
    Analytics Division, Academy of Integrated Science, Virginia Tech, 
    Blacksburg, VA 24061, USA.\newline
    E-mail: \texttt{\href{mailto:gugercin@vt.edu}%
      {gugercin@vt.edu}}} \and
  Steffen~W.~R.~Werner\thanks{
    Max Planck Institute for Dynamics of Complex
    Technical Systems, Sandtorstr. 1, 39106 Magdeburg, Germany.\newline
    E-mail: \texttt{\href{mailto:werner@mpi-magdeburg.mpg.de}%
      {werner@mpi-magdeburg.mpg.de}}}
}

\date{~}

% *** TITLE PAGE ***
\maketitle

%%%%%%%%%%%%%%%%%%%%%%%%%%%%%%%%%%%%%%%%%%%%%%%%%%%%%%%%%%%%%%%%%%%%%%%%%%%%%%%%
% *** ABSTRACT ***                                                             %
%%%%%%%%%%%%%%%%%%%%%%%%%%%%%%%%%%%%%%%%%%%%%%%%%%%%%%%%%%%%%%%%%%%%%%%%%%%%%%%%

\begin{abstract}
  In this paper, we extend the structure-preserving interpolatory model 
  reduction framework, originally developed for linear systems, to structured  
  bilinear control systems.
  Specifically, we give explicit construction formulae for the model reduction 
  bases to satisfy different types of interpolation conditions.
  First, we  establish the analysis  for transfer function interpolation for
  single-input single-output structured bilinear systems.
  Then, we extend these results to the case of multi-input multi-output 
  structured bilinear systems by matrix interpolation.
  The effectiveness of our structure-preserving approach is illustrated by means 
  of various numerical examples.
 
  \vspace{1em}
  \noindent\textbf{Keywords:}
    model reduction, bilinear systems, structure-pre\-ser\-ving 
    approximation, structured interpolation.
    
  \vspace{1em}
  \noindent\textbf{AMS subject classifications:}
    30E05, 34K17, 65D05, 93C10, 93A15, 93C35.
\end{abstract}

%%%%%%%%%%%%%%%%%%%%%%%%%%%%%%%%%%%%%%%%%%%%%%%%%%%%%%%%%%%%%%%%%%%%%%%%%%%%%%%%
% INTRODUCTION.                                                                %
%%%%%%%%%%%%%%%%%%%%%%%%%%%%%%%%%%%%%%%%%%%%%%%%%%%%%%%%%%%%%%%%%%%%%%%%%%%%%%%%

\section{Introduction}%
\label{sec:intro}

The modeling of various real-world applications and processes results in 
dynamical control systems usually including nonlinearities.
Since linear approximations are very often  incapable of capturing all the 
features of nonlinear systems, they are an insufficient description for use in
optimization and controller design.
A special class of nonlinear systems are bilinear control systems, which contain
the multiplication of control and state variables, i.e., they are linear in
state and control separately, but not together~\cite{Moh70}.
In the last decades, the class of bilinear systems became an essential tool
in systems theory.
They naturally appear in the modeling process of many physical phenomena, e.g.,
in the modeling of population, economical, thermal and mechanical 
dynamics~\cite{Moh70, Moh73}, of electrical circuits~\cite{morAlbFB93}, of plasma
devices~\cite{Ou10, QiaZ14}, or of medical 
processes~\cite{SapSH19}.
Bilinear systems can also result  from approximation of general nonlinear 
systems employing the Carleman linearization process~\cite{Car32, KowS91}.
Moreover, bilinear systems are nowadays often used in the parameter control
of partial differential equations (PDEs)~\cite{Kha03, KorK18}.
Looking back to the linear case, bilinear systems can be used as a generalizing 
framework in the modeling of linear stochastic~\cite{morBenD11} and 
parameter-varying systems~\cite{morBenB11, morBruB15, morBenCS19}, allowing the 
application of established system-theoretic tools such as model order reduction 
for those system classes.

In this paper, we focus on structured bilinear systems.
Those structures arise from the underlying physical phenomena. For example,  in 
case of bilinear mechanical systems, one has the bilinear control system 
defined by
\begin{align} \label{eqn:bsosys}
  \begin{aligned}
    M\ddot{q}(t) + D\dot{q}(t) + Kq(t) & = \sum\limits_{j = 1}^{m}
      N_{\rm{p},j} q(t) u_{j}(t) + \sum\limits_{j = 1}^{m} 
      N_{\rm{v},j} \dot{q}(t) u_{j}(t) + B_{\rm{u}} u(t),\\
    y(t) & = C_{\rm{p}}q(t) + C_{\rm{v}}\dot{q}(t),
  \end{aligned}
\end{align}
with $M, D, K, N_{\rm{p},j}, N_{\rm{v},j} \in \R^{n \times n}$ for all
$j = 1, \ldots, m$, $B_{\rm{u}} \in \R^{n \times m}$ and $C_{\rm{p}}, C_{\rm{v}}
\in \R^{p \times n}$. 
In~\cref{eqn:bsosys}, $q(t) \in \R^{n}$, $u(t) \in \R^m$, and $y(t) \in \R^p$ 
denote, respectively, the states (degrees of freedom), inputs (forcing terms), 
and the outputs (quantities of interest) of the underlying dynamical system.
Due to the usual demand for increasing accuracy in applications, the number of
differential equations $n$, describing the dynamics of systems
as in~\cref{eqn:bsosys} quickly increases,  resulting in a high demand on
computational resources such as time and memory.
One remedy is model order reduction: a new, \emph{reduced}, system is created, 
consisting of a significantly smaller number of differential equations than 
needed to define the original one while still accurately approximating the 
input-to-output behavior.
Then one can use this lower-order approximation as a surrogate model for faster 
simulations or the design of controllers.
The classical (unstructured) bilinear first-order systems are described by the 
state-space representation
\begin{align} \label{eqn:bsys}
  \begin{aligned}
    E \dot{x}(t) & = A x(t) + \sum\limits_{j = 1}^{m}N_{j}x(t)u_{j}(t)
     + B u(t),\\
    y(t) & = C x(t),
  \end{aligned}
\end{align}
with $E, A, N_{j} \in \R^{n \times n}$ for all $j = 1, \ldots, m$,
$B \in \R^{n \times m}$ and $C \in \R^{p \times n}$.
There are different methodologies for model reduction of~\cref{eqn:bsys}, e.g.,
the bilinear balanced truncation method~\cite{morHsuDC83, morAlbFB93,
morBenD11}, different types of moment matching approaches for the underlying
multi-variate transfer functions in the frequency domain~\cite{morBaiS06,
morConI07, morFenB07a, morBreD10, morAntBG20}, the interpolation of
complete Volterra series~\cite{morZhaL02, morBenB12b, morFlaG15} or even the
construction of reduced-order bilinear systems from frequency data with the 
bilinear Loewner framework~\cite{morAntGI16, morGosPBetal19}.

While it is possible to rewrite~\cref{eqn:bsosys} into a
classical bilinear system~\cref{eqn:bsys}, the original structure is
completely lost, which can lead to undesirable results in terms of accuracy,
stability or physical interpretation. Moreover, some other structured bilinear 
systems, such as those with internal delays (see \Cref{sec:time-delay-models}), 
cannot be represented in the form~\cref{eqn:bsys}.
Therefore, here we develop a structure-preserving model reduction
approach for different system structures involving bilinear terms.
Following~\cite{morBeaG09}, which studied structured linear dynamical 
systems, our goal is to generalize the structured
interpolation approach to a general set of multivariate transfer functions
associated with different structured bilinear control systems to preserve the 
system structure in the reduced-order model.
The question we aim to answer is how  we can construct an interpolatory
reduced-order model of, e.g.,~\cref{eqn:bsosys}, that has the same structure.
Towards this goal, we develop a structure-preserving interpolation framework for 
this special class of nonlinear systems, namely the structured bilinear 
control systems; thus extending the theoretical analysis and computational 
framework developed by~\cite{morBeaG09} for linear systems to bilinear control 
systems.

In \Cref{sec:basics}, we review the theory for classical first-order bilinear 
systems and motivate the more general structure, we will consider, via two 
examples.
\Cref{sec:siso} gives subspace construction formulae for interpolatory 
model reduction bases in the case of single-input single-output (SISO) systems
and illustrates the effectiveness of the approach employing two numerical 
examples.
The developed theory is then  extended further in \Cref{sec:mimo}
to the multi-input multi-output (MIMO) case by matrix interpolation.
\Cref{sec:conclusions} concludes the paper.

%%%%%%%%%%%%%%%%%%%%%%%%%%%%%%%%%%%%%%%%%%%%%%%%%%%%%%%%%%%%%%%%%%%%%%%%%%%%%%%%
% SETUP.                                                                       %
%%%%%%%%%%%%%%%%%%%%%%%%%%%%%%%%%%%%%%%%%%%%%%%%%%%%%%%%%%%%%%%%%%%%%%%%%%%%%%%%

\section{Structured bilinear systems}%
\label{sec:basics}

In this section, we present the basic properties of the structured bilinear
systems considered in this paper. To clarify the presentation, we first
revisit the unstructured (classical) bilinear  control systems as given
in~\cref{eqn:bsys} and then generalize these concepts to the structured case.

%%%%%%%%%%%%%%%%%%%%%%%%%%%%%%%%%%%%%%%%%%%%%%%%%%%%%%%%%%%%%%%%%%%%%%%%%%%%%%%%

\subsection{Revisiting the classical first-order bilinear systems}%
\label{sec:classical}

Given the unstructured bilinear system~\cref{eqn:bsys}, define $N = 
\begin{bmatrix} N_{1} & \ldots & N_{m} \end{bmatrix}$ and let $I_{m^{k}}$ be 
the identity matrix of dimension $m^{k}$.
Assuming for simplicity $E$ to be invertible, the initial condition $x(0) = 0$, 
and some additional mild conditions, the output of~\cref{eqn:bsys} can be 
expressed in terms of a Volterra series~\cite{Rug81}, i.e.,
\begin{align*}
  y(t) & = \sum\limits_{k = 1}^{\infty} \int\limits_{0}^{t}
    \int\limits_{0}^{t_{1}} \ldots \int\limits_{0}^{t_{k-1}}
    g_{k}(t_{1}, \ldots, t_{k}) 
    \left( u(t - \sum\limits_{i = 1}^{j}t_{i}) \otimes \cdots \otimes
    u(t - t_{1}) \right) \mathrm{d}t_{k} \cdots \mathrm{d}t_{1},
\end{align*}
where $g_{k}$, for $k \geq 1$, is the $k$-th regular Volterra kernel given by
\begin{align} \label{eqn:voltime}
  \begin{aligned}
    g_{k}(t_{1}, \ldots, t_{k}) & = Ce^{E^{-1}At_{k}} \left(
      \prod\limits_{j = 1}^{k-1} (I_{m^{j-1}} \otimes E^{-1}N)
      (I_{m^{j}} \otimes e^{E^{-1}At_{k-j}})
      \right)\\
    & \quad{}\times{} (I_{m^{k-1}} \otimes E^{-1}B).
  \end{aligned}
\end{align}
Using the multivariate Laplace transform~\cite{Rug81}, the regular Volterra
kernels~\cref{eqn:voltime} yield a representation of~\cref{eqn:bsys}
in the frequency domain by the so-called multivariate regular transfer functions
\begin{align} \label{eqn:btf}
  \begin{aligned}
    G_{k}(s_{1},\ldots,s_{k}) & = C(s_{k}E - A)^{-1} \left(
      \prod\limits_{j = 1}^{k-1} (I_{m^{j-1}} \otimes N)
      (I_{m^{j}} \otimes (s_{k-j}E - A)^{-1})
      \right)\\
    & \quad{}\times{} (I_{m^{k-1}} \otimes B),
  \end{aligned}
\end{align}
with $s_{1}, \ldots, s_{k} \in \C$.
This compact expression is actually the collection of the different
combinations of the bilinear matrices, i.e., we can write~\cref{eqn:btf} as
\begin{align} \label{eqn:nokron}
  \begin{aligned}
    G_{k}(s_{1},\ldots,s_{k}) & = [C(s_{k}E - A)^{-1}N_{1} \cdots 
      N_{1}(s_{1}E - A)^{-1}B,\\
    & ~~~~~ C(s_{k}E - A)^{-1}N_{1} \cdots N_{2}(s_{1}E - A)^{-1}B,\\
    & ~~~~~ \ldots \\
    & ~~~~~ C(s_{k}E - A)^{-1}N_{1} \cdots N_{m}(s_{1}E - A)^{-1}B,\\
    & ~~~~~ \ldots \\
    & ~~~~~ C(s_{k}E - A)^{-1}N_{m} \cdots N_{m}(s_{1}E - A)^{-1}B].
  \end{aligned}
\end{align}
For SISO systems,~\cref{eqn:btf} simplifies to
\begin{align*}
  G_{k}(s_{1},\ldots,s_{k}) & = C(s_{k}E - A)^{-1} \left( 
    \prod\limits_{j = 1}^{k-1} N (s_{k-j}E - A)^{-1} \right) B.
\end{align*}
As stated in \Cref{sec:intro}, for the unstructured bilinear system
case~\cref{eqn:bsys}, there are already different model reduction techniques.
For the structured bilinear systems we consider here, we will concentrate on 
interpolatory methods.

Note that the assumption of $E$ being invertible is only made for ease of 
presentation.
The interpolation theory and interpolatory properties of the reduced-order model 
developed in the following sections hold for the general situation, yet the 
final construction of the reduced-order model might need some additional 
treatment as in the linear and unstructured bilinear cases; see,
e.g.,~\cite{morGugSW13, morBenG16, morAhmBG17}.

%%%%%%%%%%%%%%%%%%%%%%%%%%%%%%%%%%%%%%%%%%%%%%%%%%%%%%%%%%%%%%%%%%%%%%%%%%%%%%%%

\subsection{Moving from classical to structured bilinear systems}%
\label{sec:structuredproblems}

For the transition from unstructured to structured bilinear systems, we start by 
recalling  the case of linear systems.
The classical (unstructured) linear dynamical systems are described, in 
state-space, by
\begin{align*}
  E \dot{x}(t) & = A x(t) + B u(t),\\
  y(t) & = C x(t),
\end{align*}
with $E, A \in \R^{n \times n}$, $B \in \R^{n \times m}$ and
$C \in \R^{p \times n}$.
Assuming the initial condition $Ex(0) = 0$, the Laplace transform maps this 
problem to the frequency domain:
\begin{align} \label{eqn:freqsys}
  \begin{aligned}
    (sE - A) X(s) & = B U(s), \\
    Y(s) & = C X(s),
  \end{aligned}
\end{align}
where $X(s), U(s)$ and $Y(s)$ denote the Laplace transforms of the 
time-dependent functions $x(t)$, $u(t)$, and $y(t)$.
Inspired by much richer structured systems than~\cref{eqn:freqsys} 
appearing in the linear case such as those describing the dynamic response of a 
viscoelastic body, \cite{morBeaG09}~introduced a more general  system of 
equations in the frequency domain, given by
\begin{align} \label{eqn:structfreqsys}
  \begin{aligned}
    \cK(s) X(s) & = \cB(s) U(s), \\
    Y(s) & = \cC(s) X(s),
  \end{aligned}
\end{align}
with matrix-valued functions $\cK\colon \C \rightarrow \C^{n \times n}$, 
$\cB\colon \C \rightarrow \C^{n \times m}$ and
$\cC\colon \C \rightarrow \C^{p \times n}$.
Note that~\cref{eqn:structfreqsys} 
contains~\cref{eqn:freqsys} as a special case. 
Assuming the problem to be regular, i.e., there exists an $s \in \C$ for which
the matrix functions are defined and $\cK(s)$ is full-rank, the 
problem~\cref{eqn:structfreqsys} leads to the general formulation of structured
transfer functions of linear systems
\begin{align} \label{eqn:lintf}
  G_{\lin}(s) & = \cC(s) \cK(s)^{-1} \cB(s),
\end{align}
describing the input-to-output behavior in the frequency domain.

Inspired by~\cref{eqn:lintf} and the structure of the examples in 
\Cref{sec:2ndorderexm,sec:time-delay-models},
we consider here a more general, structured formulation of the regular
 subsystem transfer functions corresponding to structured bilinear systems, namely 
\begin{align} \label{eqn:mimotf}
  \begin{aligned}
    G_{k}(s_{1}, \ldots, s_{k}) & = \cC(s_{k})\cK(s_{k})^{-1} \left(
      \prod\limits_{j = 1}^{k-1} \big( I_{m^{j-1}} \otimes \cN(s_{k-j}) \big)
      \big( I_{m^{j}} \otimes \cK(s_{k-j})^{-1} \big)
      \right)\\
    & \quad{}\times{} (I_{m^{k-1}} \otimes \cB(s_{1})),
  \end{aligned}
\end{align}
for $k \geq 1$ and where $\cN(s) = \begin{bmatrix} \cN_{1}(s) & \ldots &
\cN_{m}(s) \end{bmatrix}$ with the matrix functions
$\cC\colon \C \rightarrow \C^{p \times n}$,
$\cK\colon \C \rightarrow \C^{n \times n}$,
$\cB\colon \C \rightarrow \C^{n \times m}$,
$\cN_{j}\colon \C \rightarrow \C^{n \times n}$
for $j = 1, \ldots, m$.
This general formulation includes transfer functions of classical bilinear
systems~\cref{eqn:btf} since we can choose
\begin{align*}
  \begin{aligned}
    \cC(s) & = C, & \cK(s) & = sE - A, & \cN(s) & = N, & \cB(s) & = B.
  \end{aligned}
\end{align*}
\Cref{sec:2ndorderexm,sec:time-delay-models} give two examples of structured 
system classes that can be formulated in this general setting.

For the construction of \emph{structured reduced-order bilinear models}, we will 
use the projection approach, i.e., we will construct two model reduction bases 
$W, V \in \C^{n \times r}$ such that the reduced-order bilinear system 
quantities will be computed by
\begin{align} \label{eqn:proj}
  \begin{aligned}
    \hcC(s) & = \cC(s) V,
      & \hcK(s) & = W^{\herm} \cK(s) V,
      & \hcB(s) & = W^{\herm} \cB(s), && \text{and} &
      & \hcN_{j}(s) & = W^{\herm} \cN_{j}(s) V,
  \end{aligned}
\end{align}
for $j = 1, \ldots, m$.
The structured reduced-order bilinear control system $\hG$ is then given by the 
underlying reduced-order matrices from~\cref{eqn:proj} and with the 
corresponding structured multivariate subsystem transfer functions
\begin{align*}
  \hG_{k}(s_{1}, \ldots, s_{k}) & = \hcC(s_{k})\hcK(s_{k})^{-1} \left(
    \prod\limits_{j = 1}^{k-1} \big( I_{m^{j-1}} \otimes \hcN(s_{k-j}) \big)
    \big( I_{m^{j}} \otimes \hcK(s_{k-j})^{-1} \big)
    \right) (I_{m^{k-1}} \otimes \hcB(s_{1})),
\end{align*}
for $k \geq 1$.
  
%%%%%%%%%%%%%%%%%%%%%%%%%%%%%%%%%%%%%%%%%%%%%%%%%%%%%%%%%%%%%%%%%%%%%%%%%%%%%%%%

\subsubsection{Bilinear second-order systems}
\label{sec:2ndorderexm}

We revisit the example of second-order bilinear 
systems~\cref{eqn:bsosys} given in \Cref{sec:intro}.
First, we note that~\cref{eqn:bsosys} can be rewritten in the first-order 
(unstructured) form~\cref{eqn:bsys} by introducing the new state vector
$x(t) = [q^{\trans} (t), \dot{q}^{\trans}]^{\trans}$ such that we obtain
\begin{align} \label{eqn:bsosyscomp}
  \begin{aligned}
    \underbrace{\begin{bmatrix} J & 0 \\ 0 & M \end{bmatrix}}_{E}
      \dot{x}(t) & = \underbrace{\begin{bmatrix} 0 & J \\ -K & -D 
      \end{bmatrix}}_{A} x(t) + \sum\limits_{j = 1}^{m}\underbrace{
      \begin{bmatrix} 0 & 0 \\ N_{\rm{p},j} & N_{\rm{v},j} \end{bmatrix}}_{N_{j}}
      x(t)u_{j}(t) + \underbrace{\begin{bmatrix} 0 \\ B_{\rm{u}} 
      \end{bmatrix}}_{B}
      u(t),\\
    y(t) & = \underbrace{\begin{bmatrix} C_{\rm{p}} & C_{\rm{v}} 
      \end{bmatrix}}_{C}x(t),
  \end{aligned}
\end{align}
for any invertible matrix $J \in \R^{n \times n}$.
For this first-order companion realization~\cref{eqn:bsosyscomp}, we know the
frequency domain representation to be given by the multivariate regular
transfer functions~\cref{eqn:btf}.
If we now plug in the structured matrices from~\cref{eqn:bsosyscomp}, we can
make use of those special block structures.
In general, we obtain
\begin{align*}
  (sE - A)^{-1} & = \begin{bmatrix} sJ & -J \\ K & sM + E \end{bmatrix}^{-1}\\
  & = \begin{bmatrix} \frac{1}{s}J^{-1} - \frac{1}{s} (s^2 M + sD + K)^{-1}
    KJ^{-1} & (s^2 M + sD + K)^{-1} \\ -(s^2 M + sD + K)^{-1}KJ^{-1} & 
    s(s^2 M + sD + K)^{-1}\end{bmatrix}
\end{align*}
for the frequency-dependent center terms and, therefore,
\begin{align*}
  N_{j}(sE - A)^{-1}B & = \begin{bmatrix} 0 \\ (N_{\rm{p},j} + sN_{\rm{v},j})
    (s^{2}M + sD + K)^{-1}B_{\rm{u}} \end{bmatrix}.
\end{align*}
Using this, we obtain for the first part of the $k$-th regular transfer function
\begin{align*}
  & \left(\prod\limits_{j = 1}^{k-1} \big( I_{m^{j-1}} \otimes N \big)
    \big( I_{m^{j}} \otimes (s_{k-j}E - A)^{-1} \big)
    \right) (I_{m^{k-1}} \otimes B)\\
  & = \begin{bmatrix} 0 \\ \left(\prod\limits_{j = 1}^{k-1}
    \big( I_{m^{j-1}} \otimes (N_{\rm{p}} + s_{k-j}N_{\rm{v}}) \big)
    \big( I_{m^{j}} \otimes (s_{k-j}^{2}M + s_{k-j}D + K)^{-1} \big)
    \right) (I_{m^{k-1}} \otimes B_{\rm{u}})
    \end{bmatrix},
\end{align*}
where we used the notion $N_{\rm{p}} = \begin{bmatrix} N_{\rm{p},1} & \ldots & 
N_{\rm{p},m} \end{bmatrix}$ and $N_{\rm{v}} = \begin{bmatrix} N_{\rm{v},1} & 
\ldots & N_{\rm{v},m} \end{bmatrix}$.
Multiplication with the remaining terms yields the regular transfer
functions of~\cref{eqn:bsosys} to be written in the form
\begin{align}\label{eqn:bsotf}
  \begin{aligned}
    G_{k}(s_{1}, \ldots, s_{k}) & = (C_{\rm{p}} + s_{k}C_{\rm{v}})
      (s_{k}^{2}M + s_{k}D + K)^{-1} \left(\prod\limits_{j = 1}^{k-1}
      \big( I_{m^{j-1}} \otimes (N_{\rm{p}} + s_{k-j}N_{\rm{v}}) \big) \right. \\
    & \quad{}\times{} \left. \vphantom{\prod\limits_{j = 1}^{k-1}}
      \big( I_{m^{j}} \otimes (s_{k-j}^{2}M + s_{k-j}D + K)^{-1} \big)
      \right) (I_{m^{k-1}} \otimes B_{\rm{u}}).
  \end{aligned}
\end{align}
Having the general formulation of regular transfer functions~\cref{eqn:mimotf}
in mind, we see that we can rewrite \cref{eqn:bsotf} in the structured bilinear 
form \cref{eqn:mimotf} by setting 
\begin{align*}
  \begin{aligned}
    \cC(s) & = C_{\rm{p}} + sC_{\rm{v}}, & 
    \cK(s) & = s^{2}M + sD + K, & 
    \cN(s)  & =N_{\rm{p}} + sN_{\rm{v}}, & 
    \cB(s) & = B_{\rm{u}}.
  \end{aligned}
\end{align*}
Now assume that we construct model reduction bases $W$ and $V$ and compute
the reduced order model by projection as in~\cref{eqn:proj}.
This leads to the reduced-order bilinear system
\begin{align} \label{eqn:redsecorder}
  \begin{aligned}
    \hcC(s) & = C_{\rm{p}} V + s (C_{\rm{v}} V), \\
    \hcK(s) & = s^{2} (W^\herm M  V)+ s (W^\herm D V)+ (W^\herm K V), \\
    \hcN(s) &  = (W^\herm N_{\rm{p}} (I_{m} \otimes V))
      + s(W^\herm N_{\rm{v}} (I_{m} \otimes V)), \\
    \hcB(s) & = W^\herm B_{\rm{u}}.
  \end{aligned}
\end{align}
Note that the reduced-order bilinear system in~\cref{eqn:redsecorder} has the 
same structure as the original one and can be viewed as a reduced second-order
bilinear system, where the full-order matrices in~\cref{eqn:bsosys} are simply
replaced by the reduced analogues from~\cref{eqn:redsecorder}.

%%%%%%%%%%%%%%%%%%%%%%%%%%%%%%%%%%%%%%%%%%%%%%%%%%%%%%%%%%%%%%%%%%%%%%%%%%%%%%%%

\subsubsection{Bilinear time-delay systems}
\label{sec:time-delay-models}

Another structured bilinear control system example is the case of  bilinear systems with an internal time-delay, i.e.,
\begin{align*}
  E\dot{x}(t) & = Ax(t) + A_{\rm{d}}x(t - \tau) +
    \sum\limits_{j = 1}^{m}N_{j}x(t)u_{j}(t) + Bu(t),\\
  y(t) & = Cx(t),
\end{align*}
for a delay $0 \leq \tau \in \R$, which has regular transfer
functions of the form
\begin{align} \label{eqn:tdbtf}
  \begin{aligned}
    G_{k}(s_{1}, \ldots, s_{k}) & = C(s_{k}E - A - e^{-s_{k}\tau}A_{\rm{d}})^{-1}
      \left(\prod\limits_{j = 1}^{k-1} \big( I_{m^{j-1}} \otimes N \big)
      \right. \\
    & \quad{}\times{} \left. \vphantom{\prod\limits_{j = 1}^{k-1}}
      \big( I_{m^{j}} \otimes (s_{k-j}E - A - e^{-s_{k-j}\tau}A_{\rm{d}})^{-1} \big)
      \right) (I_{m^{k-1}} \otimes B);
  \end{aligned}
\end{align}
see~\cite{morGosPBetal19}.
As in the case of the bilinear second-order systems, we see 
that~\cref{eqn:tdbtf}
can be written in the setting of~\cref{eqn:mimotf} using
\begin{align*}
  \begin{aligned}
    \cC(s) & = C, & \cK(s) & = sE - A - e^{-s\tau}A_{\rm{d}}, & \cN(s) & =
      N, && \text{and} & \cB(s) & = B.
  \end{aligned}
\end{align*}
As in \Cref{sec:2ndorderexm}, once the model reduction bases 
$W$ and $V$ are constructed, 
the resulting reduced-order
model retains the  delay structure of the original system
as it is given by
\begin{align*}
  \begin{aligned}
    \hcC(s) & = C V, & 
    \hcK(s) & = s(W^{\herm} E V) - (W^{\herm} A V) - e^{-s\tau}(W^{\herm}
      A_{\rm{d}} V), &  \\
    \hcN(s) &  = W^\herm N (I_{m} \otimes V), & \hcB(s) & = W^\herm B.
  \end{aligned}
\end{align*}
In \Cref{sec:siso,sec:mimo}, we will show how to construct the model reduction 
bases $W$ and $V$ such that the structured reduced-order bilinear control system 
provides interpolation of the full-order subsystems.

%%%%%%%%%%%%%%%%%%%%%%%%%%%%%%%%%%%%%%%%%%%%%%%%%%%%%%%%%%%%%%%%%%%%%%%%%%%%%%%%
% SISO.                                                                        %
%%%%%%%%%%%%%%%%%%%%%%%%%%%%%%%%%%%%%%%%%%%%%%%%%%%%%%%%%%%%%%%%%%%%%%%%%%%%%%%%

\section{Interpolation of single-input single-output systems}%
\label{sec:siso}

In this section, we assume the SISO system case, i.e., $m = p = 1$.
Therefore, the bilinear part consists of, at most, one term $\cN = \cN_{1}$ and
the matrix functionals $\cC$ and $\cB$ map frequency points only onto row
and column vectors, respectively.
In this setting, the regular transfer functions drastically simplify 
since~\cref{eqn:mimotf} can now be written as
\begin{align} \label{eqn:sisotf}
  G_{k}(s_{1}, \ldots, s_{k}) & = \cC(s_{k}) \cK(s_{k})^{-1} \left(
    \prod\limits_{j = 1}^{k-1} \cN(s_{k-j}) \cK(s_{k-j})^{-1} \right) \cB(s_{1}),
\end{align}
for $k \geq 1$.
In the remainder of this section, we develop the theory for structure-preserving 
interpolation (both the case of simple and high-order (Hermite) interpolation) 
and then present numerical examples to illustrate the analysis.

%%%%%%%%%%%%%%%%%%%%%%%%%%%%%%%%%%%%%%%%%%%%%%%%%%%%%%%%%%%%%%%%%%%%%%%%%%%%%%%%

\subsection{Structured transfer function interpolation}

We want to construct the model reduction bases $W$ and $V$ and the corresponding 
reduced struc\-tured-bilinear system via projection as in~\cref{eqn:proj} 
such that its leading regular transfer functions interpolate those of the 
original one; i.e., $G_{k}(\sigma_{1}, \ldots, \sigma_{k}) =
\hG_{k}(\sigma_{1}, \ldots, \sigma_{k})$, where $\sigma_{1}, \ldots, \sigma_{k} 
\in \C$ are some selected interpolation points.

The following two theorems answer the question of how the model reduction bases
$V$ and $W$ can be constructed independent of each other.
In other words, the interpolation conditions are satisfied only via $V$ or $W$, 
no matter how the respective other matrix is chosen.  
First, we consider the model reduction basis  $V$.

\begin{theorem}[Interpolation via $V$]%
  \label{thm:sisov}
  Let $G$ be a bilinear SISO system, described by~\cref{eqn:sisotf},
  and $\hG$ the reduced-order bilinear SISO system constructed
  by~\cref{eqn:proj}. Let  $\sigma_{1},\ldots,\sigma_{k} \in 
  \C$  be interpolation points for which the matrix functions $\cC(s)$, 
  $\cK(s)^{-1}$, $\cN(s)$ and $\cB(s)$ are defined and $\hcK(s)$ is full-rank.
  Construct $V$ using
  \begin{align*}
    v_{1} & = \cK(\sigma_{1})^{-1}\cB(\sigma_{1}),\\
    v_{j} & = \cK(\sigma_{j})^{-1}\cN(\sigma_{j-1})v_{j-1}, & 2 \leq j \leq k,\\
    \mspan(V) & \supseteq \mspan\left([v_{1}, \ldots, v_{k}]\right),
  \end{align*}
  and let $W$ be an arbitrary full-rank truncation matrix of appropriate 
  dimension.
  Then the subsystem transfer functions of $\hG$ interpolate those of 
  $G$ in the following way:
  \begin{align*}
    \begin{aligned}
      G_{1}(\sigma_{1}) & =  \hG_{1}(\sigma_{1}), &
      G_{2}(\sigma_{1},\sigma_{2}) & = \hG_{2}(\sigma_{1}, \sigma_{2}), &
      & \ldots, & 
      G_{k}(\sigma_{1}, \ldots, \sigma_{k}) &
        = \hG_{k}(\sigma_{1}, \ldots, \sigma_{k}).
    \end{aligned}
  \end{align*}
\end{theorem}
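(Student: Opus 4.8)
The plan is to mimic the classical interpolation argument from the linear and unstructured bilinear settings, adapted to the structured transfer function formula~\cref{eqn:sisotf}. The central device is the \emph{projector} $\mathbb{P}(s) = V \hcK(s)^{-1} W^{\herm} \cK(s)$. One checks directly that, for any $s$ among the interpolation points where the matrix functions are defined and $\hcK(s)$ is full-rank, $\mathbb{P}(s)$ is a projector onto $\mspan(V)$ along $\ker\!\big(W^{\herm}\cK(s)\big)$: indeed $\mathbb{P}(s)^2 = \mathbb{P}(s)$ and $\mathbb{P}(s) V = V$ since $W^{\herm}\cK(s) V = \hcK(s)$. The key identity is that for any vector $x$ with $x \in \mspan(V)$ we have $\cK(s)^{-1}W^{\herm}\cdots$ — more precisely, the ``right'' factor of the reduced transfer function can be related to the full one whenever the relevant full-order vectors lie in $\mspan(V)$.

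First I would introduce the full-order and reduced-order ``intermediate'' vectors
\begin{align*}
  v_{1} & = \cK(\sigma_{1})^{-1}\cB(\sigma_{1}), &
  v_{j} & = \cK(\sigma_{j})^{-1}\cN(\sigma_{j-1})v_{j-1} \quad (2 \leq j \leq k),\\
  \hat{v}_{1} & = \hcK(\sigma_{1})^{-1}\hcB(\sigma_{1}), &
  \hat{v}_{j} & = \hcK(\sigma_{j})^{-1}\hcN(\sigma_{j-1})\hat{v}_{j-1} \quad (2 \leq j \leq k),
\end{align*}
so that $\hG_{k}(\sigma_{1},\ldots,\sigma_{k}) = \hcC(\sigma_{k})\hat{v}_{k} = \cC(\sigma_{k}) V \hat{v}_{k}$ and $G_{k}(\sigma_{1},\ldots,\sigma_{k}) = \cC(\sigma_{k}) v_{k}$. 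It therefore suffices to prove $V\hat{v}_{j} = v_{j}$ for all $1 \leq j \leq k$; then the last transfer function interpolates, and since the construction of $v_{1},\ldots,v_{j}$ only uses $\sigma_{1},\ldots,\sigma_{j}$, the same argument run with the truncated point list gives $G_{j}(\sigma_{1},\ldots,\sigma_{j}) = \hG_{j}(\sigma_{1},\ldots,\sigma_{j})$ for every $j \le k$ simultaneously.

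The heart of the proof is the induction $V\hat{v}_{j} = v_{j}$. For the base case, $V\hat{v}_{1} = V\hcK(\sigma_{1})^{-1}W^{\herm}\cB(\sigma_{1}) = \mathbb{P}(\sigma_{1})\,\cK(\sigma_{1})^{-1}\cB(\sigma_{1}) = \mathbb{P}(\sigma_{1}) v_{1} = v_{1}$, using $v_{1} \in \mspan([v_{1},\ldots,v_{k}]) \subseteq \mspan(V)$ and $\mathbb{P}(\sigma_1)|_{\mspan V} = \mathrm{id}$. For the step, assume $V\hat{v}_{j-1} = v_{j-1}$; then
\begin{align*}
  V\hat{v}_{j}
    & = V\hcK(\sigma_{j})^{-1}\hcN(\sigma_{j-1})\hat{v}_{j-1}
      = V\hcK(\sigma_{j})^{-1}W^{\herm}\cN(\sigma_{j-1})V\hat{v}_{j-1}\\
    & = \mathbb{P}(\sigma_{j})\,\cK(\sigma_{j})^{-1}\cN(\sigma_{j-1})v_{j-1}
      = \mathbb{P}(\sigma_{j}) v_{j} = v_{j},
\end{align*}
again because $v_{j} \in \mspan(V)$. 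Substituting $\hcC(\sigma_k) = \cC(\sigma_k) V$ closes the argument. The main obstacle — really the only subtle point — is bookkeeping the well-definedness hypotheses: one must be careful that $\hcK(\sigma_{j})$ is invertible at each interpolation point (this is exactly the full-rank assumption in the statement, and it is what makes $\mathbb{P}(\sigma_j)$ and the $\hat v_j$ meaningful), and that the $\supseteq$ in $\mspan(V) \supseteq \mspan([v_1,\ldots,v_k])$ — rather than equality — does not break the identity $\mathbb{P}(\sigma_j)v_j = v_j$, which it does not, since $\mathbb{P}(\sigma_j)$ fixes all of $\mspan(V)$ pointwise regardless of how much larger $\mspan(V)$ is. No Kronecker-product machinery is needed here because the SISO structure has collapsed the tensor factors; the MIMO case in \Cref{sec:mimo} is where that reappears.
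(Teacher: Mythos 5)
Your proof is correct and follows essentially the same route as the paper: the oblique projector $V\hcK(\sigma_j)^{-1}W^{\herm}\cK(\sigma_j)$ fixing $\mspan(V)$, combined with the recursive containment $v_j\in\mspan(V)$, is exactly the paper's $P_{v_j}$ argument, merely repackaged as the cleaner invariant $V\hat{v}_j=v_j$ rather than an induction phrased directly on the transfer-function index.
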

\begin{proof}
  First, we note that the constructed vectors are given by
  \begin{align*}
    v_{1} & = \cK(\sigma_{1})^{-1}\cB(\sigma_{1}),\\
    v_{2} & = \cK(\sigma_{2})^{-1}\cN(\sigma_{1})\cK(\sigma_{1})^{-1}
      \cB(\sigma_{1}),\\ 
    & \,\,\,\vdots \\
    v_{k} & = \cK(\sigma_{k})^{-1}\cN(\sigma_{k-1})\cK(\sigma_{k-1})^{-1} \cdots 
      \cK(\sigma_{1})\cB(\sigma_{1}),
  \end{align*}
  and that by construction all those vectors are contained in $\mspan(V)$.
  Therefore, for the first transfer function we obtain
  \begin{align*}
    \hG_{1}(\sigma_{1}) & = \hcC(\sigma_{1}) \hcK(\sigma_{1})^{-1}
      \hcB(\sigma_{1})\\
    & = \cC(\sigma_{1})V(W^{\herm}\cK(\sigma_{1})V)^{-1}W^{\herm}
      \cB(\sigma_{1})\\
    & = \cC(\sigma_{1}) \underbrace{V (W^{\herm} \cK(\sigma_{1}) V)^{-1}
      W^{\herm} \cK(\sigma_{1})}_{\phantom{\, P_{v_{1}}} =:\, P_{v_{1}}} 
      \cK(\sigma_{1})^{-1} \cB(\sigma_{1})\\
    & = \cC(\sigma_{1}) \cK(\sigma_{1})^{-1} \cB(\sigma_{1})\\
    & = G_{1}(\sigma_{1}),
  \end{align*}
  where we used the fact that $P_{v_{1}}$ is an oblique projector onto
  $\mspan(V)$, i.e., $z = P_{v_{1}} z$ holds for all $z \in \mspan(V)$,
  and that $\cK(\sigma_{1})^{-1} \cB(\sigma_{1}) = v_{1} \in \mspan(V)$.
  Considering the second transfer function, we get
  \begin{align*}
    \hG_{2}(\sigma_{1}, \sigma_{2}) & = \hcC(\sigma_{2}) \hcK(\sigma_{2})^{-1}
      \hcN(\sigma_{1}) \hcK(\sigma_{1})^{-1} \hcB(\sigma_{1})\\
    & = \cC(\sigma_{2}) V (W^{\herm} \cK(\sigma_{2}) V)^{-1} W^{\herm}
      \cN(\sigma_{1}) V (W^{\herm} \cK(\sigma_{1}) V)^{-1} W^{\herm}
      \cB(\sigma_{1})\\
    & = \cC(\sigma_{2}) V (W^{\herm} \cK(\sigma_{2}) V)^{-1} W^{\herm}
      \cN(\sigma_{1}) \cK(\sigma_{1})^{-1} \cB(\sigma_{1})\\
    & = \cC(\sigma_{2}) \underbrace{V (W^{\herm} \cK(\sigma_{2}) V)^{-1}
      W^{\herm} \cK(\sigma_{2})}_{\phantom{\, P_{v_{2}}}=:\, P_{v_{2}}} 
      \cK(\sigma_{2})^{-1} \cN(\sigma_{1}) \cK(\sigma_{1})^{-1} \cB(\sigma_{1})\\
    & = \cC(\sigma_{2})\cK(\sigma_{2})^{-1}\cN(\sigma_{1})\cK(\sigma_{1})^{-1}
      \cB(\sigma_{1})\\
    & = G_{2}(\sigma_{1}, \sigma_{2}),
  \end{align*}
  using the same arguments as for the first transfer function and additionally
  the construction of $v_{2}$ and the oblique projector $P_{v_{2}}$.
  Continuing with this argumentation, the desired result follows by
  induction over the transfer function index $k$.
\end{proof}

The proof of \Cref{thm:sisov} shows that the recursive construction
of the truncation matrix is necessary for the interpolation of higher-order
transfer functions.
Also, it should be noted that $W$ was an arbitrary full-rank truncation matrix of
suitable dimensions but with no additional constraints for the interpolation
of~\cref{eqn:sisotf}.
\Cref{thm:sisow} is the counterpart to \Cref{thm:sisov}
by  only giving constraints for the left model reduction basis $W$, while $V$ is 
now allowed to be arbitrary.

\begin{theorem}[Interpolation via $W$]%
  \label{thm:sisow}
  Let $G$, $\hG$, and the interpolation points $\sigma_{1},\ldots,\sigma_{k} \in 
  \C$ be as in \Cref{thm:sisov}.
  Construct $W$ using
  \begin{align*}
    w_{1} & = \cK(\sigma_{k})^{-\herm}\cC(\sigma_{k})^{\herm},\\
    w_{j} & = \cK(\sigma_{k-j+1})^{-\herm}\cN(\sigma_{k-j+1})^{\herm}w_{j-1},
      & 2 \leq j \leq k,\\
    \mathrm{span}(W) & \supseteq \mathrm{span}\left([w_{1}, \ldots, 
      w_{k}]\right),
  \end{align*}
  and let $V$ be an arbitrary full-rank truncation matrix of appropriate 
  dimension.
  Then the transfer functions of $\hG$ interpolate the transfer functions of 
  $G$ in the following way:
  \begin{align*}
    \begin{aligned}
      G_{1}(\sigma_{k}) & =  \widehat{G}_{1}(\sigma_{k}), &
      G_{2}(\sigma_{k-1},\sigma_{k}) &
        = \widehat{G}_{2}(\sigma_{k-1}, \sigma_{k}), &
      & \ldots, &
      G_{k}(\sigma_{1}, \ldots, \sigma_{k}) &
        = \widehat{G}_{k}(\sigma_{1}, \ldots, \sigma_{k}).
    \end{aligned}
  \end{align*}
\end{theorem}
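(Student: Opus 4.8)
The plan is to mirror the proof of \Cref{thm:sisov}, but working from the output side inward rather than from the input side. The key observation is that the adjoint construction of $W$ is designed so that the transpose-conjugate vectors $w_j$ span exactly the directions needed to reproduce the left-hand factors of the subsystem transfer functions \cref{eqn:sisotf}. Specifically, I would first unwind the recursion to see that
\begin{align*}
  w_{1}^{\herm} & = \cC(\sigma_{k}) \cK(\sigma_{k})^{-1},\\
  w_{2}^{\herm} & = \cC(\sigma_{k}) \cK(\sigma_{k})^{-1} \cN(\sigma_{k-1})
    \cK(\sigma_{k-1})^{-1},\\
  & \,\,\,\vdots\\
  w_{j}^{\herm} & = \cC(\sigma_{k}) \cK(\sigma_{k})^{-1} \cN(\sigma_{k-1})
    \cK(\sigma_{k-1})^{-1} \cdots \cN(\sigma_{k-j+1}) \cK(\sigma_{k-j+1})^{-1},
\end{align*}
so that $w_{k}^{\herm} = \cC(\sigma_{k})\cK(\sigma_{k})^{-1} \cdots \cN(\sigma_{1})\cK(\sigma_{1})^{-1}$, and all these row vectors lie in $\mspan(W)^{\herm}$ by the containment hypothesis on $W$.

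Next I would introduce the \emph{left} oblique projector dual to the $P_{v_j}$ of the previous proof, namely
\begin{align*}
  Q_{w_{j}} := \cK(\sigma_{k-j+1}) V \big( W^{\herm} \cK(\sigma_{k-j+1}) V
    \big)^{-1} W^{\herm},
\end{align*}
which satisfies $z^{\herm} = z^{\herm} Q_{w_{j}}$ for every $z$ with $z \in \mspan(W)$ (equivalently, for every row vector $z^{\herm}$ in the row space of $W^{\herm}$), since $Q_{w_j}$ acts as the identity from the left on $\mspan(W)$. This is the exact transpose analogue of the fact used before, and it is full-rank because $\hcK(\sigma_{k-j+1}) = W^{\herm}\cK(\sigma_{k-j+1}) V$ is assumed invertible at each interpolation point.

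With these pieces in place, the computation is the term-by-term analogue of \Cref{thm:sisov}. For $\hG_{1}(\sigma_{k})$, I expand $\hcC(\sigma_k)\hcK(\sigma_k)^{-1}\hcB(\sigma_k) = \cC(\sigma_k) V (W^{\herm}\cK(\sigma_k)V)^{-1} W^{\herm}\cB(\sigma_k)$, recognize $\cC(\sigma_k) V(W^{\herm}\cK(\sigma_k)V)^{-1}W^{\herm} = w_1^{\herm} Q_{w_1}$ after inserting $\cK(\sigma_k)^{-1}\cK(\sigma_k)$, and use $w_1^{\herm} Q_{w_1} = w_1^{\herm}$ together with $w_1^{\herm} = \cC(\sigma_k)\cK(\sigma_k)^{-1}$. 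For $\hG_{2}(\sigma_{k-1},\sigma_k)$ the same manipulation collapses the outer $\cC V(\cdot)^{-1}W^{\herm}$ to $w_1^{\herm}$, then absorbs $W^{\herm}\cN(\sigma_{k-1})V(W^{\herm}\cK(\sigma_{k-1})V)^{-1}W^{\herm}$ via $Q_{w_2}$ acting on the row vector $w_1^{\herm}\cN(\sigma_{k-1})$; note that the matched index $\sigma_{k-1}$ is exactly what makes $w_2^{\herm} = w_1^{\herm}\cN(\sigma_{k-1})\cK(\sigma_{k-1})^{-1}$ appear. The general case then follows by induction on the subsystem index $k$, peeling off one factor $\cN(\sigma_{k-j+1})\cK(\sigma_{k-j+1})^{-1}$ at a time from the left. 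The main subtlety to get right — the only place the argument could go wrong — is the bookkeeping of the index reversal: the recursion for $W$ consumes the points in the order $\sigma_k, \sigma_{k-1}, \ldots, \sigma_1$, which is why the interpolated subsystem $\hG_j$ is matched at $(\sigma_{k-j+1}, \ldots, \sigma_k)$ rather than at $(\sigma_1,\ldots,\sigma_j)$; I would state the induction hypothesis carefully in terms of $w_j^{\herm}$ above to make this transparent, but otherwise no new ideas beyond \Cref{thm:sisov} are needed.
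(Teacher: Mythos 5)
Your proposal is correct and is exactly the argument the paper intends: the paper's own proof of this theorem simply states that it proceeds "analogously" to \Cref{thm:sisov} with the interpolation-point order reversed and Hermitian conjugates taken, and your explicit unwinding of $w_j^{\herm}$ together with the left oblique projector $Q_{w_j}$ is the faithful dualization of the $P_{v_j}$ computation there. The index bookkeeping you flag (matching $\hG_j$ at $(\sigma_{k-j+1},\ldots,\sigma_k)$) is handled correctly.
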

\begin{proof}
  The proof of this theorem follows analogous to the proof of \Cref{thm:sisov}.
  We only need to note that the left projection space $\mspan(W)$ involves
  the $\cC(s)$ matrix, which takes always the last argument of $G_{k}$ into
  account.
  Therefore, the order of the interpolation points is reversed and the
  recursion formula follows the transfer function order going from left
  to right.
  The rest follows as in the proof of \Cref{thm:sisov} by taking the
  Hermitian conjugate of the matrix functions for the construction.
\end{proof}

The main difference between \Cref{thm:sisov} and
\Cref{thm:sisow} is the order in which the interpolation points
have to be used.
Switching between the two projection schemes leads to a reverse ordering of
the interpolation points for the intermediate transfer functions.

The last theorem of this section states now the combination of 
\Cref{thm:sisov} and \Cref{thm:sisow} by two-sided projection.

\begin{theorem}[Interpolation by two-sided projection]%
  \label{thm:sisovw}
  Let $G$ and $\hG$ be as in \Cref{thm:sisov} and let $V$ be
  constructed as in \Cref{thm:sisov} for a given set of interpolation
  points $\sigma_{1}, \ldots, \sigma_{k} \in \C$ and $W$ as in
  \Cref{thm:sisow} for another set of interpolation points
  $\varsigma_{1}, \ldots, \varsigma_{\theta} \in \C$, for which the matrix 
  functions $\cC(s)$, $\cK(s)^{-1}$, $\cN(s)$ and $\cB(s)$ are defined and 
  $\hcK(s)$ is full-rank.
  Then the transfer functions of $\hG$ interpolate the transfer functions of
  $G$ in the following way:
  \begin{align} \label{prevresult}
    \begin{aligned}
      G_{1}(\sigma_{1}) & = \hG_{1}(\sigma_{1}), &
      G_{2}(\sigma_{1},\sigma_{2}) & = \hG_{2}(\sigma_{1}, \sigma_{2}), &
      & \ldots, & 
      G_{k}(\sigma_{1}, \ldots, \sigma_{k}) &
        = \hG_{k}(\sigma_{1}, \ldots, \sigma_{k}),\\
      G_{1}(\varsigma_{\theta}) & =  \widehat{G}_{1}(\varsigma_{\theta}), &
      G_{2}(\varsigma_{\theta-1},\varsigma_{\theta}) &
        = \widehat{G}_{2}(\varsigma_{\theta-1}, \varsigma_{\theta}), &
      & \ldots, &
      G_{\theta}(\varsigma_{1}, \ldots, \varsigma_{\theta}) &
        = \widehat{G}_{\theta}(\varsigma_{1}, \ldots, \varsigma_{\theta}),
    \end{aligned}
  \end{align}
  and additionally,
  \begin{align} \label{newresult}
    G_{q + \eta}(\sigma_{1}, \ldots, \sigma_{q},
      \varsigma_{\theta-\eta+1}, \ldots, \varsigma_{\theta}) & =
      \hG_{q + \eta}(\sigma_{1}, \ldots, \sigma_{q},
      \varsigma_{\theta-\eta+1}, \ldots, \varsigma_{\theta}),
  \end{align}
  for $1 \leq q \leq k$ and  $1 \leq \eta \leq \theta$.
\end{theorem}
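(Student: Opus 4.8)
The plan is to treat the two parts of the statement separately. The relations in \eqref{prevresult} are immediate: the first family is \Cref{thm:sisov} applied to the interpolation points $\sigma_{1}, \ldots, \sigma_{k}$ with $W$ playing the role of the unconstrained basis, and the second family is \Cref{thm:sisow} applied to the interpolation points $\varsigma_{1}, \ldots, \varsigma_{\theta}$ with $V$ unconstrained. Thus all the work lies in establishing the mixed-point identity \eqref{newresult}.

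Fix $1 \leq q \leq k$ and $1 \leq \eta \leq \theta$, put $n = q + \eta$, and write the evaluation tuple as $(\mu_{1}, \ldots, \mu_{n})$ with $\mu_{i} = \sigma_{i}$ for $1 \leq i \leq q$ and $\mu_{q+l} = \varsigma_{\theta - \eta + l}$ for $1 \leq l \leq \eta$. Expanding \eqref{eqn:sisotf},
\begin{align*}
  G_{n}(\mu_{1}, \ldots, \mu_{n}) & = \cC(\mu_{n}) \cK(\mu_{n})^{-1}
    \cN(\mu_{n-1}) \cK(\mu_{n-1})^{-1} \cdots \cN(\mu_{1}) \cK(\mu_{1})^{-1}
    \cB(\mu_{1}),
\end{align*}
and the idea is to cut this product at the single factor $\cN(\mu_{q})$. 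Unrolling the recursion defining $V$ shows that the trailing block $\cK(\mu_{q})^{-1}\cN(\mu_{q-1}) \cdots \cN(\mu_{1})\cK(\mu_{1})^{-1}\cB(\mu_{1})$ coincides with $v_{q}$, because $\mu_{1}, \ldots, \mu_{q}$ equal $\sigma_{1}, \ldots, \sigma_{q}$; unrolling the recursion defining $W$ and taking Hermitian conjugates shows that the leading block $\cC(\mu_{n})\cK(\mu_{n})^{-1}\cN(\mu_{n-1}) \cdots \cN(\mu_{q+1})\cK(\mu_{q+1})^{-1}$ equals $w_{\eta}^{\herm}$, because $\mu_{q+1}, \ldots, \mu_{n}$ equal $\varsigma_{\theta-\eta+1}, \ldots, \varsigma_{\theta}$ and the $W$-recursion consumes precisely these points in the reversed order prescribed by \Cref{thm:sisow}. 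Hence $G_{n}(\mu_{1}, \ldots, \mu_{n}) = w_{\eta}^{\herm} \cN(\sigma_{q}) v_{q}$. Running the identical bookkeeping on the reduced system yields $\hG_{n}(\mu_{1}, \ldots, \mu_{n}) = \widehat{w}_{\eta}^{\herm} \hcN(\sigma_{q}) \widehat{v}_{q}$, where $\widehat{v}_{j}$ and $\widehat{w}_{j}$ are generated by the same two recursions with $\cK, \cN, \cB, \cC$ replaced by $\hcK, \hcN, \hcB, \hcC$.

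It then remains to bridge the reduced and the full intermediate vectors, which is exactly the mechanism already exposed inside the proofs of \Cref{thm:sisov} and \Cref{thm:sisow}: using the oblique projector $P_{v_{j}} = V(W^{\herm}\cK(\sigma_{j})V)^{-1}W^{\herm}\cK(\sigma_{j})$ onto $\mspan(V)$ together with $v_{j} \in \mspan(V)$, an induction on $j$ gives $V\widehat{v}_{j} = v_{j}$ for $1 \leq j \leq q$; the Hermitian-transposed version of the same argument, using the corresponding oblique projector onto $\mspan(W)$ and $w_{j} \in \mspan(W)$, gives $W\widehat{w}_{j} = w_{j}$ for $1 \leq j \leq \eta$. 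Substituting $\hcN(\sigma_{q}) = W^{\herm}\cN(\sigma_{q})V$ into the factored form of $\hG_{n}$ and applying these identities,
\begin{align*}
  \hG_{n}(\mu_{1}, \ldots, \mu_{n}) & = \widehat{w}_{\eta}^{\herm} W^{\herm}
    \cN(\sigma_{q}) V \widehat{v}_{q}
    = (W\widehat{w}_{\eta})^{\herm} \cN(\sigma_{q}) (V\widehat{v}_{q})\\
  & = w_{\eta}^{\herm} \cN(\sigma_{q}) v_{q}
    = G_{n}(\mu_{1}, \ldots, \mu_{n}),
\end{align*}
which is \eqref{newresult}.

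I expect the only genuine obstacle to be the index bookkeeping in the factorization step: one must verify that the order reversal built into \Cref{thm:sisow} places $\varsigma_{\theta-\eta+1}, \ldots, \varsigma_{\theta}$ in exactly the slots $\mu_{q+1}, \ldots, \mu_{n}$, so that the leading block of $G_{n}$ collapses to $w_{\eta}^{\herm}$ and the trailing block to $v_{q}$ with no stray factors and with the cut occurring at the shared center term $\cN(\mu_{q}) = \cN(\sigma_{q})$. Once this split is correctly set up, the remainder is the projector argument borrowed verbatim from the two preceding proofs, together with the trivial rearrangement $\widehat{w}_{\eta}^{\herm}(W^{\herm}\cN(\sigma_{q})V)\widehat{v}_{q} = (W\widehat{w}_{\eta})^{\herm}\cN(\sigma_{q})(V\widehat{v}_{q})$.
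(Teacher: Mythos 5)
Your proposal is correct and follows essentially the same route as the paper: part \cref{prevresult} is delegated to \Cref{thm:sisov,thm:sisow}, and the mixed condition \cref{newresult} is obtained by splitting the product form of $G_{q+\eta}$ at the shared factor $\cN(\sigma_{q})$ and using the oblique-projector mechanism from the one-sided proofs on each side. The only (cosmetic) difference is that you make the intermediate identities $V\widehat{v}_{j}=v_{j}$ and $W\widehat{w}_{j}=w_{j}$ explicit and cut symmetrically at $\cN(\sigma_{q})$, whereas the paper absorbs $\cN(\sigma_{q})$ into the right block and performs the two replacements sequentially.
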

\begin{proof}
  Since the interpolation conditions in~\cref{prevresult} follow directly from
  \Cref{thm:sisov} and \Cref{thm:sisow}, we only need to prove~\cref{newresult}, 
  the mixed interpolation conditions.
  For $q$ and $\eta$ as described in the theorem, we obtain
  \begin{align*}
    & \hG_{q + \eta}(\sigma_{1}, \ldots, \sigma_{q},
      \varsigma_{\theta-\eta+1}, \ldots, \varsigma_{\theta})\\
    & = \hcC(\varsigma_{\theta})\hcK(\varsigma_{\theta})^{-1} \left(
      \prod\limits_{j = 1}^{\eta-1}\hcN(\varsigma_{\theta-j})
      \hcK(\varsigma_{\theta-j})^{-1} \right)
      \left(\prod\limits_{i = 0}^{q-1}\hcN(\sigma_{q-i})
      \hcK(\sigma_{q-i})^{-1} \right) \hcB(\sigma_{1})\\
    & = \hcC(\varsigma_{\theta})\hcK(\varsigma_{\theta})^{-1} \left(
      \prod\limits_{j = 1}^{\eta-1}\hcN(\varsigma_{\theta-j})
      \hcK(\varsigma_{\theta-j})^{-1} \right) W^{\herm}
      \underbrace{\left(\prod\limits_{i = 0}^{q-1}\cN(\sigma_{q-i})
      \cK(\sigma_{q-i})^{-1} \right) \cB(\sigma_{1})}_{
      \phantom{\, \mspan(V)} \in\, \mspan(V)}\\
    & = \underbrace{\cC(\varsigma_{\theta})\cK(\varsigma_{\theta})^{-1} \left(
      \prod\limits_{j = 1}^{\eta-1}\cN(\varsigma_{\theta-j})
      \cK(\varsigma_{\theta-j})^{-1} \right)}_{
      \phantom{\,h,~h^{\herm}\, \in\, \mspan(W)}
      =:\, h,~h^{\herm}\, \in\, \mspan(W)}
      \left(\prod\limits_{i = 0}^{q-1}\cN(\sigma_{q-i})
      \cK(\sigma_{q-i})^{-1} \right) \cB(\sigma_{1})\\
    & = G_{q+\eta}(\sigma_{1}, \ldots, \sigma_{q},
      \varsigma_{\theta-\eta+1}, \ldots, \varsigma_{\theta}),
  \end{align*}
  where we used the construction of $\mspan(V)$ in the third and of $\mspan(W)$ 
  in the fourth lines as denoted and following the  strategy in the proof of 
  \Cref{thm:sisov}.
\end{proof}

It is an important observation that we can interpolate higher-order transfer
functions by only evaluating lower ones for the construction of the model 
reduction bases.
Following \Cref{thm:sisovw}, we can in fact interpolate transfer
functions up to order $k + \theta$.
Also, we recognize that the two-sided projection-based interpolation is able
to match $k + \theta + k \cdot \theta$ interpolation conditions at the same
time.
Those results are similar to the unstructured systems case~\cite{morAntBG20}.
The special case of identical sets of interpolation points is discussed in
the following section regarding Hermite interpolation.

%%%%%%%%%%%%%%%%%%%%%%%%%%%%%%%%%%%%%%%%%%%%%%%%%%%%%%%%%%%%%%%%%%%%%%%%%%%%%%%%

\subsection{Hermite interpolation}

As in the linear case, we can use the projection framework to interpolate not
only the transfer functions but also their derivatives.
In the setting of the multivariate transfer function appearing in bilinear 
systems, this amounts to partial derivatives with respect to the different 
frequency arguments.
For ease of notation, we introduce an abbreviation for partial derivatives
\begin{align*}
  \partial_{s_{1}^{j_{1}} \cdots s_{k}^{j_{k}}} f(z_{1}, \ldots, z_{k}) & :=
    \frac{\partial^{j_{1} + \ldots + j_{k}} f}{\partial s_{1}^{j_{1}} \cdots
    \partial s_{k}^{j_{k}}} (z_{1}, \ldots, z_{k}),
\end{align*}
denoting the differentiation of an analytic function $f\colon \C^{k}
\rightarrow \C^{\ell}$ with respect to the variables $s_{1}, \ldots,
s_{k}$ and evaluated at $z_{1}, \ldots, z_{k} \in \C$.
Moreover, the Jacobian of $f$ is denoted by 
\begin{align*}
  \nabla f & = \begin{bmatrix} \partial_{s_{1}} f & \ldots & \partial_{s_{k}} f 
    \end{bmatrix}
\end{align*}
as the concatenation of all partial derivatives.

The following theorem states a Hermite interpolation result via $V$ only.

\begin{theorem}[Hermite interpolation via $V$]%
  \label{thm:sisovhermite}
  Let $G$ be a bilinear SISO system, described by~\cref{eqn:sisotf},
  and $\hG$ the reduced-order bilinear SISO system constructed
  by~\cref{eqn:proj}.
  Let $\sigma_{1},\ldots,\sigma_{k} \in \C$ be the interpolation points for which
  the matrix functions $\cC(s)$, $\cK(s)^{-1}$, $\cN(s)$ and $\cB(s)$ are 
  analytic and $\hcK(s)$ is full-rank.
  Construct $V$ using
  \begin{align*}
    v_{1, j_{1}} & = \partial_{s^{j_{1}}} (\cK^{-1} \cB) (\sigma_{1}),
      & j_{1} & = 0,\ldots,\ell_{1},\\
    v_{2, j_{2}} & = \partial_{s^{j_{2}}} \cK^{-1} (\sigma_{2})
      \partial_{s^{\ell_{1}}} (\cN \cK^{-1} \cB) (\sigma_{1}),
      & j_{2} & = 0,\ldots,\ell_{2},\\
    & \,\,\,\vdots\\
    v_{k, j_{k}} & = \partial_{s^{j_{k}}} \cK^{-1} (\sigma_{k})
      \left( \prod\limits_{j = 1}^{k-2} \partial_{s^{\ell_{k-j}}} (\cN \cK^{-1})
      (\sigma_{k-j}) \right) \partial_{s^{\ell_{1}}} (\cN \cK^{-1} \cB)
      (\sigma_{1}),
      & j_{k} & = 0,\ldots,\ell_{k},\\
    \mspan(V) & \supseteq \mspan([v_{1,0}, \ldots, v_{k, \ell_{k}}]),
  \end{align*}
  and let $W$ be an arbitrary full-rank truncation matrix of appropriate 
  dimension.
  Then the transfer functions of $\hG$ interpolate the transfer functions of 
  $G$ in the following way:
  \begin{align*}
    \partial_{s_{1}^{j_{1}}} G_{1} (\sigma_{1})
      & = \partial_{s_{1}^{j_{1}}} \hG_{1} (\sigma_{1}),
      & j_{1} & = 0,\ldots,\ell_{1},\\
    \partial_{s_{1}^{\ell_{1}}s_{2}^{j_{2}}} G_{2} (\sigma_{1}, \sigma_{2})
      & = \partial_{s_{1}^{\ell_{1}}s_{2}^{j_{2}}} \hG_{2}
      (\sigma_{1}, \sigma_{2}),
      & j_{2} & = 0,\ldots,\ell_{2},\\
    & \,\,\,\vdots\\
    \partial_{s_{1}^{\ell_{1}} \cdots s_{k-1}^{\ell_{k-1}}s_{k}^{j_{k}}}
      G_{k} (\sigma_{1}, \ldots, \sigma_{k})
      & = \partial_{s_{1}^{\ell_{1}} \cdots s_{k-1}^{\ell_{k-1}}s_{k}^{j_{k}}}
      \hG_{k} (\sigma_{1}, \ldots, \sigma_{k}),
      & j_{k} & = 0,\ldots,\ell_{k}.
  \end{align*}
\end{theorem}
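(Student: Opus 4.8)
The plan is to mimic the proof of \Cref{thm:sisov}, replacing each evaluation of a matrix function at an interpolation point by the appropriate partial derivative, and replacing the algebraic identity ``$P_{v} z = z$ for $z \in \mspan(V)$'' by a Leibniz-type identity for the derivatives of the projected resolvent. The first thing I would do is record, for fixed interpolation points $\sigma_{1},\ldots,\sigma_{k}$ and multi-index $(j_{1},\ldots,j_{k})$ with $j_{i} \leq \ell_{i}$, a closed formula for the mixed partial derivative $\partial_{s_{1}^{j_{1}} \cdots s_{k}^{j_{k}}} G_{k}(\sigma_{1},\ldots,\sigma_{k})$ obtained from \cref{eqn:sisotf} by the product rule. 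Because the $k$ frequency arguments appear in disjoint factors of the product in \cref{eqn:sisotf}, the mixed derivative factorizes: differentiating $j_{i}$ times in $s_{i}$ only hits the block $\cN(s_{i})\cK(s_{i})^{-1}$ (or $\cC(s_{k})\cK(s_{k})^{-1}$, resp.\ $\cK(s_{1})^{-1}\cB(s_{1})$) and contributes $\partial_{s^{j_{i}}}\big(\cN\cK^{-1}\big)(\sigma_{i})$. So there is no cross-term between different $s_{i}$'s; each argument is differentiated independently. This is the structural observation that makes the theorem work, and I would state it as the first lemma-like step.

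The second step is the key subspace claim: for every choice of $j_{1} \le \ell_{1},\ldots,j_{k}\le\ell_{k}$, the vector
\begin{align*}
  \partial_{s^{j_{k}}}\cK^{-1}(\sigma_{k})\Big(\prod_{j=1}^{k-2}\partial_{s^{\ell_{k-j}}}(\cN\cK^{-1})(\sigma_{k-j})\Big)\partial_{s^{\ell_{1}}}(\cN\cK^{-1}\cB)(\sigma_{1})
\end{align*}
lies in $\mspan(V)$ --- but note this is exactly $v_{k,j_{k}}$ for $j_{k}\le\ell_{k}$, and more generally, after differentiating the last factor fewer than $\ell_{i}$ times in the $i$-th slot, one still only produces linear combinations of the listed generators $v_{i,j_{i}}$. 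The subtlety, and the reason the construction uses $\partial_{s^{\ell_{i}}}$ (the \emph{top} derivative) inside the recursion while allowing $j_{i} = 0,\ldots,\ell_{i}$ only for the outermost factor, is that for the $i$-th transfer function we only ask to match derivatives up to order $\ell_{i}$ in the \emph{last} argument $s_{i}$, having already fixed the lower arguments at their top derivatives $\ell_{1},\ldots,\ell_{i-1}$. I would verify by downward induction on the ``position'' that $\partial_{s^{j}}\big(\cN\cK^{-1}\big)(\sigma_{i})\cdot(\text{something in }\mspan(V))$ stays in $\mspan(V)$ whenever $j \leq \ell_{i}$, using that $\cK^{-1}\cN$-type products of the generators are precisely what the recursion builds.

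The third step is the projector identity. As in \Cref{thm:sisov}, write $P_{v}(s) = V(W^{\herm}\cK(s)V)^{-1}W^{\herm}\cK(s)$, so that $\hcK(s)^{-1}W^{\herm} = V(W^{\herm}\cK(s)V)^{-1}W^{\herm} = P_{v}(s)\cK(s)^{-1}$, and $P_{v}(s)z = z$ for $z\in\mspan(V)$. In the Hermite case I need the analytic fact that differentiating this identity: since $P_{v}(s)$ is a matrix function that is the identity on the \emph{fixed} subspace $\mspan(V)$, we have $\big(\partial_{s^{j}}(P_{v}\,\xi)\big)(\sigma) = \partial_{s^{j}}\xi(\sigma)$ for any analytic $\C^{n}$-valued $\xi$ with $\partial_{s^{i}}\xi(\sigma)\in\mspan(V)$ for all $i\leq j$ --- this is just linearity of differentiation applied componentwise, plus $P_{v}(s)$ being constant \emph{in its action on $\mspan(V)$}. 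Combining: expand $\partial_{\cdots s_{k}^{j_{k}}}\hG_{k}$ by the product rule as in step 1, use $\hcN = W^{\herm}\cN V$, $\hcB = W^{\herm}\cB$, $\hcK^{-1}W^{\herm} = P_{v}\cK^{-1}$ to peel off the projectors one factor at a time from right to left, each time invoking step 2 to certify the relevant partial-derivative vector lies in $\mspan(V)$ and hence is fixed by $P_{v}$; what remains is exactly $\partial_{\cdots s_{k}^{j_{k}}}G_{k}(\sigma_{1},\ldots,\sigma_{k})$. The induction on $k$ then closes the argument, with the base case $k=1$ being the linear structured Hermite result of \cite{morBeaG09}.

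\textbf{Main obstacle.} The routine part is the product-rule bookkeeping; the genuine difficulty is step 2 --- tracking exactly which partial-derivative vectors land in $\mspan(V)$. The recursion in the hypothesis mixes \emph{fixed} top-order derivatives $\partial_{s^{\ell_{k-j}}}$ in the inner slots with a \emph{free} index $j_{k}\le\ell_{k}$ only in the leading slot, and one must check that when the product rule is applied to $\hG_{k}$ at the derivative order $(\ell_{1},\ldots,\ell_{k-1},j_{k})$, every intermediate vector produced (e.g.\ from differentiating a middle factor $j'<\ell_{k-j}$ times while another absorbs the remaining derivatives) is still a linear combination of the generators $\{v_{i,j_{i}}\}$. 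Making this precise --- essentially, that $\mspan(V)$ is closed under ``apply $\partial_{s^{j}}(\cN\cK^{-1})(\sigma_{i})$ for $j\le\ell_{i}$ to things already in the span, provided the earlier slots were differentiated to order $\le\ell$'' --- is where the proof must be careful, and I would isolate it as an auxiliary lemma proved by induction on the number of factors, exactly paralleling how \Cref{thm:sisovhermite}'s construction is itself defined recursively.
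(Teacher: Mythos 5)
Your plan is correct and follows essentially the same route as the paper's proof: factorize the mixed partial derivative via the product rule (no cross-terms between the different $s_{i}$, since each argument sits in a disjoint factor), observe that every intermediate vector produced is one of the listed generators $v_{i,j}$, and peel off the oblique projectors from right to left using the identity $\partial_{s^{j}}\bigl(P_{v}\,\xi\bigr)(\sigma)=\partial_{s^{j}}\xi(\sigma)$ whenever the lower-order derivatives of $\xi$ at $\sigma$ lie in $\mspan(V)$. The paper leaves that projector-derivative step implicit (deferring to the linear Hermite result of \cite{morBeaG09}) and only works out the case $k=2$, $j_{2}=0$ explicitly before invoking induction over $j_{k}$ and $k$, so your explicit isolation of the lemma is a presentational rather than a mathematical difference.
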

\begin{proof}
  \allowdisplaybreaks
  First, we note that the case $k = 1$ was already proven in~\cite{morBeaG09}
  and $\ell_{1} = \ldots = \ell_{k} = 0$ corresponds to \Cref{thm:sisov}.
  For $k = 2$, we start with $j_{2} = 0$ to investigate the partial derivative
  with respect to $s_{1}$ involving the bilinear term.
  Using the product rule, the partial derivative can be written as
  \begin{align*}
    \partial_{s^{\ell_{1}}} (\cN \cK^{-1} \cB) (\sigma_{1})
      & = \left( \sum\limits_{i_{1} = 0}^{\ell_{1}} c_{i_{1}}
      \partial_{s^{i_{1}}} \cN (\sigma_{1}) \right)
      \left( \sum\limits_{i_{2} = 0}^{\ell_{1}} c_{i_{2}}
      \partial_{s^{i_{2}}} (\cK^{-1} \cB) (\sigma_{1}) \right),
  \end{align*}
  for some appropriate constants $c_{i_{1}}, c_{i_{2}} \in \C$, and
  $i_{1}, i_{2} = 0, \ldots, \ell_{1}$.
  Now, we can show
  \begin{align*}
    \partial_{s_{1}^{\ell_{1}}} \hG_{2} (\sigma_{1},\sigma_{2})
      & = \hcC(\sigma_{2}) \hcK(\sigma_{2})^{-1}
      \partial_{s^{\ell_{1}}} (\hcN \hcK^{-1} \hcB) (\sigma_{1})\\
    & = \hcC(\sigma_{2}) \hcK(\sigma_{2})^{-1}
      \left( \sum\limits_{i_{1} = 0}^{\ell_{1}} c_{i_{1}}
      \partial_{s^{i_{1}}} \hcN (\sigma_{1}) \right)
      \left( \sum\limits_{i_{2} = 0}^{\ell_{1}} c_{i_{2}}
      \partial_{s^{i_{2}}} (\hcK^{-1} \hcB) (\sigma_{1}) \right)\\
    & = \hcC(\sigma_{2}) \hcK(\sigma_{2})^{-1} W^{\herm}
      \left( \sum\limits_{i_{1} = 0}^{\ell_{1}} c_{i_{1}}
      \partial_{s^{i_{1}}} \cN (\sigma_{1}) \right) \\
    & \quad{}\times{} V
      \left( \sum\limits_{i_{2} = 0}^{\ell_{1}} c_{i_{2}}
      \partial_{s^{i_{2}}} ((W^{\herm} \cK V)^{-1} W^{\herm} \cB) (\sigma_{1})
      \right)\\
    & = \hcC(\sigma_{2}) \hcK(\sigma_{2})^{-1} W^{\herm}
      \left( \sum\limits_{i_{1} = 0}^{\ell_{1}} c_{i_{1}}
      \partial_{s^{i_{1}}} \cN (\sigma_{1}) \right)
      \left( \sum\limits_{i_{2} = 0}^{\ell_{1}} c_{i_{2}}
      \partial_{s^{i_{2}}} (\cK^{-1} \cB) (\sigma_{1}) \right)\\
    & = \hcC(\sigma_{2}) \hcK(\sigma_{2})^{-1} W^{\herm}
      \partial_{s^{\ell_{1}}} (\cN \cK^{-1} \cB) (\sigma_{1})\\
    & = \cC(\sigma_{2}) \underbrace{V( W^{\herm} \cK(\sigma_{2}) V)^{-1}
      W^{\herm} \cK(\sigma_{2})}_{\phantom{\, P_{v_{2}}} =:\, P_{v_{2, 0}}} 
      \cK(\sigma_{2})^{-1} \partial_{s^{\ell_{1}}}
      (\cN \cK^{-1} \cB) (\sigma_{1})\\
    & = \cC(\sigma_{2}) \cK(\sigma_{2})^{-1}
      \partial_{s^{\ell_{1}}} (\cN \cK^{-1} \cB) (\sigma_{1})\\
    & = \partial_{s_{1}^{\ell_{1}}} G_{2} (\sigma_{1},\sigma_{2}),
  \end{align*}
  where we first used the construction of $v_{1, j_{1}}$ and then that of
  $v_{2, 0}$ with the projector $P_{v_{2, 0}}$ onto $\mspan(V)$.
  By induction over $j_{2}$, the results for the case $k = 2$ follow
  from~\cite{morBeaG09}; and by induction over $k$ and $j_{k}$, using the same
  arguments, the rest of the theorem follows.
\end{proof}

We note the difference between \Cref{thm:sisov} and
\Cref{thm:sisovhermite} in terms of the subspace construction.
While for the previous interpolation results, we are able to recursively
construct the next part of the model reduction subspace by using the previous 
one, this is not possible in \Cref{thm:sisovhermite} due to the frequency
dependence of the bilinear term $\cN(s)$.
Also, it follows that for the interpolation of the $\ell$-th derivative,
$\ell = \ell_{1} + \ldots + \ell_{k}$, of the $k$-th transfer function
$G_{k}$ in the interpolation points $\sigma_{1}, \ldots, \sigma_{k}$, the
minimal dimension of the projection space $\mspan(V)$ is given by $\ell + k$.

As before, we can consider the counterpart to \Cref{thm:sisovhermite}.
In addition to reversing the order of interpolation points,  the order
of the derivatives needs to be reverted as well for the Hermite interpolation.

\begin{theorem}[Hermite interpolation via $W$]%
  \label{thm:sisowhermite}
  Let $G$, $\hG$ the original and reduced-order models, respectively, and the 
  interpolation points $\sigma_{1},\ldots,\sigma_{k} \in \C$ be as in 
  \Cref{thm:sisovhermite}.
  Construct $W$ using
  \begin{align*}
    w_{1, j_{k}} & = \partial_{s^{j_{k}}} (\cK^{-\herm} \cC^{\herm})
      (\sigma_{k}),
      & j_{k} & = 0, \ldots, \ell_{k},\\
    w_{2, j_{k-1}} & = \partial_{s^{j_{k-1}}} (\cK^{-\herm} \cN^{\herm})
      (\sigma_{k-1}) \partial_{s^{\ell_{k}}} (\cK^{-\herm} \cC^{\herm})
      (\sigma_{k}),
      & j_{k-1} & = 0, \ldots, \ell_{k-1},\\
    & \,\,\,\vdots\\
    w_{k,j_{1}} & =  \partial_{s^{j_{1}}} (\cK^{-\herm} \cN^{\herm})
      (\sigma_{1}) \left( \prod\limits_{j = 2}^{k-1} \partial_{s^{\ell_{j}}}
      (\cK^{-\herm} \cN^{\herm}) (\sigma_{j}) \right) \partial_{s^{\ell_{k}}}\\
    & \quad{}\times{} (\cK^{-\herm} \cC^{\herm}) (\sigma_{k}),
      & j_{1} & = 0, \ldots, \ell_{1},\\
    \mathrm{span}(W) & \supseteq \mspan([w_{1,0}, \ldots, w_{k, \ell_{k}}]),
  \end{align*}
  and let $V$ be an arbitrary full-rank truncation matrix of appropriate 
  dimension.
  Then the transfer functions of $\hG$ interpolate the transfer functions of 
  $G$ in the following way
  \begin{align*}
    \partial_{s_{1}^{j_{k}}} G_{1} (\sigma_{k})
      & = \partial_{s_{1}^{j_{k}}} \hG_{1} (\sigma_{k}),
      & j_{k} & = 0, \ldots, \ell_{k},\\
    \partial_{s_{1}^{j_{k-1}} s_{2}^{\ell_{k}}} G_{2} (\sigma_{k-1}, \sigma_{k})
      & = \partial_{s_{1}^{j_{k-1}} s_{2}^{\ell_{k}}} \hG_{2}
      (\sigma_{k-1}, \sigma_{k}),
      & j_{k-1} & = 0, \ldots, \ell_{k-1},\\
    & \,\,\,\vdots\\
    \partial_{s_{1}^{j_{1}} s_{2}^{\ell_{2}} \cdots s_{k}^{\ell_{k}}}
      G_{k} (\sigma_{1}, \ldots, \sigma_{k})
    & = \partial_{s_{1}^{j_{1}} s_{2}^{\ell_{2}} \cdots s_{k}^{\ell_{k}}}
      \hG_{k} (\sigma_{1}, \ldots, \sigma_{k}),
      & j_{1} & = 0, \ldots, \ell_{1}.
  \end{align*}
\end{theorem}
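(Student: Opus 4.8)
The plan is to mirror the proof of \Cref{thm:sisowhermite}'s $V$-sided analogue, \Cref{thm:sisovhermite}, exactly as the authors did for the non-Hermite pair \Cref{thm:sisov}/\Cref{thm:sisow}: the left basis $W$ involves the output functional $\cC(s)$, which always carries the \emph{last} frequency argument $s_k$ of $G_k$, and the recursion that builds up $w_{1,\cdot}, \ldots, w_{k,\cdot}$ proceeds from the $\cC^{\herm}$ end toward the $\cB$ end. Consequently, relative to \Cref{thm:sisovhermite}, both the order of the interpolation points $\sigma_1, \ldots, \sigma_k$ and the order of the derivative multi-indices $\ell_1, \ldots, \ell_k$ get reversed, which is precisely what the stated interpolation conditions reflect ($s_k$ carries $\ell_k$ forced, $s_1$ carries the free index $j_1$).

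First I would record that the vectors $w_{i,j}$ built by the recursion satisfy, for $\mu = 0,\ldots,\ell_1$,
\begin{align*}
  \partial_{s^{\mu}} \big( \cK^{-\herm} \cN^{\herm} \big)(\sigma_1)
    \Big( \prod_{j=2}^{k-1} \partial_{s^{\ell_j}} \big( \cK^{-\herm} \cN^{\herm} \big)(\sigma_j) \Big)
    \partial_{s^{\ell_k}} \big( \cK^{-\herm} \cC^{\herm} \big)(\sigma_k)
  \in \mspan(W),
\end{align*}
so that, taking Hermitian conjugates, every row vector of the form $\partial_{s^{\ell_k}}(\cC \cK^{-1})(\sigma_k) \prod_{j=k-1}^{2}\partial_{s^{\ell_j}}(\cN\cK^{-1})(\sigma_j) \, \partial_{s^{\mu}} \cN(\sigma_1)$ has its Hermitian conjugate in $\mspan(W)$. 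Then I would introduce the oblique projector $P_{w} := \cK(\sigma_k)^{-1} W^{\herm}\cK(\sigma_k) V (W^{\herm}\cK(\sigma_k) V)^{-\herm}$—more precisely its transpose acting on the left—so that $u^{\herm} = u^{\herm} P_{w}$ holds whenever $u^{\herm} \in \mspan(W)$, exactly the left-sided analogue of the $P_{v}$ used in \Cref{thm:sisov}. As in \Cref{thm:sisovhermite}, I would apply the product rule to split $\partial_{s^{\ell_k}}(\cC\cK^{-1})(\sigma_k)$ and the $\partial_{s^{\ell_j}}(\cN\cK^{-1})(\sigma_j)$ factors into sums of products of individual derivatives $\partial_{s^{i}}\cC$, $\partial_{s^{i}}\cN$, $\partial_{s^{i}}\cK^{-1}$ with combinatorial constants, then use that $\hcC = \cC V$, $\hcN = W^{\herm}\cN V$, $\hcK^{-1} = (W^{\herm}\cK V)^{-1}$ to peel the reduced quantities off one factor at a time from the $\cC$-end, inserting $V V^{\herm}$-type identities that collapse via the membership statement above.

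The induction would be carried out over $k$ and, within each $k$, over the free derivative index $j_1$ (running $0$ to $\ell_1$), with the base case $k=1$—i.e. $\partial_{s^{j_k}} G_1(\sigma_k) = \partial_{s^{j_k}}\hG_1(\sigma_k)$—supplied by the linear structured result of~\cite{morBeaG09}, exactly as in \Cref{thm:sisovhermite}. The inductive step reduces $\partial_{s_1^{j_1} s_2^{\ell_2}\cdots s_k^{\ell_k}}\hG_k$ to $\cC(\sigma_k)\cK(\sigma_k)^{-1}$ times a vector that the $\mspan(W)$ construction forces to agree, after applying $P_w$ on the left, with the corresponding full-order quantity. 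The main obstacle—and the only place any real care is needed—is bookkeeping the product rule for the \emph{composite} factors $\partial_{s^{\ell_j}}(\cN \cK^{-1})(\sigma_j)$ at the intermediate indices $2 \le j \le k-1$: one must verify that the forced derivative order $\ell_j$ distributes correctly across $\cN$ and $\cK^{-1}$ and that the resulting sum still lands, term by term, in the span prescribed by the $w_{i,\cdot}$ recursion. Since the authors handled the structurally identical computation for $V$ in \Cref{thm:sisovhermite} and explicitly remarked that the $W$-case "follows analogous... by taking the Hermitian conjugate," I would be brief here, writing that the proof follows \Cref{thm:sisovhermite} mutatis mutandis: reverse the ordering of $\sigma_1,\ldots,\sigma_k$ and of $\ell_1,\ldots,\ell_k$, replace each matrix function by its Hermitian conjugate in the basis construction, and replace the right projector $P_{v}$ by the left projector onto $\mspan(W)$.
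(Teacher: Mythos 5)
Your proposal is correct and follows essentially the same route as the paper, whose own proof of this theorem is just the one-line remark that everything mirrors \Cref{thm:sisovhermite} after reversing the orders of both the interpolation points and the derivative indices, building on the Hermitian-conjugate mechanism from the proof of \Cref{thm:sisow}. Two notational slips are worth fixing: the displayed projector $\cK(\sigma_{k})^{-1}W^{\herm}\cK(\sigma_{k})V(W^{\herm}\cK(\sigma_{k})V)^{-\herm}$ is dimensionally inconsistent --- the left oblique projector you want is $\cK(\sigma_{k})V\left(W^{\herm}\cK(\sigma_{k})V\right)^{-1}W^{\herm}$, which satisfies $u^{\herm}P_{w}=u^{\herm}$ whenever $u\in\mspan(W)$ --- and the last factor in your membership claim should be $\partial_{s^{\mu}}(\cN\cK^{-1})(\sigma_{1})$ rather than $\partial_{s^{\mu}}\cN(\sigma_{1})$.
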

\begin{proof}
  Observing that the order of the derivatives changed in the same way as the
  interpolation points, the proof works analogously to the proof of
  \Cref{thm:sisovhermite} while building on the ideas from the proof of
  \Cref{thm:sisow}.
\end{proof}

An interesting fact in the structured linear case, as stated  
in~\cite{morBeaG09}, is the implicit matching of Hermite interpolation 
conditions without sampling the derivatives of the transfer function. Next, we 
extend  this construction to the structured bilinear case.
This result becomes a special case of \Cref{thm:sisovw} by using identical sets 
of interpolation points for $V$ and $W$.

\begin{theorem}[Implicit Hermite interpolation by two-sided projection]%
  \label{thm:sisovwhermite}
  Let $G$ and $\hG$ be as in \Cref{thm:sisovhermite}. Also let $V$ and $W$ 
  be constructed as in \Cref{thm:sisov,thm:sisow}, 
  respectively, for the same set of interpolation points $\sigma_{1}, \ldots, 
  \sigma_{k} \in \C$, for which the matrix functions $\cC(s)$, $\cK(s)^{-1}$, 
  $\cN(s)$ and $\cB(s)$ are analytic and $\hcK(s)$ is full-rank.
  Then the transfer functions of $\hG$ interpolate the transfer functions of $G$
  in the following way:
  \begin{align*}
    \begin{aligned}
      G_{1}(\sigma_{1}) & = \hG_{1}(\sigma_{1}), &
      & \ldots, & 
      G_{k-1}(\sigma_{1}, \ldots, \sigma_{k-1}) &
        = \hG_{k-1}(\sigma_{1}, \ldots, \sigma_{k-1}),\\
      G_{1}(\sigma_{k}) & =  \widehat{G}_{1}(\sigma_{k}), &
      & \ldots, &
      G_{k_-1}(\sigma_{2}, \ldots, \sigma_{k}) &
        = \widehat{G}_{k-1}(\sigma_{2}, \ldots, \sigma_{k}),
    \end{aligned}
  \end{align*}
  and additionally,
  \begin{align*}
    G_{k}(\sigma_{1}, \ldots, \sigma_{k})
      & = \hG_{k}(\sigma_{1}, \ldots, \sigma_{k}),\\
    \nabla G_{k}(\sigma_{1}, \ldots, \sigma_{k})
      & = \nabla \hG_{k}(\sigma_{1}, \ldots, \sigma_{k}),\\
    G_{q+\eta}(\sigma_{1}, \ldots, \sigma_{q},
      \sigma_{k-\eta+1}, \ldots, \sigma_{k}) & =
      \hG_{q+\eta}(\sigma_{1}, \ldots, \sigma_{q},
      \sigma_{k-\eta+1}, \ldots, \sigma_{k}),
  \end{align*}
  hold for $1 \leq q, \eta \leq k$.
\end{theorem}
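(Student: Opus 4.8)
The strategy is to split the asserted conditions into a part that follows at once from \Cref{thm:sisovw} and a single genuinely new, ``implicit Hermite'' condition. Specialising \Cref{thm:sisovw} to $\theta = k$ and $\varsigma_{i} = \sigma_{i}$, $i = 1, \ldots, k$, yields immediately the two displayed families of value-interpolation conditions (the condition $G_{k}(\sigma_{1}, \ldots, \sigma_{k}) = \hG_{k}(\sigma_{1}, \ldots, \sigma_{k})$ among them) together with all mixed conditions $G_{q + \eta}(\sigma_{1}, \ldots, \sigma_{q}, \sigma_{k - \eta + 1}, \ldots, \sigma_{k}) = \hG_{q + \eta}(\sigma_{1}, \ldots, \sigma_{q}, \sigma_{k - \eta + 1}, \ldots, \sigma_{k})$ for $1 \le q, \eta \le k$. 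Hence the only thing left to prove is the gradient identity $\nabla G_{k}(\sigma_{1}, \ldots, \sigma_{k}) = \nabla \hG_{k}(\sigma_{1}, \ldots, \sigma_{k})$, i.e., $\partial_{s_{\ell}} G_{k}(\sigma_{1}, \ldots, \sigma_{k}) = \partial_{s_{\ell}} \hG_{k}(\sigma_{1}, \ldots, \sigma_{k})$ for each $\ell \in \{1, \ldots, k\}$. The case $k = 1$ is the classical implicit Hermite result for structured linear systems in~\cite{morBeaG09}, so I would assume $k \ge 2$.

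Fix $\ell$. In the expanded product $G_{k}(s_{1}, \ldots, s_{k}) = \cC(s_{k}) \cK(s_{k})^{-1} \cN(s_{k-1}) \cK(s_{k-1})^{-1} \cdots \cN(s_{1}) \cK(s_{1})^{-1} \cB(s_{1})$ the variable $s_{\ell}$ occurs only in a single contiguous block of factors: $\cC(s_{\ell})\cK(s_{\ell})^{-1}$ when $\ell = k$, $\cN(s_{\ell})\cK(s_{\ell})^{-1}\cB(s_{\ell})$ when $\ell = 1$, and $\cN(s_{\ell})\cK(s_{\ell})^{-1}$ when $1 < \ell < k$. Writing $L_{\ell}$ and $R_{\ell}$ for the (frozen) factors to the left and right of this block $\Phi_{\ell}$, we have $\partial_{s_{\ell}} G_{k}(\sigma) = L_{\ell} \, \Phi_{\ell}'(\sigma_{\ell}) \, R_{\ell}$. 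Two structural facts carry the argument. First, the subproduct of $G_{k}(\sigma)$ consisting of $\cK(\sigma_{\ell})^{-1}$ and everything to its right equals the vector $v_{\ell}$ from \Cref{thm:sisov}, which lies in $\mspan(V)$; and, as in the proof of \Cref{thm:sisov}, multiplying the reduced counterpart of this subproduct by $V$ returns $v_{\ell}$. Second, dually, everything strictly to the left of this $\cK(\sigma_{\ell})^{-1}$ equals $w_{k - \ell + 1}^{\herm}\cK(\sigma_{\ell})$, where $w_{k - \ell + 1} \in \mspan(W)$ is the vector from \Cref{thm:sisow} (with $w_{1} = \cK(\sigma_{k})^{-\herm}\cC(\sigma_{k})^{\herm}$ when $\ell = k$); this is precisely where using the \emph{same} interpolation points for $V$ and $W$ matters, since it is what makes the left chain extended by $\cN(\sigma_{\ell})$ (or $\cC(\sigma_{k})$) a column of the recursively built $W$, via the reversed recursion $w_{k - \ell + 1} = \cK(\sigma_{\ell})^{-\herm}\cN(\sigma_{\ell})^{\herm}w_{k - \ell}$.

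With these two facts, I would differentiate $\hG_{k}$ in $s_{\ell}$ by applying the product rule to the reduced block (the analogue of $\Phi_{\ell}$ with $\hcN(s) = W^{\herm}\cN(s)V$, $\hcK(s)^{-1} = (W^{\herm}\cK(s)V)^{-1}$, $\hcC(s) = \cC(s)V$, $\hcB(s) = W^{\herm}\cB(s)$), and then collapse each resulting term at $\sigma_{\ell}$ with the two oblique projectors already used in the proofs of \Cref{thm:sisov,thm:sisow}: $V(W^{\herm}\cK(\sigma_{\ell})V)^{-1}W^{\herm}\cK(\sigma_{\ell})$, which fixes $v_{\ell} \in \mspan(V)$, and $\cK(\sigma_{\ell})V(W^{\herm}\cK(\sigma_{\ell})V)^{-1}W^{\herm}$, whose kernel is $\ker(W^{\herm})$, so that a row $w^{\herm}$ with $w \in \mspan(W)$ is left unchanged by it. The factor $\cK(s_{\ell})^{-1}$ sitting inside $\Phi_{\ell}$ is exactly what lets a factor $\cK'(\sigma_{\ell})$ produced by the product rule be absorbed to the right (into $v_{\ell}$) or to the left (against $w_{k - \ell + 1}^{\herm}$) while the left and right projections still close up. Term by term, this shows $\partial_{s_{\ell}} \hG_{k}(\sigma) = L_{\ell}\Phi_{\ell}'(\sigma_{\ell})R_{\ell} = \partial_{s_{\ell}} G_{k}(\sigma)$.

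I expect the main obstacle to be the case distinction $\ell = 1$ versus $1 < \ell < k$ versus $\ell = k$, and checking that the left- and right-hand projections close up simultaneously at the one shared frequency $\sigma_{\ell}$ --- which rests on the ``prefix'' identity that $\cC(\sigma_{k})$ resp.\ $L_{\ell}\cN(\sigma_{\ell})$ equals $w_{k - \ell + 1}^{\herm}\cK(\sigma_{\ell})$ with $w_{k - \ell + 1} \in \mspan(W)$, i.e., on the reversed recursion that defines $W$. Once that identity is available, each of the at most three terms of the reduced derivative reduces by the same product-rule-plus-projector manipulation already carried out in \Cref{thm:sisovhermite} and its $W$-counterpart, so nothing beyond careful index bookkeeping is needed.
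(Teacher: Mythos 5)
Your proposal is correct and follows essentially the same route as the paper: reduce everything except the gradient identity to \Cref{thm:sisovw} with identical point sets, then for each $\partial_{s_{\ell}}$ apply the product rule to the single $s_{\ell}$-dependent block and absorb the left prefix into $\mspan(W)$ (via the reversed recursion $w_{k-\ell+1}$) and the right suffix $v_{\ell}$ into $\mspan(V)$ using the two oblique projectors. The paper merely writes out the case $\ell = 1$ in full and declares the other two cases analogous, whereas you describe the mechanism uniformly; the substance is identical.
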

\begin{proof} \allowdisplaybreaks
  While most of the results directly follow from \Cref{thm:sisovw}
  by using identical sets of interpolation points for $V$ and $W$, the Hermite 
  interpolation of the complete Jacobian $\nabla G_{k}$ of the $k$-th 
  order transfer function is new.
  Since $k = 1$ (the linear subsystem) is covered by~\cite{morBeaG09}, we  
  assume $k > 1$.
  Therefore, and by the structure of the multivariate transfer functions $G_{k}$,
  three different cases can occur depending on the differentiation variable,
  i.e., we have
  \begin{align*}
    \begin{aligned}
      \partial_{s_{1}}: && \partial_{s} (\cN \cK^{-1} \cB)
        & = \left( \partial_{s} \cN \right) \cK^{-1} \cB +
        \cN \left( \partial_{s} (\cK^{-1} \cB) \right),\\
      \partial_{s_{j}}: && \partial_{s} (\cN \cK^{-1})
        & = \left( \partial_{s} \cN \right) \cK^{-1} +
        \cN \left( \partial_{s} \cK^{-1} \right), & \text{for}~1 < j < k,\\
      \partial_{s_{k}}: && \partial_{s} (\cC \cK^{-1})
        & = \left( \partial_{s} \cC \right) \cK^{-1} + 
        \cC \left( \partial_{s} \cK^{-1} \right),
    \end{aligned}
  \end{align*}
  as possible derivative terms.
  Since those three cases work analogously to each other, we restrict ourselves,
  for the sake of compactness, to the first one.
  First, we extend the expression of the partial derivative further into
  \begin{align*}
    \partial_{s} (\cN \cK^{-1} \cB) & = \left( \partial_{s} \cN \right)
      \cK^{-1} \cB + \cN \left(-\cK^{-1} \left( \partial_{s} \cK \right)
      \cK^{-1} \cB + \cK^{-1} \left( \partial_{s} \cB \right) \right).
  \end{align*}
  Therefore,  for the complete partial derivative, we obtain
  \begin{align*}
    & \partial_{s_{1}} \hG_{k} (\sigma_{1}, \ldots, \sigma_{k})\\
    & = \hcC(\sigma_{k}) \hcK(\sigma_{k})^{-1} \left(
      \prod\limits_{j = 1}^{k-2} \hcN(\sigma_{k-j}) \hcK(\sigma_{k-j})^{-1} 
      \right) \partial_{s} (\hcN \hcK^{-1} \hcB) (\sigma_{1})\\
    & = \hcC(\sigma_{k}) \hcK(\sigma_{k})^{-1} \left(
      \prod\limits_{j = 1}^{k-2} \hcN(\sigma_{k-j})\hcK(\sigma_{k-j})^{-1} 
      \right)\\
    & \quad{}\times{} \left[ \left( \partial_{s} \hcN \right)
      \hcK^{-1} \hcB - \hcN \cK^{-1} \left( \partial_{s} \hcK \right)
      \hcK^{-1} \hcB + \hcN \hcK^{-1} \left( \partial_{s} \hcB \right) \right]
      (\sigma_{1})\\
    & = \hcC(\sigma_{k}) \hcK(\sigma_{k})^{-1} \left(
      \prod\limits_{j = 1}^{k-2} \hcN(\sigma_{k-j}) \hcK(\sigma_{k-j})^{-1} 
      \right) \partial_{s} \hcN (\sigma_{1}) \hcK(\sigma_{1})^{-1}
      \hcB(\sigma_{1})\\
    & \quad{}-{} \hcC(\sigma_{k}) \hcK(\sigma_{k})^{-1} \left(
      \prod\limits_{j = 1}^{k-2} \hcN(\sigma_{k-j}) \hcK(\sigma_{k-j})^{-1} 
      \right) \hcN(\sigma_{1}) \hcK(\sigma_{1})^{-1}
      \partial_{s} \hcK (\sigma_{1})
      \hcK(\sigma_{1})^{-1} \hcB(\sigma_{1})\\
    & \quad{}+{} \hcC(\sigma_{k}) \hcK(\sigma_{k})^{-1} \left(
      \prod\limits_{j = 1}^{k-2} \hcN(\sigma_{k-j}) \hcK(\sigma_{k-j})^{-1} 
      \right) \hcN(\sigma_{1}) \hcK(\sigma_{1})^{-1}
      \partial_{s} \hcB (\sigma_{1})\\
    & = \underbrace{\cC(\sigma_{k}) \cK(\sigma_{k})^{-1} \left(
      \prod\limits_{j = 1}^{k-2} \cN(\sigma_{k-j}) \cK(\sigma_{k-j})^{-1} 
      \right)}_{\phantom{\, h_{1},~h_{1}^{\herm}\, \in\, \mspan(W)}
      =:\, h_{1},~h_{1}^{\herm}\, \in\, \mspan(W)}
      \partial_{s} \cN (\sigma_{1})
      \underbrace{\cK(\sigma_{1})^{-1} \cB(\sigma_{1})
      \vphantom{\left(\prod\limits_{j = 1}^{k-2}\right)}}_{
      \phantom{\, \mspan(V)} \in\, \mspan(V)}\\
    & \quad{}-{} \underbrace{\cC(\sigma_{k}) \cK(\sigma_{k})^{-1} \left(
      \prod\limits_{j = 1}^{k-2} \cN(\sigma_{k-j}) \cK(\sigma_{k-j})^{-1} 
      \right)\cN(\sigma_{1}) \cK(\sigma_{1})^{-1}}_{
      \phantom{\, h_{2},~h_{2}^{\herm}\, \in\,\mspan(W)}
      =:\, h_{2},~h_{2}^{\herm}\, \in\,\mspan(W)}
      \partial_{s} \cK (\sigma_{1})
      \underbrace{\cK(\sigma_{1})^{-1} \cB(\sigma_{1})
      \vphantom{\left(\prod\limits_{j = 1}^{k-2}\right)}}_{
      \phantom{\, \mspan(V)} \in\, \mspan(V)}\\
    & \quad{}+{} \underbrace{\cC(\sigma_{k}) \cK(\sigma_{k})^{-1} \left(
      \prod\limits_{j = 1}^{k-2} \cN(\sigma_{k-j}) \cK(\sigma_{k-j})^{-1} 
      \right) \cN(\sigma_{1}) \cK(\sigma_{1})^{-1}}_{
      \phantom{\, h_{2},~h_{2}^{\herm}\, \in\, \mspan(W)}
      =\, h_{2},~h_{2}^{\herm}\, \in\, \mspan(W)}
      \partial_{s} \cB (\sigma_{1})\\
    & = \partial_{s_{1}} G_{k} (\sigma_{1}, \ldots, \sigma_{k}),
  \end{align*}
  where we used, as denoted by the underbraces, the construction of
  either $\mspan(W)$ or $\mspan(V)$, and the fact that the model reduction bases 
  $V$ and $W$ are constant matrices.
  As stated before,  the results for the other partial derivatives follow 
  analogously, which proves interpolation of the full Jacobian in the end.
\end{proof}

As in the previous section, by using two-sided projection we can match 
interpolation conditions for a larger number of interpolation points and 
higher-order transfer functions.
Following the results of \Cref{thm:sisovw} we can expect, using
derivatives for the two-sided projection, to match at least
$(k + \ell) + (\theta + \nu) + (k + \ell) \cdot (\theta +
\nu)$ transfer function values, where $k, \ell$ relate to $\mspan(V)$ and
$\theta, \nu$ to $\mspan(W)$, and where $\ell = \ell_{1} + \ldots + \ell_{k}$
and $\nu = \nu_{1} + \ldots + \nu_{\theta}$ denote the orders of the
partial derivatives and $k, \theta$ the orders of the transfer functions to
interpolate.

\begin{theorem}[Hermite interpolation by two-sided projection]%
  \label{thm:sisovwderivative}
  Let $G$ and $\hG$ be as in \Cref{thm:sisovhermite} and let $V$
  be constructed as in \Cref{thm:sisovhermite} for a given set of
  interpolation points $\sigma_{1}, \ldots, \sigma_{k} \in \C$ and orders of
  partial derivatives $\ell_{1}, \ldots, \ell_{k}$, and $W$ as in
  \Cref{thm:sisowhermite} for another set of interpolation points
  $\varsigma_{1}, \ldots, \varsigma_{\theta} \in \C$ and orders of partial 
  derivatives $\nu_{1}, \ldots, \nu_{\theta}$, for which the matrix functions
  $\cC(s), \cK(s)^{-1}, \cN(s)$ and $\cB(s)$ are analytic and $\hcK(s)$ has
  full-rank.
  Then the transfer functions of $\hG$ interpolate the transfer functions of $G$
  in the following way:
  \begin{align*}
    \partial_{s_{1}^{j_{1}}} G_{1} (\sigma_{1})
      & = \partial_{s_{1}^{j_{1}}} \hG_{1} (\sigma_{1}),
      & j_{1} & = 0,\ldots,\ell_{1},\\
    & \,\,\,\vdots\\
    \partial_{s_{1}^{\ell_{1}} \cdots s_{k-1}^{\ell_{k-1}} s_{k}^{j_{k}}}
      G_{k} (\sigma_{1}, \ldots, \sigma_{k})
      & = \partial_{s_{1}^{\ell_{1}} \cdots s_{k-1}^{\ell_{k-1}} s_{k}^{j_{k}}}
      \hG_{k} (\sigma_{1}, \ldots, \sigma_{k}),
      & j_{k} & = 0,\ldots,\ell_{k},\\
    \partial_{s_{1}^{i_{\theta}}} G_{1} (\varsigma_{\theta})
      & = \partial_{s_{1}^{i_{\theta}}} \hG_{1} (\varsigma_{\theta}),
      & i_{\theta} & = 0,\ldots,\nu_{\theta},\\
    & \,\,\,\vdots\\
    \partial_{s_{1}^{i_{1}} s_{2}^{\nu_{2}} \cdots s_{\theta}^{\nu_{\theta}}}
      G_{\theta} (\varsigma_{1}, \ldots, \varsigma_{\theta})
      & = \partial_{s_{1}^{i_{1}} s_{2}^{\nu_{2}} \cdots
      s_{\theta}^{\nu_{\theta}}} \hG_{\theta} (\varsigma_{1}, \ldots,
      \varsigma_{\theta}),
      & i_{1} & = 0,\ldots,\nu_{1},
  \end{align*}
  and additionally,
  \begin{align*}
    & \partial_{s_{1}^{\ell_{1}} \cdots s_{q-1}^{\ell_{q-1}} s_{q}^{j_{q}}
      s_{q+1}^{i_{\theta - \eta + 1}} s_{q+2}^{\nu_{\theta - \eta + 2}}
      \cdots s_{q + \eta}^{\nu_{\theta}}} G_{q + \eta} (\sigma_{1}, \ldots,
      \sigma_{q}, \varsigma_{\theta - \eta + 1}, \ldots, \varsigma_{\theta})\\
    & = \partial_{s_{1}^{\ell_{1}} \cdots s_{q-1}^{\ell_{q-1}} s_{q}^{j_{q}}
      s_{q+1}^{i_{\theta - \eta + 1}} s_{q+2}^{\nu_{\theta - \eta + 2}}
      \cdots s_{q + \eta}^{\nu_{\theta}}} \hG_{q + \eta} (\sigma_{1}, \ldots,
      \sigma_{q}, \varsigma_{\theta - \eta + 1}, \ldots, \varsigma_{\theta})
  \end{align*}
  holds for $j_{q} = 0, \ldots, \ell_{q}$; $i_{\theta - \eta + 1} = 0, \ldots,
  \nu_{\theta - \eta + 1}$; $1 \leq q \leq k$ and $1 \leq \eta \leq \theta$.
\end{theorem}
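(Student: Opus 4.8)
The plan is to reprise the argument of \Cref{thm:sisovw}, now carrying partial derivatives along with the Leibniz-rule bookkeeping from the proofs of \Cref{thm:sisovhermite,thm:sisowhermite}. First, the four families of ``pure'' conditions (the $\sigma$-derivatives of $G_{1},\ldots,G_{k}$ and the $\varsigma$-derivatives of $G_{1},\ldots,G_{\theta}$) are immediate: $V$ is built exactly as \Cref{thm:sisovhermite} requires for the points $\sigma_{1},\ldots,\sigma_{k}$ and orders $\ell_{1},\ldots,\ell_{k}$, and $W$ exactly as \Cref{thm:sisowhermite} requires for $\varsigma_{1},\ldots,\varsigma_{\theta}$ and $\nu_{1},\ldots,\nu_{\theta}$, so those theorems apply verbatim. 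Only the mixed conditions remain.

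Fix $q$, $\eta$, and the free orders $j_{q}\leq\ell_{q}$ and $i_{\theta-\eta+1}\leq\nu_{\theta-\eta+1}$. In $G_{q+\eta}(s_{1},\ldots,s_{q+\eta})$ from~\cref{eqn:sisotf} each variable $s_{l}$ occurs in a single block — in $\cN(s_{l})\cK(s_{l})^{-1}$ for $1<l<q+\eta$, in $\cN(s_{1})\cK(s_{1})^{-1}\cB(s_{1})$ for $l=1$, and in $\cC(s_{q+\eta})\cK(s_{q+\eta})^{-1}$ for $l=q+\eta$ — and likewise for $\hG_{q+\eta}$. Hence the mixed partial derivative in the statement factors through single-variable derivatives, and after evaluation at $(\sigma_{1},\ldots,\sigma_{q},\varsigma_{\theta-\eta+1},\ldots,\varsigma_{\theta})$ it splits as $A_{\partial}B_{\partial}$, where $A_{\partial}$ ($1\times n$) is the $\varsigma$-derivative of the block $\cC(\varsigma_{\theta})\cK(\varsigma_{\theta})^{-1}\prod_{j=1}^{\eta-1}\cN(\varsigma_{\theta-j})\cK(\varsigma_{\theta-j})^{-1}$ and $B_{\partial}$ ($n\times 1$) is the $\sigma$-derivative of $\prod_{i=0}^{q-1}\cN(\sigma_{q-i})\cK(\sigma_{q-i})^{-1}\cB(\sigma_{1})$; the analogous reduced factors $\widehat{A}_{\partial}$ ($1\times r$) and $\widehat{B}_{\partial}$ ($r\times 1$) satisfy $\partial\hG_{q+\eta}=\widehat{A}_{\partial}\widehat{B}_{\partial}$. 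It therefore suffices to prove the two identities $\widehat{B}_{\partial}=W^{\herm}B_{\partial}$ and $\widehat{A}_{\partial}W^{\herm}=A_{\partial}$, since then $\partial\hG_{q+\eta}=\widehat{A}_{\partial}\widehat{B}_{\partial}=\widehat{A}_{\partial}W^{\herm}B_{\partial}=A_{\partial}B_{\partial}=\partial G_{q+\eta}$.

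To establish $\widehat{B}_{\partial}=W^{\herm}B_{\partial}$: the variable $s_{q}$ sits only in the leading block $\cN(s_{q})\cK(s_{q})^{-1}$, so the Leibniz rule gives $B_{\partial}=\sum_{i=0}^{j_{q}}\binom{j_{q}}{i}\,\partial_{s^{i}}\cN(\sigma_{q})\,v_{q,j_{q}-i}$, because differentiating the remaining string $\cN(s_{q-1})\cK(s_{q-1})^{-1}\cdots\cN(s_{1})\cK(s_{1})^{-1}\cB(s_{1})$ by the multi-index $(\ell_{1},\ldots,\ell_{q-1})$ produces exactly the factor $\big(\prod_{j=1}^{q-2}\partial_{s^{\ell_{q-j}}}(\cN\cK^{-1})(\sigma_{q-j})\big)\partial_{s^{\ell_{1}}}(\cN\cK^{-1}\cB)(\sigma_{1})$ from the definition of the $v_{q,\cdot}$ in \Cref{thm:sisovhermite}, and $v_{q,j_{q}-i}\in\mspan(V)$ for all $i$ since $j_{q}\leq\ell_{q}$. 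Running the identical Leibniz expansion for $\widehat{B}_{\partial}$ with $\cN,\cK,\cB$ replaced by $\hcN,\hcK,\hcB$, and then pushing the oblique projectors $V(W^{\herm}\cK V)^{-1}W^{\herm}\cK$ through from the right exactly as in the proofs of \Cref{thm:sisov,thm:sisovhermite} (invoking the $k=1$ Hermite statement of~\cite{morBeaG09} at the innermost step), collapses $\widehat{B}_{\partial}$ to $W^{\herm}B_{\partial}$. The identity $\widehat{A}_{\partial}W^{\herm}=A_{\partial}$ follows symmetrically: on taking Hermitian transposes, $A_{\partial}^{\herm}$ is precisely the generating vector $w_{\eta,i_{\theta-\eta+1}}\in\mspan(W)$ built in \Cref{thm:sisowhermite} for $\varsigma_{1},\ldots,\varsigma_{\theta}$ and $\nu_{1},\ldots,\nu_{\theta}$ (using $i_{\theta-\eta+1}\leq\nu_{\theta-\eta+1}$), so the computation in the proofs of \Cref{thm:sisow,thm:sisowhermite} applies.

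I expect the only genuine difficulty to be bookkeeping rather than any new idea: one must check that the Leibniz expansion of the mixed partial never asks for a derivative of higher order at $\sigma_{q}$ or $\varsigma_{\theta-\eta+1}$ than was actually inserted into $V$ and $W$, and one must locate the junction correctly — the trailing $\cK(\varsigma_{\theta-\eta+1})^{-1}$ of the last $\varsigma$-block meets the leading $\cN(\sigma_{q})$ of the first $\sigma$-block — so that the interior $W^{\herm}$ and $V$ are absorbed by the projectors on either side, exactly as in the proof of \Cref{thm:sisovw}. The product rule through the intermediate $\hcN$ and $\hcK^{-1}$ factors, and the induction over the transfer-function index, then proceed verbatim as in the one-sided Hermite proofs.
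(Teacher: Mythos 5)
Your proposal is correct and takes essentially the same route as the paper's own proof: both dispose of the pure conditions by citing the one-sided Hermite theorems and then treat the mixed partials by factoring the blockwise derivative at the junction between the $\varsigma$-chain (whose conjugate transpose is the generating vector $w_{\eta,i_{\theta-\eta+1}}$ of $\mspan(W)$) and the $\sigma$-chain (built from the $v_{q,\cdot}\in\mspan(V)$), so that the interior projectors collapse. If anything, your Leibniz expansion of the junction term into $\sum_{i}\binom{j_q}{i}\partial_{s^{i}}\cN(\sigma_q)\,v_{q,j_q-i}$ is stated more carefully than the paper's corresponding underbrace, which loosely asserts that the entire right-hand factor lies in $\mspan(V)$.
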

\begin{proof} \allowdisplaybreaks
  As for \Cref{thm:sisovw}, the first parts of the result just summarize
  the theorems stating the one-sided projection approaches
  (\Cref{thm:sisovhermite,thm:sisowhermite}), i.e., we only
  need to prove the additional interpolation constraints with the mixed
  partial derivatives.
  It holds
  \begin{align*}
    & \partial_{s_{1}^{\ell_{1}} \cdots s_{q-1}^{\ell_{q-1}} s_{q}^{j_{q}}
      s_{q+1}^{i_{\theta - \eta + 1}} s_{q+2}^{\nu_{\theta - \eta + 2}}
      \cdots s_{q + \eta}^{\nu_{\theta}}} G_{q + \eta} (\sigma_{1}, \ldots,
      \sigma_{q}, \varsigma_{\theta - \eta + 1}, \ldots, \varsigma_{\theta})\\
    & =  \partial_{s^{\nu_{\theta}}} (\hcC \hcK^{-1}) (\varsigma_{\theta})
      \cdots
      \partial_{s^{\nu_{\theta-\eta+2}}} (\hcN \hcK^{-1})
      (\varsigma_{\theta-\eta+2})
      \partial_{s^{i_{\theta-\eta+1}}} (\hcN \hcK^{-1})
      (\varsigma_{\theta-\eta+1})\\
    & \quad{}\times{} \partial_{s^{j_{q}}} (\hcN \hcK^{-1}) (\sigma_{q})
      \partial_{s^{\ell_{q-1}}} (\hcN \hcK^{-1}) (\sigma_{q-1})
      \cdots
      \partial_{s^{\ell_{1}}} (\hcN \hcK^{-1} \hcB) (\sigma_{1})\\
    & = \underbrace{\partial_{s^{\nu_{\theta}}} (\cC \cK^{-1})
      (\varsigma_{\theta})
      \cdots
      \partial_{s^{\nu_{\theta-\eta+2}}} (\cN \cK^{-1})
      (\varsigma_{\theta-\eta+2})
      \partial_{s^{i_{\theta-\eta+1}}} (\cN \cK^{-1})
      (\varsigma_{\theta-\eta+1})}_{
      \phantom{\, h,~h\, \in\, \mspan(W)} =:\, h,~h\, \in\, \mspan(W)}\\
    & \quad{}\times{} \underbrace{\partial_{s^{j_{q}}} (\cN \cK^{-1})
      (\sigma_{q})
      \partial_{s^{\ell_{q-1}}} (\cN \cK^{-1})
     (\sigma_{q-1})
      \cdots
      \partial_{s^{\ell_{1}}} (\cN \cK^{-1} \cB)
      (\sigma_{1})}_{\phantom{\, \mspan(V)} \in\, \mspan(V)}\\
    & = \partial_{s_{1}^{\ell_{1}} \cdots s_{q-1}^{\ell_{q-1}} s_{q}^{j_{q}}
      s_{q+1}^{i_{\theta - \eta + 1}} s_{q+2}^{\nu_{\theta - \eta + 2}}
      \cdots s_{q + \eta}^{\nu_{\theta}}} \hG_{q + \eta} (\sigma_{1}, \ldots,
      \sigma_{q}, \varsigma_{\theta - \eta + 1}, \ldots, \varsigma_{\theta})
  \end{align*}
  for $j_{q} = 0, \ldots, \ell_{q}$; $i_{\theta - \eta + 1} = 0, \ldots,
  \nu_{\theta - \eta + 1}$; $1 \leq q \leq k$ and $1 \leq \eta \leq \theta$.
\end{proof}

For an easier understanding of \Cref{thm:sisovwderivative}, we
consider here a small theoretical example, where we only interpolate the linear
part choosing $k = \theta = 1$, the interpolation points $\sigma,
\varsigma$ and for the partial derivatives $\ell = \ell_{1} = 2$ and $\nu =
\nu_{1} = 1$.
Then using the first part of \Cref{thm:sisovwderivative} we enforce
interpolation of the following terms by means of $\mspan(V)$:
\begin{align*}
  \begin{aligned}
    G_{1}(\sigma), && \partial_{s_{1}} G_{1} (\sigma), &&
      \partial_{s_{1}^{2}} G_{1} (\sigma),
  \end{aligned}
\end{align*}
And similarly via $\mspan(W)$, we enforce interpolation of
\begin{align*}
  \begin{aligned}
    G_{1}(\varsigma), && \partial_{s_{1}} G_{1} (\varsigma).
  \end{aligned}
\end{align*}
By using two-sided projection, we can now additionally match higher-order
transfer functions and their partial derivatives, namely
\begin{align*}
  \begin{aligned}
    G_{2}(\sigma, \varsigma), &&
      \partial_{s_{1}} G_{2} (\sigma, \varsigma), &&
      \partial_{s_{2}} G_{2} (\sigma, \varsigma), &&
      \partial_{s_{1}^{2}} G_{2} (\sigma, \varsigma), &&
      \partial_{s_{1}s_{2}} G_{2} (\sigma, \varsigma), &&
      \partial_{s_{1}^{2}s_{2}} G_{2} (\sigma, \varsigma).
  \end{aligned}
\end{align*}
As already realized in \Cref{thm:sisovwhermite}, two-sided projection
with the same sets of interpolation points leads to additional interpolation
of derivatives.
This also works in combination with \Cref{thm:sisovwderivative}.
The following corollary states a particular special case.

\begin{corollary} \label{cor:tothm}
  Assume $G$ and $\hG$ are constructed as in \Cref{thm:sisovwderivative}
  for identical sets of interpolation points $\sigma_{1}, \ldots, \sigma_{k} \in 
  \C$ and matching orders of the partial derivatives, i.e.,
  $\ell_{1} = \nu_{1}$, \ldots, $\ell_{k} = \nu_{k}$.
  Then additionally to the interpolation results of
  \Cref{thm:sisovwderivative} it holds
  \begin{align*}
    \nabla \left( \partial_{s_{1}^{\ell_{1}} \cdots s_{k}^{\ell_{k}}} G_{k}
      (\sigma_{1}, \ldots, \sigma_{k}) \right)
      & = \nabla \left( \partial_{s_{1}^{\ell_{1}} \cdots s_{k}^{\ell_{k}}} 
      \hG_{k} (\sigma_{1}, \ldots, \sigma_{k}) \right).
  \end{align*}
\end{corollary}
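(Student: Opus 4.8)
The plan is to reduce the claim to a single new identity and then prove it by the direct, projector-based computation already used in the proofs of \Cref{thm:sisovwhermite,thm:sisovwderivative}. Choosing $\varsigma_{i} = \sigma_{i}$, $\nu_{i} = \ell_{i}$, and $\theta = k$ in \Cref{thm:sisovwderivative} immediately produces all of the interpolation conditions it lists; hence the only statement that still has to be established is the Jacobian matching
\begin{align*}
  \nabla\big( \partial_{s_{1}^{\ell_{1}} \cdots s_{k}^{\ell_{k}}} G_{k} \big)
    (\sigma_{1}, \ldots, \sigma_{k})
  & = \nabla\big( \partial_{s_{1}^{\ell_{1}} \cdots s_{k}^{\ell_{k}}} \hG_{k} \big)
    (\sigma_{1}, \ldots, \sigma_{k}),
\end{align*}
which componentwise means $\partial_{s_{1}^{\ell_{1}} \cdots s_{j}^{\ell_{j}+1} \cdots s_{k}^{\ell_{k}}} G_{k} = \partial_{s_{1}^{\ell_{1}} \cdots s_{j}^{\ell_{j}+1} \cdots s_{k}^{\ell_{k}}} \hG_{k}$ at $(\sigma_{1}, \ldots, \sigma_{k})$ for every $j = 1, \ldots, k$. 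Note that any combination of derivative orders $\le \ell_{i}$ in the $k$-th transfer function is already covered by \Cref{thm:sisovwderivative}; it is precisely the \emph{one extra} order on a single argument that has to be recovered from the two-sided structure, in exactly the way the bonus $\nabla G_{k} = \nabla \hG_{k}$ of \Cref{thm:sisovwhermite} is recovered when $\ell_{1} = \cdots = \ell_{k} = 0$.

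Fix $j$. Since each frequency variable $s_{i}$ appears only in consecutive factors of $G_{k}$, I would first group the mixed derivative into blocks depending on pairwise disjoint variables,
\begin{align*}
  \partial_{s_{1}^{\ell_{1}} \cdots s_{k}^{\ell_{k}}} G_{k}(\sigma_{1}, \ldots,
    \sigma_{k}) & = R_{k} R_{k-1} \cdots R_{1},
\end{align*}
where $R_{k} = \partial_{s^{\ell_{k}}}(\cC\cK^{-1})(\sigma_{k})$, $R_{i} = \partial_{s^{\ell_{i}}}(\cN\cK^{-1})(\sigma_{i})$ for $1 < i < k$, and $R_{1} = \partial_{s^{\ell_{1}}}(\cN\cK^{-1}\cB)(\sigma_{1})$, with analogous blocks $\widehat{R}_{i}$ for $\hG_{k}$. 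Then $\partial_{s_{j}}$ hits only $R_{j}$ and raises its order to $\ell_{j}+1$, so it suffices to match $R_{k} \cdots R_{j+1}\, \partial_{s^{\ell_{j}+1}}(\cN\cK^{-1})(\sigma_{j})\, R_{j-1} \cdots R_{1}$ (with the obvious substitution of $\cN\cK^{-1}\cB$ for $j = 1$ and $\cC\cK^{-1}$ for $j = k$). The essential manipulation is to expand the middle factor by the product rule, $\partial_{s^{\ell_{j}+1}}(\cN\cK^{-1}) = \sum_{p} \binom{\ell_{j}+1}{p} (\partial_{s^{p}}\cN)(\partial_{s^{\ell_{j}+1-p}}\cK^{-1})$, and then, whenever the inverse factor still carries order $\ell_{j}+1$ (i.e., the $p = 0$ term), to peel it once using $\partial_{s^{\ell_{j}+1}}\cK^{-1} = -\cK^{-1} \sum_{i \ge 1} \binom{\ell_{j}+1}{i} (\partial_{s^{i}}\cK)(\partial_{s^{\ell_{j}+1-i}}\cK^{-1})$, which follows from $\cK\cK^{-1} = I$ (for $j = 1$ one peels $(\cK^{-1}\cB)$ as a unit via $\cK(\cK^{-1}\cB) = \cB$; for $j = k$ the same $\cK\cK^{-1}=I$ suffices). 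After this step every summand has the form $h \cdot X(\sigma_{j}) \cdot v$, where $h$ equals $R_{k} \cdots R_{j+1}$ or $R_{k} \cdots R_{j+1}(\cN\cK^{-1})(\sigma_{j})$, which lies in $\mspan(W)^{\herm}$ by the construction in \Cref{thm:sisowhermite}; $v$ equals $\partial_{s^{d}}\cK^{-1}(\sigma_{j})\, R_{j-1} \cdots R_{1} = v_{j,d}$ with $d \le \ell_{j}$, which lies in $\mspan(V)$ by the construction in \Cref{thm:sisovhermite}; and $X(\sigma_{j})$ is a single ``bare'' derivative, either $\partial_{s^{p}}\cN(\sigma_{j})$ with $p \ge 1$ or $\partial_{s^{i}}\cK(\sigma_{j})$ with $i \ge 1$ (for $j = 1$ the tail $v$ is $\partial_{s^{d}}(\cK^{-1}\cB)(\sigma_{1})$, for $j = k$ the head $h$ ends in $\partial_{s^{c}}(\cC\cK^{-1})(\sigma_{k})$, and a terminal bare $\partial_{s^{t}}\cB(\sigma_{1})$ or $\partial_{s^{t}}\cC(\sigma_{k})$ needs no subspace membership since nothing follows it). For a term in this shape the reduced quantity equals it by the oblique-projector identity $V\hcK(\sigma_{j})^{-1}W^{\herm} = P_{v_{j}}\cK(\sigma_{j})^{-1}$, with $P_{v_{j}}$ the oblique projector onto $\mspan(V)$, together with its left-side dual and the constancy of $V$ and $W$ --- exactly the mechanism appearing in the proofs of \Cref{thm:sisov,thm:sisow,thm:sisovwhermite}. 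Summing over all summands, and then over $j$, yields the Jacobian identity, hence the corollary.

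The step I expect to be the main obstacle is verifying that the above decomposition is exhaustive and correctly accounted for: one must check that \emph{no} summand of the (in the $p = 0$ case, iterated) product-rule expansion escapes the pattern ``co-vector in $\mspan(W)$ / one bare derivative / vector in $\mspan(V)$'', that peeling $\partial_{s^{\ell_{j}+1}}\cK^{-1}$ always leaves a residual inverse block of order $\le \ell_{j}$ while routing the surplus order onto a bare $\partial_{s}\cK$-factor, and that the multinomial coefficients bookkeep correctly; the three positional cases of $j$ ($j = 1$ carrying $\cB$, $1 < j < k$, and $j = k$ carrying $\cC$) each need this argument run separately. Once this combinatorial bookkeeping is carried out, no idea beyond those already used in \Cref{thm:sisov,thm:sisow,thm:sisovwhermite,thm:sisovwderivative} is required.
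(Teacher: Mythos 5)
Your proposal is correct and follows essentially the same route as the paper, whose proof simply defers to the mechanism of \Cref{thm:sisovwhermite} with the highest-order sampled derivatives playing the role of the final interpolation conditions; you carry out that same projector-based, subspace-membership argument explicitly, adding the product-rule and $\cK\cK^{-1}=I$ peeling bookkeeping that the higher derivative orders require. The extra detail is sound (in particular, a single peel of $\partial_{s^{\ell_j+1}}\cK^{-1}$ already reduces every residual inverse factor to order at most $\ell_j$, so no iteration is needed), and no new idea beyond the paper's is introduced.
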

\begin{proof}
  The proof follows directly from \Cref{thm:sisovwhermite} by setting
  the last partial derivative as the final interpolation condition of the left
  and right projection spaces.
\end{proof}

%%%%%%%%%%%%%%%%%%%%%%%%%%%%%%%%%%%%%%%%%%%%%%%%%%%%%%%%%%%%%%%%%%%%%%%%%%%%%%%%

\subsection{Numerical examples}
\label{sec:sisoexamples}

We illustrate the SISO analysis using two numerical
examples, having the structured bilinearities as in  
\Cref{sec:2ndorderexm,sec:time-delay-models}.
We compare our resulting structure-preserving interpolation
framework to other approaches from the literature that have been used to 
approximate structured bilinear systems \emph{without preserving the structure}, 
as in, e.g.,~\cite{morAlbFB93, morGosPBetal19}.

We compare the approximation error both in time and frequency domains.
In time domain, we display a point-wise relative output error for a given input 
signal, namely
\begin{align*}
  \frac{\lvert y(t) - \hy(t) \rvert}{\lvert y(t) \rvert},
\end{align*}
for $t \in [0, t_{f}]$, and in frequency domain, we display the point-wise 
relative error of the first and second subsystem transfer functions, i.e.,
\begin{align*}
  \begin{aligned}
    \frac{\lvert G_{1}(\omega_{1} \iu) - \hG_{1}(\omega_{1} \iu) \rvert}{\lvert 
      G_{1}(\omega_{1} \iu) \rvert} &&& \text{and} &
      \frac{\lvert G_{2}(\omega_{1} \iu, \omega_{2} \iu) -
      \hG_{2}(\omega_{1} \iu, \omega_{2} \iu) \rvert}{\lvert G_{2}(\omega_{1} 
      \iu, \omega_{2} \iu) \rvert},
  \end{aligned}
\end{align*}
for the frequencies $\omega_{1}, \omega_{2} \in [\omega_{\min}, \omega_{\max}]$.

The experiments reported here have been executed on a machine with 2 Intel(R)
Xeon(R) Silver 4110 CPU processors running at 2.10\,GHz and equipped with
192\,GB total main memory.
The computer runs on CentOS Linux release 7.5.1804 (Core) using
MATLAB 9.7.0.1190202 (R2019b).

%%%%%%%%%%%%%%%%%%%%%%%%%%%%%%%%%%%%%%%%%%%%%%%%%%%%%%%%%%%%%%%%%%%%%%%%%%%%%%%%

\subsubsection{Damped mass-spring system}%
\label{sec:siso_msd}

First, we consider a damped mass-spring system.
The linear parts of the dynamics are modeled as in~\cite{morMehS05}, describing 
a chain of masses connected by springs and dampers, where each mass is 
additionally connected to a separate spring and damper.
In order to  focus on only the mechanical structure, we removed the holonomic
constraint from~\cite{morMehS05}.
For the bilinear part, the springs are modeled to be dependent on the applied
external force, such that a displacement to the right increases the stiffness
due to compression of the springs and to the left decreases it due to the
appearing strain.
This results in a structured bilinear control system of the form
\begin{align} \label{eqn:msd}
  \begin{aligned}
    M \ddot{q}(t) + D \dot{q}(t) + K q(t) & = N_{\rm{p}} q(t) u(t)
      + B_{\rm{u}} u(t),\\
    y(t) & = C_{\rm{p}} q(t),
  \end{aligned}
\end{align}
with $M, D, K, N_{\rm{p}} \in \R^{n \times n}$ and
$B_{\rm{u}}, C_{\rm{p}}^{\trans} \in \R^{n}$.
The input matrix is chosen to apply the external force only to the first mass,
i.e., $B = e_{1}$, and the output gives the displacement of the second mass,
i.e., $C = e_{2}^{\trans}$, where $e_{i}$ denotes the $i$-th column of the 
identity matrix $I_{n}$.
The bilinear term is a scaled version of the stiffness matrix
\begin{align*}
  N_{\rm{p}} = -SKS,
\end{align*}
where $S$ is a diagonal matrix containing entries as
\texttt{linspace(0.2, 0, n)}.
For our experiment, we have chosen the original system to consist of
$n = 1\,000$ masses.

We construct three reduced-order models: (i) our structure-preserving bilinear
interpolation, denoted by StrInt, (ii) two unstructured classical bilinear 
approximations by converting~\cref{eqn:msd} to first-order 
form~\cref{eqn:bsosyscomp} followed by interpolatory model reduction of this 
first-order system, denoted by FOInt. Note that  FOInt yields a 
reduced-order model of the form~\cref{eqn:bsosyscomp}, which does not retain the 
underlying physical structure.
Also, it needs to be remarked that the computational effort for the construction 
of FOInt is higher than for the structure-preserving approach due to solving 
underlying linear systems of doubled size, even in a structure exploiting 
implementation; see, e.g.,~\cite{morBenS11}.
Since the original system is a mechanical model, we use only a
one-sided projection to preserve the mechanical properties in the reduced-order
model, i.e., we apply \Cref{thm:sisov} and set $W = V$.
For all approximants, we focus on  the first and second transfer
functions and choose purely imaginary interpolation points.
We construct StrInt and FOInt(12) by using the interpolation points
$\pm \texttt{logspace(-4, 4, 3)} \iu$ such that the resulting reduced-order 
bilinear systems are of order $r = 12$, giving two different interpolations in 
the same frequency points.
Since bilinear second-order systems can be rewritten as first-order systems by 
doubling the state-space dimension, we construct additionally a second 
unstructured approximation FOInt(24) of order $r = 24$ by using
$\pm \texttt{logspace(-4, 4, 6)} \iu$, which has twice the order of StrInt.

\begin{figure}[tb]
  \begin{center}
    \begin{subfigure}[t]{.49\textwidth}
      \begin{center}
  \tikzexternalenable%
  \tikzsetnextfilename{msd_time_sim}%
  \filemodCmp{graphics/msd_time_sim.tikz}{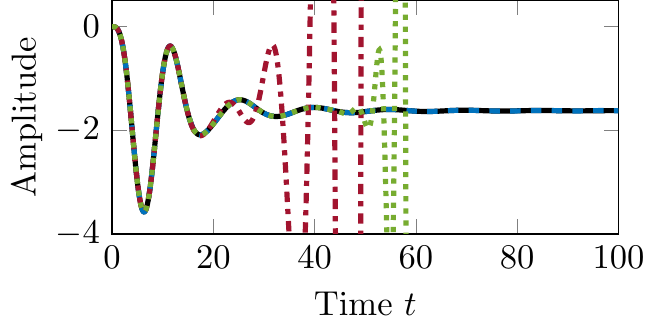}%
    {\tikzset{external/remake next}}{}%
  \begin{tikzpicture}[
  every axis/.append style = {
    scaled x ticks = false,
    x tick label style = {/pgf/number format/fixed},
    cycle list name = plotlist}
  ]
  \pgfplotstableread{graphics/data/msd_time_sim.dat}\tableSIM
  
  \begin{axis}[%
    width  = .7\textwidth,
    height = .1\textheight,
    scale only axis,
    xmin = 0,
    xmax = 100,
    restrict y to domain = -8:3,
    ymin = -4,
    ymax = 0.5,
    xminorticks = false,
    yminorticks = false,
    xlabel = {Time $t$},
    ylabel = {Amplitude},
    ylabel style = {yshift = -.3em}]
        
    \addplot table[x index = 0, y index = 1] {\tableSIM};
    \addplot table[x index = 0, y index = 2] {\tableSIM};
    \addplot table[x index = 0, y index = 3] {\tableSIM};
    \addplot table[x index = 0, y index = 4] {\tableSIM};
  \end{axis}
\end{tikzpicture}%
  \tikzexternaldisable%

        \subcaption{Time response.}
      \end{center}
    \end{subfigure}
    \hfill
    \begin{subfigure}[t]{.49\textwidth}
      \begin{center}
  \tikzexternalenable%
  \tikzsetnextfilename{msd_time_err}%
  \filemodCmp{graphics/msd_time_err.tikz}{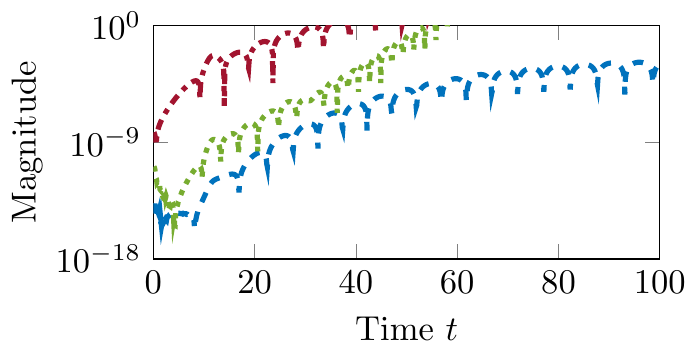}%
    {\tikzset{external/remake next}}{}%
  \begin{tikzpicture}[
  every axis/.append style = {
    scaled x ticks = false,
    x tick label style = {/pgf/number format/fixed},
    cycle list name = plotlist}
  ]
  \pgfplotstableread{graphics/data/msd_time_err.dat}\tableRELERR
  
  \begin{semilogyaxis}[%
    width  = .7\textwidth,
    height = .1\textheight,
    scale only axis,
    xmin = 0,
    xmax = 100,
    ymin = 1e-18,
    ymax = 1,
    xminorticks = false,
    yminorticks = false,
    xlabel = {Time $t$},
    ylabel = {Magnitude},
    ylabel style = {yshift = -.3em}]
    
    \pgfplotsset{cycle list shift = 1}
    \addplot table[x index = 0, y index = 1] {\tableRELERR};
    \addplot table[x index = 0, y index = 2] {\tableRELERR};
    \addplot table[x index = 0, y index = 3] {\tableRELERR};
  \end{semilogyaxis}
\end{tikzpicture}%
  \tikzexternaldisable%

        \subcaption{Relative errors.}
      \end{center}
    \end{subfigure}
    \vspace{.5\baselineskip}

  \tikzexternalenable%
  \tikzsetnextfilename{msd_time_legend}%
  \filemodCmp{graphics/msd_time_legend.tikz}{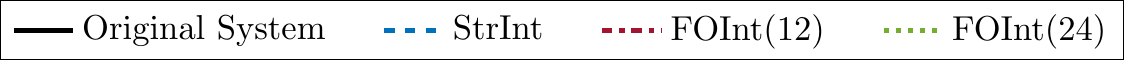}%
    {\tikzset{external/remake next}}{}%
  \begin{tikzpicture}[
  every axis/.append style = {
    scaled x ticks = false,
    x tick label style = {/pgf/number format/fixed},
    cycle list name = plotlist}
  ]
  
  \begin{axis}[%
    hide axis,
    scale only axis,
    width = 1mm,
    legend columns = 4, 
    legend style = {
      at     = {(0,0)},
      anchor = center,
      /tikz/every even column/.append style = {column sep = 0.5cm}}]
    \pgfplotsinvokeforeach{1,...,4}{\addplot coordinates {(0,0)};}
    
    \addlegendentry{Original System};
    \addlegendentry{StrInt};
    \addlegendentry{FOInt(12)};
    \addlegendentry{FOInt(24)};
  \end{axis}
\end{tikzpicture}%
  \tikzexternaldisable%

    \caption{Time simulation results for the damped mass-spring system.}
    \label{fig:msd_time}
  \end{center}
\end{figure}

\begin{figure}[tb]
  \begin{center}
    \begin{subfigure}[t]{.49\textwidth}
      \begin{center}
  \tikzexternalenable%
  \tikzsetnextfilename{msd_freq_g1_tf}%
  \filemodCmp{graphics/msd_freq_g1_tf.tikz}{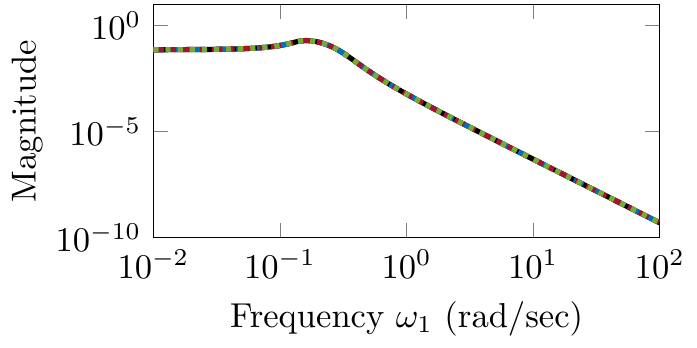}%
    {\tikzset{external/remake next}}{}%
  \begin{tikzpicture}[
  every axis/.append style = {
    scaled x ticks = false,
    x tick label style = {/pgf/number format/fixed},
    cycle list name = plotlist}
  ]
  \pgfplotstableread{graphics/data/msd_freq_g1_tf.dat}\tableTF

  \begin{loglogaxis}[%
    width  = .7\textwidth,
    height = .1\textheight,
    scale only axis,
    xmin = 1e-2,
    xmax = 1e+2,
    ymin = 1e-10,
    ymax = 1e+1,
    xminorticks = false,
    yminorticks = false,
    xlabel = {Frequency $\omega_{1}$ (rad/sec)},
    ylabel = {Magnitude},
    ylabel style = {yshift = -.3em}]
        
    \addplot table[x index = 0, y index = 1] {\tableTF};
    \addplot table[x index = 0, y index = 2] {\tableTF};
    \addplot table[x index = 0, y index = 3] {\tableTF};
    \addplot table[x index = 0, y index = 4] {\tableTF};
  \end{loglogaxis}
\end{tikzpicture}%
  \tikzexternaldisable%

        \subcaption{Frequency response.}
      \end{center}
    \end{subfigure}
    \hfill
    \begin{subfigure}[t]{.49\textwidth}
      \begin{center}
  \tikzexternalenable%
  \tikzsetnextfilename{msd_freq_g1_err}%
  \filemodCmp{graphics/msd_freq_g1_err.tikz}{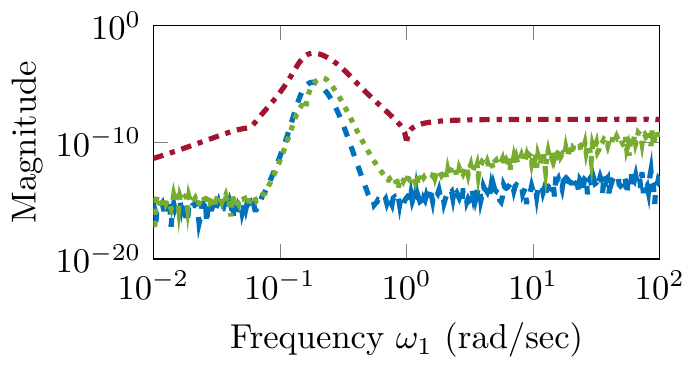}%
    {\tikzset{external/remake next}}{}%
  \begin{tikzpicture}[
  every axis/.append style = {
    scaled x ticks = false,
    x tick label style = {/pgf/number format/fixed},
    cycle list name = plotlist}
  ]
  \pgfplotstableread{graphics/data/msd_freq_g1_err.dat}\tableRELERR
  
  \begin{loglogaxis}[%
    width  = .7\textwidth,
    height = .1\textheight,
    scale only axis,
    xmin = 1e-2,
    xmax = 1e+2,
    ymin = 1e-20,
    ymax = 1,
    xminorticks = false,
    yminorticks = false,
    xlabel={Frequency $\omega_{1}$ (rad/sec)},
    ylabel = {Magnitude},
    ylabel style = {yshift = -.3em}]
        
    \pgfplotsset{cycle list shift = 1}
    \addplot table[x index = 0, y index = 1] {\tableRELERR};
    \addplot table[x index = 0, y index = 2] {\tableRELERR};
    \addplot table[x index = 0, y index = 3] {\tableRELERR};
  \end{loglogaxis}
\end{tikzpicture}%
  \tikzexternaldisable%

        \subcaption{Relative errors.}
      \end{center}
    \end{subfigure}
    \vspace{.5\baselineskip}

  \tikzexternalenable%
  \tikzsetnextfilename{msd_freq_g1_legend}%
  \filemodCmp{graphics/msd_freq_g1_legend.tikz}{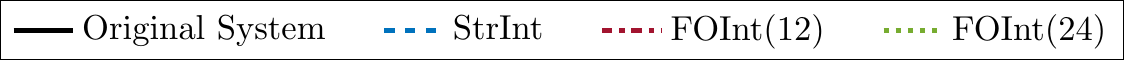}%
    {\tikzset{external/remake next}}{}%
  \begin{tikzpicture}[
  every axis/.append style = {
    scaled x ticks = false,
    x tick label style = {/pgf/number format/fixed},
    cycle list name = plotlist}
  ]
  
  \begin{axis}[%
    hide axis,
    scale only axis,
    width = 1mm,
    legend columns = 4, 
    legend style = {
    at     = {(0,0)},
    anchor = center,
    /tikz/every even column/.append style = {column sep = 0.5cm}}]
    \pgfplotsinvokeforeach{1,...,4}{\addplot coordinates {(0,0)};}
        
    \addlegendentry{Original System};
    \addlegendentry{StrInt};
    \addlegendentry{FOInt(12)};
    \addlegendentry{FOInt(24)};
  \end{axis}
\end{tikzpicture}%
  \tikzexternaldisable%

    \caption{Frequency domain results of the first transfer functions for the
      damped mass-spring system.}
    \label{fig:msd_freq_g1}
  \end{center}
\end{figure}

\Cref{fig:msd_time} shows the time output of the original system,
as well as that of the structure-preserving (StrInt) and first-order 
interpolations (FOInt(12), FOInt(24)), where we applied the input signal
\begin{align*}
  u(t) & = \sin(200 t) + 200,
\end{align*}
which can be seen as a step signal with a sinusoidal disturbance.
We see that while all three outputs are indistinguishable in the 
beginning, FOInt(12) becomes unstable after approximately $20$ time steps and 
FOInt(24) after around $50$ time steps, while StrInt accurately approximates the 
original system over the whole time range of interest.
Even though the linear dynamics in FOInt(12) and FOInt(24) are asymptotically 
stable, these reduced-order models completely lack the underlying physical 
mechanical structure and they become unstable for the chosen input signal. On 
the other hand, by using one-sided projection, StrInt preserves all the 
mechanical (and physical) properties of the original system in terms of symmetry 
and definiteness of the system matrices, which then leads to the stable time 
simulation behavior in this case.
\Cref{fig:msd_freq_g1,fig:msd_freq_g2} show the approximation
results in the frequency domain for the first two transfer functions.
Comparing StrInt and FOInt(12), the structure-preserving approximation is orders
of magnitude better than the unstructured approximation of the same size.
StrInt and FOInt(24) behave mainly the same, while, for higher frequencies, we 
can observe a numerical drift-off of the unstructured approximation.

\begin{figure}[tb]
  \begin{center}
    \begin{subfigure}[t]{.49\textwidth}
      \begin{center}
  \tikzexternalenable%
  \tikzsetnextfilename{msd_freq_g2_err_so}%
  \filemodCmp{graphics/msd_freq_g2_err_so.tikz}{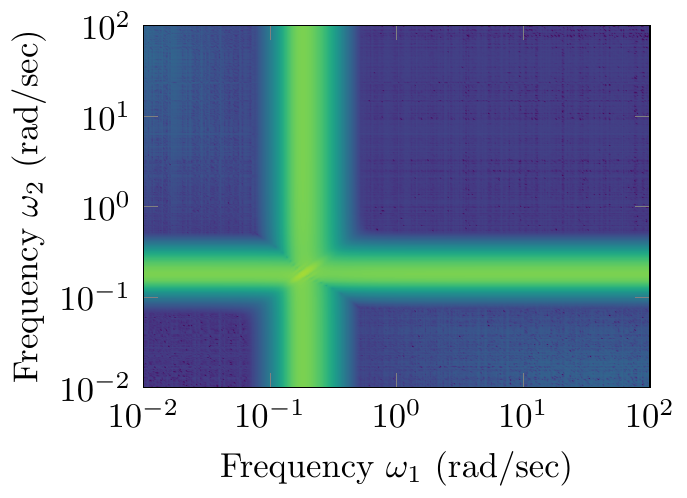}%
    {\tikzset{external/remake next}}{}%
  \begin{tikzpicture}
  \begin{loglogaxis}[
    view   = {0}{90},
    width  = .7\textwidth,
    height = .5\textwidth,
    scale only axis,
    axis on top,
    xmin   = 1e-2,
    xmax   = 1e+2,
    ymin   = 1e-2,
    ymax   = 1e+2,
    xtick  = {1e-2, 1e-1, 1e0, 1e+1, 1e+2},
    ytick  = {1e-2, 1e-1, 1e0, 1e+1, 1e+2},
    xminorticks = false,
    yminorticks = false,
    xlabel = {Frequency $\omega_{1}$ (rad/sec)},
    ylabel = {Frequency $\omega_{2}$ (rad/sec)},
    ylabel style = {yshift = -.3em},
    scaled x ticks = false,
    x tick label style = {/pgf/number format/fixed}]
        
      \addplot graphics[xmin = 1e-2, xmax = 1e+2, ymin = 1e-2, ymax = 1e+2]
        {graphics/data/msd_freq_g2_err_so.pdf};
            
  \end{loglogaxis}
\end{tikzpicture}%
  \tikzexternaldisable%

        \subcaption{StrInt.}
      \end{center}
    \end{subfigure}
    \hfill
    \begin{subfigure}[t]{.49\textwidth}
      \begin{center}
  \tikzexternalenable%
  \tikzsetnextfilename{msd_freq_g2_err_fo1}%
  \filemodCmp{graphics/msd_freq_g2_err_fo1.tikz}{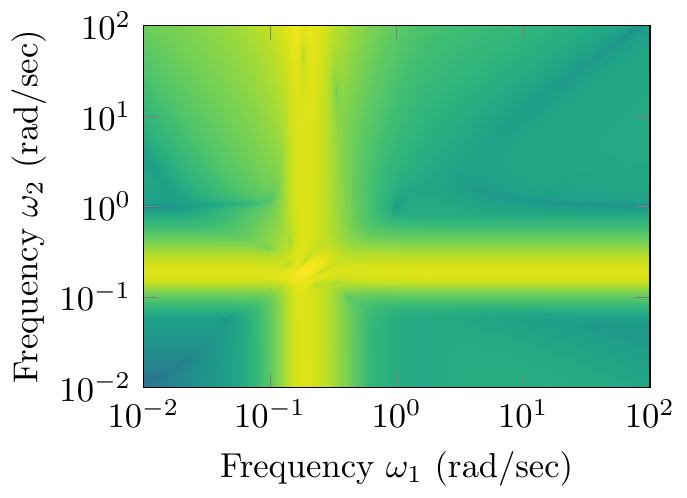}%
    {\tikzset{external/remake next}}{}%
  \begin{tikzpicture}
  \begin{loglogaxis}[
    view   = {0}{90},
    width  = .7\textwidth,
    height = .5\textwidth,
    scale only axis,
    axis on top,
    xmin   = 1e-2,
    xmax   = 1e+2,
    ymin   = 1e-2,
    ymax   = 1e+2,
    xtick  = {1e-2, 1e-1, 1e0, 1e+1, 1e+2},
    ytick  = {1e-2, 1e-1, 1e0, 1e+1, 1e+2},
    xminorticks = false,
    yminorticks = false,
    xlabel = {Frequency $\omega_{1}$ (rad/sec)},
    ylabel = {Frequency $\omega_{2}$ (rad/sec)},
    ylabel style = {yshift = -.3em},
    scaled x ticks = false,
    x tick label style = {/pgf/number format/fixed}]
        
      \addplot graphics[xmin = 1e-2, xmax = 1e+2, ymin = 1e-2, ymax = 1e+2]
        {graphics/data/msd_freq_g2_err_fo1.pdf};
            
  \end{loglogaxis}
\end{tikzpicture}%
  \tikzexternaldisable%

        \subcaption{FOInt(12).}
      \end{center}
    \end{subfigure}
    
    \begin{subfigure}[t]{.49\textwidth}
      \begin{center}
  \tikzexternalenable%
  \tikzsetnextfilename{msd_freq_g2_err_fo2}%
  \filemodCmp{graphics/msd_freq_g2_err_fo2.tikz}{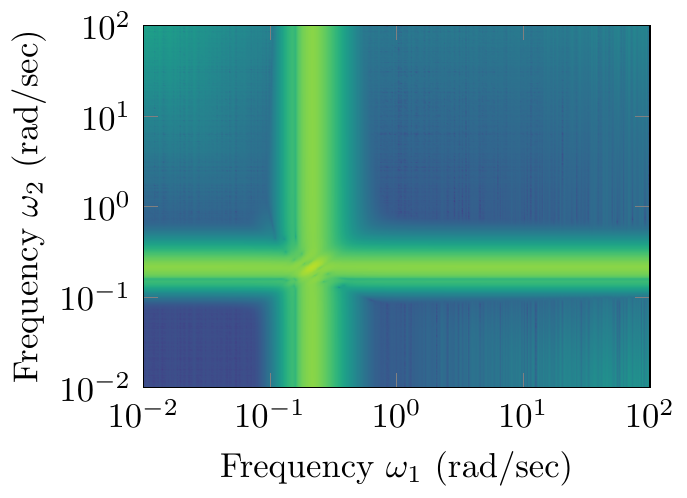}%
    {\tikzset{external/remake next}}{}%
  \begin{tikzpicture}
  \begin{loglogaxis}[
    view   = {0}{90},
    width  = .7\textwidth,
    height = .5\textwidth,
    scale only axis,
    axis on top,
    xmin   = 1e-2,
    xmax   = 1e+2,
    ymin   = 1e-2,
    ymax   = 1e+2,
    xtick  = {1e-2, 1e-1, 1e0, 1e+1, 1e+2},
    ytick  = {1e-2, 1e-1, 1e0, 1e+1, 1e+2},
    xminorticks = false,
    yminorticks = false,
    xlabel = {Frequency $\omega_{1}$ (rad/sec)},
    ylabel = {Frequency $\omega_{2}$ (rad/sec)},
    ylabel style = {yshift = -.3em},
    scaled x ticks = false,
    x tick label style = {/pgf/number format/fixed}]
        
      \addplot graphics[xmin = 1e-2, xmax = 1e+2, ymin = 1e-2, ymax = 1e+2]
        {graphics/data/msd_freq_g2_err_fo2.pdf};
            
  \end{loglogaxis}
\end{tikzpicture}%
  \tikzexternaldisable%

        \subcaption{FOInt(24).}
      \end{center}
    \end{subfigure}
    
    \vspace{.5\baselineskip}

  \tikzexternalenable%
  \tikzsetnextfilename{msd_freq_g2_legend}%
  \filemodCmp{graphics/msd_freq_g2_legend.tikz}{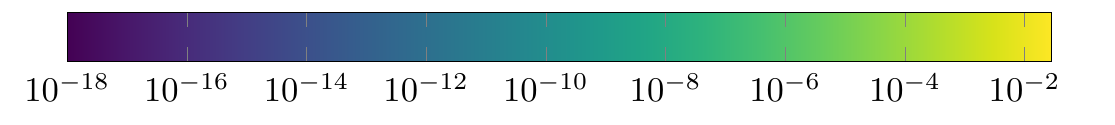}%
    {\tikzset{external/remake next}}{}%
  \begin{tikzpicture}
  \node[draw = none, minimum width = 0cm, inner sep = 0cm](start){};
  \node(leg) at (start.north east) [anchor = north west]{\tikz
  \begin{axis}[%
    hide axis,
    scale only axis,
    width  = 10cm,
    height = .1cm,
    point meta min = -18,
    point meta max = -1.5513,
    colorbar,
    colorbar horizontal,
    colorbar style = {
      xticklabel = $10^{\pgfmathparse{\tick}
        \pgfmathprintnumber\pgfmathresult}$,
      at = {(.5, 0)},
      anchor = north},
    scaled x ticks = false,
    x tick label style = {/pgf/number format/fixed}]
  \end{axis};};
  \node[draw = none, minimum width = .3cm, inner sep = 0cm](end)
    at (leg.north east) [anchor = north west]{};
\end{tikzpicture}%
  \tikzexternaldisable%

    \caption{Relative errors of the second transfer functions for the
      damped mass-spring system.}
    \label{fig:msd_freq_g2}
  \end{center}
\end{figure}

%%%%%%%%%%%%%%%%%%%%%%%%%%%%%%%%%%%%%%%%%%%%%%%%%%%%%%%%%%%%%%%%%%%%%%%%%%%%%%%%

\subsubsection{Time-delayed heated rod}%
\label{sec:siso_td}

This example, taken from~\cite{morGosPBetal19}, models a semi-dis\-cre\-tized 
heated rod  with distributed control and homogeneous Dirichlet boundary 
conditions, which is cooled by a delayed feedback and is described by the PDE
\begin{align*}
  \partial_{t} v(\zeta, t) & = \partial_{\zeta^{2}} v(\zeta, t) - 2\sin(\zeta)
    v(\zeta, t) + 2\sin(\zeta) v(\zeta, t - 1) + u(t),
\end{align*}
with $(\zeta, t) \in (0, \pi) \times (0, t_{f})$ and boundary conditions
$v(0, t) = v(\pi, t) = 0$ for $t \in [0, t_{f}]$.
After a spatial discretization using central finite differences, we obtain a
bilinear time-delay system of the form
\begin{align*}
  \dot{x}(t) & = A x(t) + A_{\rm{d}} x(t-1) + N x(t)u(t) + B u(t),\\
  y(t) & = C x(t),
\end{align*}
with $A, A_{\rm{d}}, N \in \R^{n \times n}$, $B, C^{\trans} \in \R^{n}$, and 
where we have chosen $n = 5\,000$ for our experiments.

To compare with our structure preserving approximation 
(StrInt), in this example, we use the approach from~\cite{morGosPBetal19}
to construct an unstructured bilinear system~\cref{eqn:bsys} without time-delay
using the bilinear Loewner framework, denoted by BiLoewner. 
For the structured interpolation, we have used the interpolation points $\pm
\texttt{logspace(-4, 4, 2)} \iu$ for the first transfer function and
$\pm \texttt{logspace(-2, 2, 2)} \iu$  for the
second transfer function with the two-sided projection approach from 
\Cref{thm:sisovw}.
The resulting reduced-order bilinear time-delay system has order $r = 8$.
For the bilinear Loewner method, we have chosen the interpolation points
$\pm \texttt{logspace(-4, 4, 80)} \iu$ and used the rank truncation idea to
obtain a classical (unstructured) bilinear system, also of order $8$.

\begin{figure}[tb]
  \begin{center}
    \begin{subfigure}[t]{.49\textwidth}
      \begin{center}
  \tikzexternalenable%
  \tikzsetnextfilename{time_delay_time_sim}%
  \filemodCmp{graphics/time_delay_time_sim.tikz}{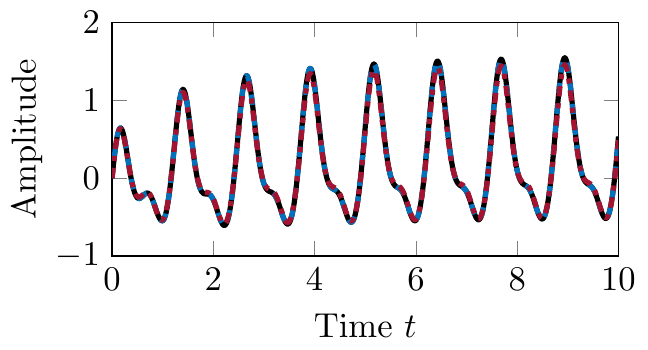}%
    {\tikzset{external/remake next}}{}%
  \begin{tikzpicture}[
  every axis/.append style = {
    scaled x ticks = false,
    x tick label style = {/pgf/number format/fixed},
    cycle list name = plotlist}
  ]
  \pgfplotstableread{graphics/data/time_delay_time_sim.dat}\tableSIM
  
  \begin{axis}[%
    width  = .7\textwidth,
    height = .1\textheight,
    scale only axis,
    xmin = 0,
    xmax = 10,
    ymin = -1,
    ymax = 2,
    xminorticks = false,
    yminorticks = false,
    xlabel = {Time $t$},
    ylabel = {Amplitude},
    ylabel style = {yshift = -.3em}]
        
    \addplot table[x index=0, y index=1] {\tableSIM};
    \addplot table[x index=0, y index=2] {\tableSIM};
    \addplot table[x index=0, y index=3] {\tableSIM};
  \end{axis}
\end{tikzpicture}%
  \tikzexternaldisable%

        \subcaption{Time response.}
      \end{center}
    \end{subfigure}
    \hfill
    \begin{subfigure}[t]{.49\textwidth}
      \begin{center}
  \tikzexternalenable%
  \tikzsetnextfilename{time_delay_time_err}%
  \filemodCmp{graphics/time_delay_time_err.tikz}{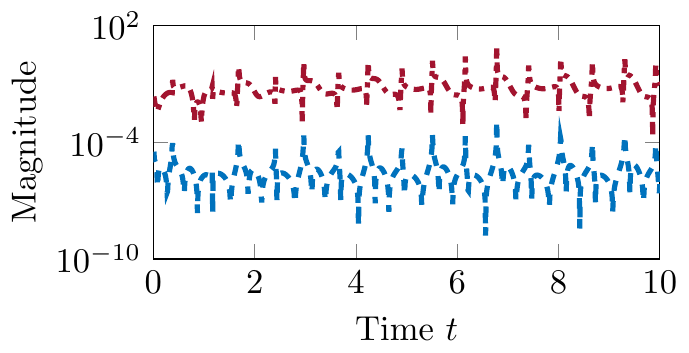}%
    {\tikzset{external/remake next}}{}%
  \begin{tikzpicture}[
  every axis/.append style = {
    scaled x ticks = false,
    x tick label style = {/pgf/number format/fixed},
    cycle list name = plotlist}
  ]
  \pgfplotstableread{graphics/data/time_delay_time_err.dat}\tableRELERR
  
  \begin{semilogyaxis}[%
    width  = .7\textwidth,
    height = .1\textheight,
    scale only axis,
    xmin = 0,
    xmax = 10,
    ymin = 1e-10,
    ymax = 1e+2,
    xminorticks = false,
    yminorticks = false,
    xlabel = {Time $t$},
    ylabel = {Magnitude},
    ylabel style = {yshift = -.3em}]
        
    \pgfplotsset{cycle list shift=1}
    \addplot table[x index=0, y index=1] {\tableRELERR};
    \addplot table[x index=0, y index=2] {\tableRELERR};
  \end{semilogyaxis}
\end{tikzpicture}%
  \tikzexternaldisable%

        \subcaption{Relative errors.}
      \end{center}
    \end{subfigure}
    \vspace{.5\baselineskip}

  \tikzexternalenable%
  \tikzsetnextfilename{time_delay_time_legend}%
  \filemodCmp{graphics/time_delay_time_legend.tikz}{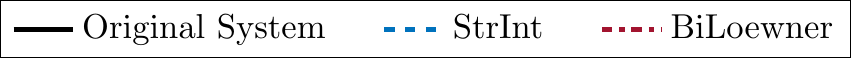}%
    {\tikzset{external/remake next}}{}%
  \begin{tikzpicture}[
  every axis/.append style = {
    scaled x ticks = false,
    x tick label style = {/pgf/number format/fixed},
    cycle list name = plotlist}
  ]
  
  \begin{axis}[%
    hide axis,
    scale only axis,
    width = 1mm,
    legend columns = 3, 
    legend style = {
      at     = {(0,0)},
      anchor = center,
      /tikz/every even column/.append style = {column sep = 0.5cm}}]
    \pgfplotsinvokeforeach{1,...,3}{\addplot coordinates {(0,0)};}
        
    \addlegendentry{Original System};
    \addlegendentry{StrInt};
    \addlegendentry{BiLoewner};
  \end{axis}
\end{tikzpicture}%
  \tikzexternaldisable%

    \caption{Time simulation results for the time-delay system.}
    \label{fig:time_delay_time}
  \end{center}
\end{figure}

\begin{figure}[tb]
  \begin{center}
    \begin{subfigure}[t]{.49\textwidth}
      \begin{center}
  \tikzexternalenable%
  \tikzsetnextfilename{time_delay_freq_g1_tf}%
  \filemodCmp{graphics/time_delay_freq_g1_tf.tikz}{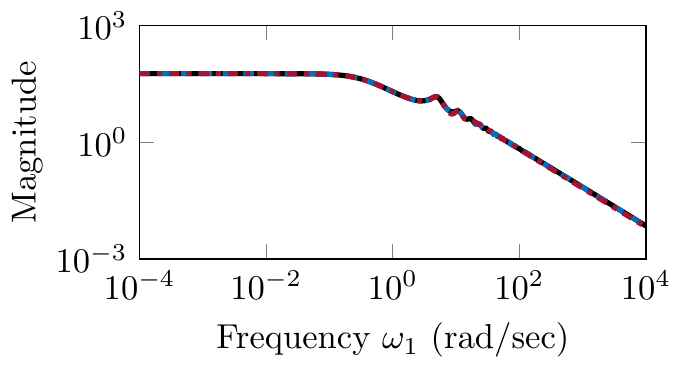}%
    {\tikzset{external/remake next}}{}%
  \begin{tikzpicture}[
  every axis/.append style = {
    scaled x ticks = false,
    x tick label style = {/pgf/number format/fixed},
    cycle list name = plotlist}
  ]
  \pgfplotstableread{graphics/data/time_delay_freq_g1_tf.dat}\tableTF
  
  \begin{loglogaxis}[%
    width  = .7\textwidth,
    height = .1\textheight,
    scale only axis,
    xmin = 1e-4,
    xmax = 1e+4,
    xtick = {1e-4, 1e-2, 1e+0, 1e+2, 1e+4},
    ymin = 1e-3,
    ymax = 1e+3,
    xminorticks = false,
    yminorticks = false,
    xlabel = {Frequency $\omega_{1}$ (rad/sec)},
    ylabel = {Magnitude},
    ylabel style = {yshift = -.3em}]
        
    \addplot table[x index=0, y index=1] {\tableTF};
    \addplot table[x index=0, y index=2] {\tableTF};
    \addplot table[x index=0, y index=3] {\tableTF};
  \end{loglogaxis}
\end{tikzpicture}%
  \tikzexternaldisable%

        \subcaption{Frequency response.}
      \end{center}
    \end{subfigure}
    \hfill
    \begin{subfigure}[t]{.49\textwidth}
      \begin{center}
  \tikzexternalenable%
  \tikzsetnextfilename{time_delay_freq_g1_err}%
  \filemodCmp{graphics/time_delay_freq_g1_err.tikz}{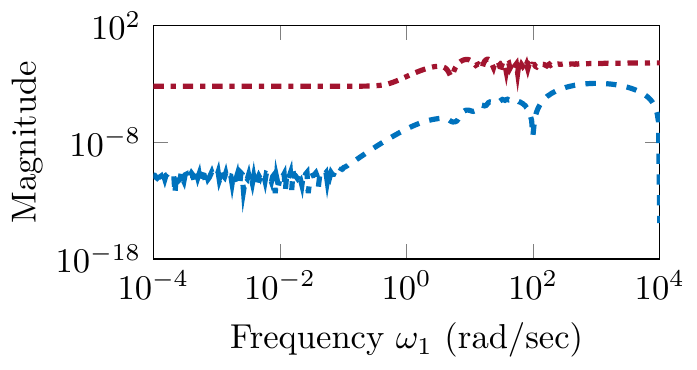}%
    {\tikzset{external/remake next}}{}%
  \begin{tikzpicture}[
  every axis/.append style = {
    scaled x ticks = false,
    x tick label style = {/pgf/number format/fixed},
    cycle list name = plotlist}
  ]
  \pgfplotstableread{graphics/data/time_delay_freq_g1_err.dat}\tableRELERR
  
  \begin{loglogaxis}[%
    width  = .7\textwidth,
    height = .1\textheight,
    scale only axis,
    xmin = 1e-4,
    xmax = 1e+4,
    xtick = {1e-4, 1e-2, 1e+0, 1e+2, 1e+4},
    ymin = 1e-18,
    ymax = 1e+2,
    xminorticks = false,
    yminorticks = false,
    xlabel = {Frequency $\omega_{1}$ (rad/sec)},
    ylabel = {Magnitude},
    ylabel style = {yshift = -.3em}]
    
    \pgfplotsset{cycle list shift=1}
    \addplot table[x index=0, y index=1] {\tableRELERR};
    \addplot table[x index=0, y index=2] {\tableRELERR};
  \end{loglogaxis}
\end{tikzpicture}%
  \tikzexternaldisable%

        \subcaption{Relative errors.}
      \end{center}
    \end{subfigure}
    \vspace{.5\baselineskip}

  \tikzexternalenable%
  \tikzsetnextfilename{time_delay_freq_g1_legend}%
  \filemodCmp{graphics/time_delay_freq_g1_legend.tikz}{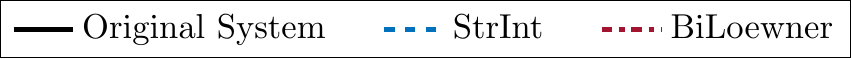}%
    {\tikzset{external/remake next}}{}%
  \begin{tikzpicture}[
  every axis/.append style = {
    scaled x ticks = false,
    x tick label style = {/pgf/number format/fixed},
    cycle list name = plotlist}
  ]
  
  \begin{axis}[%
    hide axis,
    scale only axis,
    width = 1mm,
    legend columns = 3, 
    legend style = {
      at     = {(0,0)},
      anchor = center,
      /tikz/every even column/.append style = {column sep = 0.5cm}}]
    \pgfplotsinvokeforeach{1,...,3}{\addplot coordinates {(0,0)};}
        
    \addlegendentry{Original System};
    \addlegendentry{StrInt};
    \addlegendentry{BiLoewner};
  \end{axis}
\end{tikzpicture}%
  \tikzexternaldisable%

    \caption{Frequency domain results of the first transfer functions for the
      time-delay system.}
    \label{fig:time_delay_freq_g1}
  \end{center}
\end{figure}

With the input signal
\begin{align*}
  u(t) & = \frac{\cos(10 t)}{20} + \frac{\cos(5 t)}{20},
\end{align*}
\Cref{fig:time_delay_time} shows that~\textbf{(a)} the output trajectories 
of the original system, the structure-preserving interpolation and the bilinear 
system without time-delay are indistinguishable in the eye ball 
norm~\textbf{(b)} but the relative error reveals that StrInt is several orders 
of magnitude better than BiLoewner while having the same state-space dimension.
The same behavior can be observed in the frequency domain for the first and
second transfer functions as shown in 
\Cref{fig:time_delay_freq_g1,fig:time_delay_freq_g2}, i.e., by preserving the 
special structure of the original system we obtain a significantly better 
approximation of the same size.

\begin{figure}[tb]
  \begin{center}
    \begin{subfigure}[t]{.49\textwidth}
      \begin{center}
  \tikzexternalenable%
  \tikzsetnextfilename{time_delay_freq_g2_err_rom}%
  \filemodCmp{graphics/time_delay_freq_g2_err_rom.tikz}{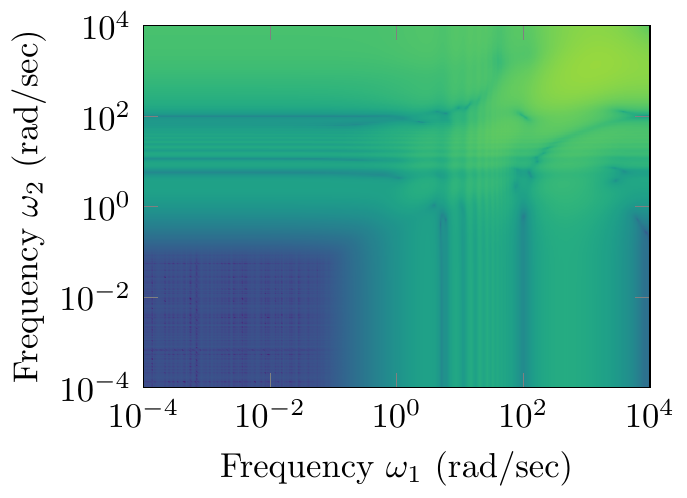}%
    {\tikzset{external/remake next}}{}%
  \begin{tikzpicture}
  \begin{loglogaxis}[
    view   = {0}{90},
    width  = .7\textwidth,
    height = .5\textwidth,
    scale only axis,
    axis on top,
    xmin   = 1e-4,
    xmax   = 1e+4,
    ymin   = 1e-4,
    ymax   = 1e+4,
    xtick  = {1e-4, 1e-2, 1e0, 1e+2, 1e+4},
    ytick  = {1e-4, 1e-2, 1e0, 1e+2, 1e+4},
    xlabel = {Frequency $\omega_{1}$ (rad/sec)},
    ylabel = {Frequency $\omega_{2}$ (rad/sec)},
    ylabel style = {yshift = -.3em},
    scaled x ticks = false,
    x tick label style = {/pgf/number format/fixed}]
        
      \addplot graphics[xmin = 1e-4, xmax = 1e+4, ymin = 1e-4, ymax = 1e+4]
        {graphics/data/time_delay_freq_g2_err_rom.pdf};
            
  \end{loglogaxis}
\end{tikzpicture}%
  \tikzexternaldisable%

        \subcaption{StrInt.}
      \end{center}
    \end{subfigure}
    \hfill
    \begin{subfigure}[t]{.49\textwidth}
      \begin{center}
  \tikzexternalenable%
  \tikzsetnextfilename{time_delay_freq_g2_err_lwn}%
  \filemodCmp{graphics/time_delay_freq_g2_err_lwn.tikz}{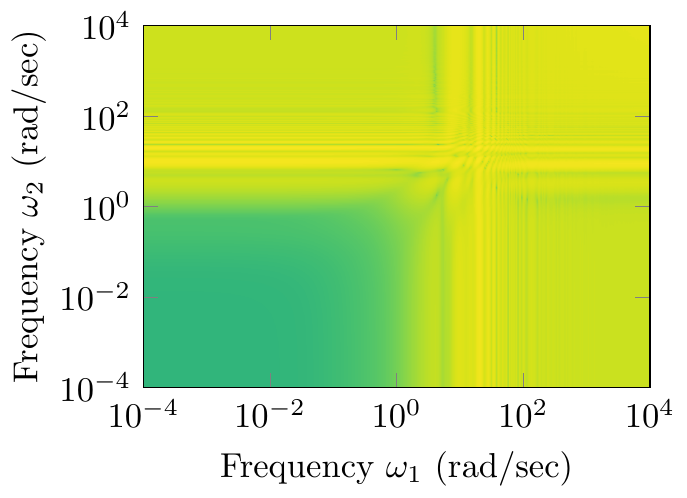}%
    {\tikzset{external/remake next}}{}%
  \begin{tikzpicture}
  \begin{loglogaxis}[
    view   = {0}{90},
    width  = .7\textwidth,
    height = .5\textwidth,
    scale only axis,
    axis on top,
    xmin   = 1e-4,
    xmax   = 1e+4,
    ymin   = 1e-4,
    ymax   = 1e+4,
    xtick  = {1e-4, 1e-2, 1e0, 1e+2, 1e+4},
    ytick  = {1e-4, 1e-2, 1e0, 1e+2, 1e+4},
    xlabel = {Frequency $\omega_{1}$ (rad/sec)},
    ylabel = {Frequency $\omega_{2}$ (rad/sec)},
    ylabel style = {yshift = -.3em},
    scaled x ticks = false,
    x tick label style = {/pgf/number format/fixed}]
        
      \addplot graphics[xmin = 1e-4, xmax = 1e+4, ymin = 1e-4, ymax = 1e+4]
        {graphics/data/time_delay_freq_g2_err_lwn.pdf};
            
  \end{loglogaxis}
\end{tikzpicture}%
  \tikzexternaldisable%

        \subcaption{BiLoewner.}
      \end{center}
    \end{subfigure}
    \vspace{.5\baselineskip}

  \tikzexternalenable%
  \tikzsetnextfilename{time_delay_freq_g2_legend}%
  \filemodCmp{graphics/time_delay_freq_g2_legend.tikz}{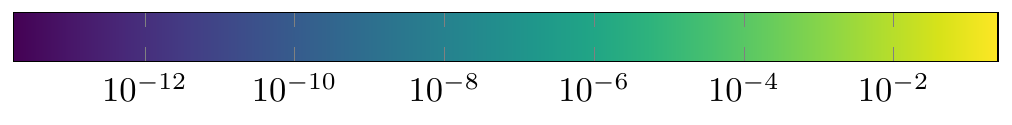}%
    {\tikzset{external/remake next}}{}%
  \begin{tikzpicture}
  \node[draw = none, minimum width = 0cm, inner sep = 0cm](start){};
  \node(leg) at (start.north east) [anchor = north west]{\tikz
  \begin{axis}[%
    hide axis,
    scale only axis,
    width  = 10cm,
    height = .1cm,
    point meta min = -13.7539,
    point meta max = -0.6086,
    colorbar,
    colorbar horizontal,
    colorbar style = {
      xticklabel = $10^{\pgfmathparse{\tick}
        \pgfmathprintnumber\pgfmathresult}$,
      at = {(.5, 0)},
      anchor = north},
    scaled x ticks = false,
    x tick label style = {/pgf/number format/fixed}]
  \end{axis};};
  \node[draw = none, minimum width = 0cm, inner sep = 0cm](end)
    at (leg.north east) [anchor = north west]{};
\end{tikzpicture}%
  \tikzexternaldisable%

    \caption{Relative errors of the second transfer functions for the
      time-delay system.}
    \label{fig:time_delay_freq_g2}
  \end{center}
\end{figure}

%%%%%%%%%%%%%%%%%%%%%%%%%%%%%%%%%%%%%%%%%%%%%%%%%%%%%%%%%%%%%%%%%%%%%%%%%%%%%%%%
% MIMO.                                                                        %
%%%%%%%%%%%%%%%%%%%%%%%%%%%%%%%%%%%%%%%%%%%%%%%%%%%%%%%%%%%%%%%%%%%%%%%%%%%%%%%%

\section{Interpolation of multi-input multi-output systems}%
\label{sec:mimo}

In this section, we will generalize the results from  SISO structured bilinear 
systems to MIMO ones as in~\cref{eqn:mimotf} and give a numerical example to 
illustrate the theory.

%%%%%%%%%%%%%%%%%%%%%%%%%%%%%%%%%%%%%%%%%%%%%%%%%%%%%%%%%%%%%%%%%%%%%%%%%%%%%%%%

\subsection{Matrix interpolation}

In principle, all the results from \Cref{sec:siso} can directly be
extended to the MIMO system case~\cref{eqn:mimotf}. However,
one needs to realize that in this case, the quantities to be interpolated, i.e., 
the subsystem transfer functions, are matrix-valued.
The main difference from the SISO case lies in the collection of the bilinear
matrices into $\cN(s) = \begin{bmatrix} \cN_{1}(s) & \ldots & \cN_{m} (s)
\end{bmatrix}$ and the corresponding Kronecker products that produce the
different combinations of the linear and bilinear parts in the $k$-th order
transfer functions, e.g., in~\cref{eqn:nokron}.
Additionally, we will use the following notation
\begin{align*}
  \tcN(s) := \begin{bmatrix} \cN_{1}(s) \\ \vdots \\ \cN_{m}(s) \end{bmatrix}
\end{align*}
as alternative way of concatenating the bilinear terms.
In this paper, we will only focus on matrix interpolation, i.e., we will 
interpolate the full matrix-valued structured subsystem transfer functions.
There is a concept of tangential interpolation~\cite{morGalVV04, morAntBG20} to 
handle matrix-valued functions in which interpolation is enforced only in 
selected directions.
We will consider that framework in a separate work since the definition of 
tangential interpolation is not unified yet for bilinear 
systems~\cite{morBenBD11, morRodGB18}, let alone the structured ones we consider 
here.

The following theorem extends the results from
\Cref{thm:sisov,thm:sisow,thm:sisovw} to MIMO structured bilinear systems.

\begin{theorem}[Matrix interpolation]%
  \label{thm:mimovw}
  Let $G$ be a bilinear system, as described by~\cref{eqn:mimotf}, and $\hG$
  the reduced-order bilinear system, constructed by~\cref{eqn:proj}.
  Given sets of interpolation points $\sigma_{1}, \ldots, \sigma_{k} \in \C$ and
  $\varsigma_{1}, \ldots, \varsigma_{\theta} \in \C$, for which the matrix 
  functions $\cC(s)$, $\cK(s)^{-1}$, $\cN(s)$, $\cB(s)$ are defined 
  and $\hcK(s)$ is full-rank, the following statements hold:
  \begin{enumerate}[label=(\alph*)] 
    \item If $V$ is constructed as
      \begin{align*}
        V_{1} & = \cK(\sigma_{1})^{-1}\cB(\sigma_{1}),\\
        V_{j} & = \cK(\sigma_{j})^{-1}\cN(\sigma_{j-1})
          (I_{m} \otimes V_{j-1}), & 2 \leq j \leq k,\\
        \mspan(V) & \supseteq \mspan\left([V_{1}, \ldots,
        V_{k}]\right),
      \end{align*}
      then the following interpolation conditions hold true:
      \begin{align*}
        \begin{aligned}
          G_{1}(\sigma_{1}) & =  \hG_{1}(\sigma_{1}),\\
          G_{2}(\sigma_{1},\sigma_{2}) & = \hG_{2}(\sigma_{1}, \sigma_{2}),\\
          & \,\,\,\vdots\\
          G_{k}(\sigma_{1}, \ldots, \sigma_{k}) &
            = \hG_{k}(\sigma_{1}, \ldots, \sigma_{k}).
        \end{aligned}
      \end{align*}
    \item If $W$ is constructed as
      \begin{align*}
        W_{1} & = \cK(\varsigma_{\theta})^{-\herm}\cC
          (\varsigma_{\theta})^{\herm},\\
        W_{i} & = \cK(\varsigma_{\theta-i+1})^{-\herm}
          \tcN(\varsigma_{k-i+1})^{\herm} (I_{m} \otimes W_{i-1}),
          & 2 \leq i \leq \theta,\\
        \mathrm{span}(W) & \supseteq \mathrm{span}\left([W_{1}, \ldots,
          W_{\theta}] \right),
      \end{align*}
      then the following interpolation conditions hold true:
      \begin{align*}
        \begin{aligned}
          G_{1}(\varsigma_{\theta}) & =  \widehat{G}_{1}(\varsigma_{\theta}),\\
          G_{2}(\varsigma_{\theta-1},\sigma_{\theta}) &
            = \widehat{G}_{2}(\varsigma_{\theta-1}, \sigma_{\theta}),\\
          & \,\,\,\vdots\\
          G_{\theta}(\varsigma_{1}, \ldots, \varsigma_{\theta}) &
            = \widehat{G}_{\theta}(\varsigma_{1}, \ldots, \varsigma_{\theta}).
        \end{aligned}
      \end{align*}
    \item Let $V$ be constructed as in part (a) and $W$ as in (b), then,
      additionally to the results in (a) and (b), the interpolation conditions
      \begin{align*}
        G_{q + \eta}(\sigma_{1}, \ldots, \sigma_{q},
          \varsigma_{\theta-\eta+1}, \ldots, \varsigma_{\theta}) & =
          \hG_{q + \eta}(\sigma_{1}, \ldots, \sigma_{q},
          \varsigma_{\theta-\eta+1}, \ldots, \varsigma_{\theta}),
      \end{align*}
      hold for $1 \leq q \leq k$ and  $1 \leq \eta \leq \theta$.
  \end{enumerate}
\end{theorem}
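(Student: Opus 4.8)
The plan is to reduce all three parts to the mechanism already used in the SISO proofs (\Cref{thm:sisov,thm:sisow,thm:sisovw}), adding only the bookkeeping forced by the Kronecker products that encode the different combinations of bilinear matrices. The core step is to rewrite $G_{k}$ and $\hG_{k}$, evaluated at the chosen interpolation points, in a \emph{compact form} in terms of recursively defined generalized reachability (resp.\ observability) matrices, and then to transfer the interpolation property one factor at a time by absorbing the oblique projectors onto $\mspan(V)$ and $\mspan(W)$, exactly as in the SISO setting.

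For part~(a), I would first unroll the product defining $\hG_{k}(\sigma_{1},\ldots,\sigma_{k})$ using the mixed-product property $(A\otimes B)(C\otimes D) = (AC)\otimes(BD)$ together with $I_{m^{j}} = I_{m^{j-1}}\otimes I_{m}$. This yields the telescoping identity $\hG_{k}(\sigma_{1},\ldots,\sigma_{k}) = \hcC(\sigma_{k})\widehat{V}_{k}$, where $\widehat{V}_{1} = \hcK(\sigma_{1})^{-1}\hcB(\sigma_{1})$ and $\widehat{V}_{j} = \hcK(\sigma_{j})^{-1}\hcN(\sigma_{j-1})(I_{m}\otimes\widehat{V}_{j-1})$ for $2\le j\le k$; running the same computation on the full-order model gives $G_{k}(\sigma_{1},\ldots,\sigma_{k}) = \cC(\sigma_{k})V_{k}$ with precisely the $V_{j}$ of the theorem. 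Since $\hcC(\sigma_{k}) = \cC(\sigma_{k})V$, it then suffices to prove $V\widehat{V}_{j} = V_{j}$ for $1\le j\le k$, which I would do by induction on $j$. The base case uses $\cB(\sigma_{1}) = \cK(\sigma_{1})V_{1}$ and the fact that $P_{\sigma_{1}} := V(W^{\herm}\cK(\sigma_{1})V)^{-1}W^{\herm}\cK(\sigma_{1})$ is (by the full-rank hypothesis on $\hcK$) an oblique projector onto $\mspan(V)$ with $V_{1}$ lying columnwise in $\mspan(V)$. The induction step rests on the identity $\hcN(s) = W^{\herm}\cN(s)(I_{m}\otimes V)$, which follows from $\hcN_{j}(s) = W^{\herm}\cN_{j}(s)V$ in~\cref{eqn:proj}; combined with the inductive hypothesis it gives $\hcN(\sigma_{j-1})(I_{m}\otimes\widehat{V}_{j-1}) = W^{\herm}\cN(\sigma_{j-1})(I_{m}\otimes V_{j-1}) = W^{\herm}\cK(\sigma_{j})V_{j}$, and applying $P_{\sigma_{j}}$ to $V_{j}\in\mspan(V)$ finishes the step.

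Part~(b) would be obtained by the same argument applied to the Hermitian transpose of the transfer functions, as in the passage from \Cref{thm:sisov} to \Cref{thm:sisow}: transposing $G_{k}$ reverses the order of the factors, so the interpolation points are consumed in reverse order and the recursion grows from the $\cC$-end. The one genuinely new point relative to the SISO case is that $\cN(s)^{\herm}$ is the \emph{vertically stacked} matrix, and after pulling the Kronecker factors through the transpose the natural way to reorganize the resulting block into a matrix with $n$ rows is exactly $\tcN(\varsigma)^{\herm}(I_{m}\otimes W_{i-1})$ --- this is the reshaping $[\,\cN_{1}(\varsigma)^{\herm}W_{i-1}\mid\cdots\mid\cN_{m}(\varsigma)^{\herm}W_{i-1}\,] = \tcN(\varsigma)^{\herm}(I_{m}\otimes W_{i-1})$, which is why $\tcN$ rather than $\cN$ appears in the construction of $W$. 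I expect this Kronecker/transpose bookkeeping to be the main obstacle: one has to verify that the unstacking is consistent with how the $I_{m^{j}}$ blocks compose at every level of the product, and the cleanest route is an induction on the transfer-function order mirroring the one in part~(a).

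Finally, part~(c) would follow the proof of \Cref{thm:sisovw} at the level of structure. Writing $\hG_{q+\eta}(\sigma_{1},\ldots,\sigma_{q},\varsigma_{\theta-\eta+1},\ldots,\varsigma_{\theta})$ in the compact form, the rightmost $q$ factors assemble, by part~(a), into a block lying columnwise in $\mspan(V)$, so one leading $W^{\herm}$ is absorbed; the leftmost $\eta$ factors assemble, by part~(b), into a block whose Hermitian transpose lies columnwise in $\mspan(W)$, so the matching $V$ is absorbed; and what is left is precisely the full-order quantity $G_{q+\eta}(\sigma_{1},\ldots,\sigma_{q},\varsigma_{\theta-\eta+1},\ldots,\varsigma_{\theta})$. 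Since no projector has to be peeled at the interface between the $\mspan(V)$-block and the $\mspan(W)$-block, the mixed conditions hold for all $1\le q\le k$ and $1\le\eta\le\theta$ without any compatibility requirement on the two point sets, exactly as in the SISO analysis.
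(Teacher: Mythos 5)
Your proposal is correct, and it runs on the same engine as the paper's proof---the recursively built reachability/observability blocks and the absorption of the oblique projectors $V\bigl(W^{\herm}\cK(\sigma)V\bigr)^{-1}W^{\herm}\cK(\sigma)$ onto $\mspan(V)$ (and their adjoints onto $\mspan(W)$)---but it is organized differently. The paper first multiplies out the Kronecker products, writes $G_{k}$ as the concatenation of the $m^{k-1}$ SISO-type terms $\cC\cK^{-1}\cN_{i_{k-1}}\cdots\cN_{i_{1}}\cK^{-1}\cB$ as in \cref{eqn:nokron}, observes that the columns of the blocks $V_{j}$ (resp.\ $W_{i}$, which is where $\tcN$ enters) are precisely the vectors demanded by \Cref{thm:sisov,thm:sisow,thm:sisovw} for every index combination, and then invokes those theorems entry-wise; part (c) is likewise delegated to \Cref{thm:sisovw} per entry. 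You instead keep the Kronecker structure intact, telescope the product to $\hG_{k}(\sigma_{1},\ldots,\sigma_{k})=\hcC(\sigma_{k})\widehat{V}_{k}$ via the mixed-product rule, and prove the block identity $V\widehat{V}_{j}=V_{j}$ by induction using $\hcN(s)=W^{\herm}\cN(s)(I_{m}\otimes V)$; the dual statement and the mixed conditions follow in the same block form. What your route buys is a self-contained argument that does not require verifying that each scalar column is literally an instance of the SISO construction; what it costs is exactly the Kronecker bookkeeping you flag (e.g., in part (c) the factor exposed at the interface is $I_{m^{\eta-1}}\otimes W^{\herm}$ rather than a single $W^{\herm}$, and one must check that the $I_{m^{j}}$ blocks compose consistently on the $W$-side). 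Both arguments are sound and establish the same conditions.
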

\begin{proof}
  Starting with part (a), we remember that the transfer functions can be
  rewritten by multiplying out the Kronecker products as
  \begin{align*}
    G_{k}(\sigma_{1}, \ldots, \sigma_{k}) & = [ \cC(\sigma_{k})
      \cK(\sigma_{k})^{-1}\cN_{1}(\sigma_{k-1}) \cdots \cN_{1}(\sigma_{1})
      \cK(\sigma_{1})^{-1}\cB(\sigma_{1})^{-1}, \\
    & ~~~~~ \cC(\sigma_{k})\cK(\sigma_{k})^{-1}\cN_{1}(\sigma_{k-1}) \cdots
      \cN_{2}(\sigma_{1})\cK(\sigma_{1})^{-1}\cB(\sigma_{1})^{-1},\\
    & ~~~~~ \cdots\\
    & ~~~~~ \cC(\sigma_{k})\cK(\sigma_{k})^{-1}\cN_{m}(\sigma_{k-1}) \cdots
      \cN_{m}(\sigma_{1})\cK(\sigma_{1})^{-1}\cB(\sigma_{1})^{-1}].
  \end{align*}
  From the construction of $V$, it follows that applying \Cref{thm:sisov}
  for the transfer functions in each single entry gives the result.
  Part (b) directly follows from part (a) by replacing the matrix functions
  by their Hermitian conjugate versions except for $\cN(s) = 
  \begin{bmatrix} \cN_{1} & \ldots & \cN_{m} \end{bmatrix}$, where the single 
  entries have to be transposed conjugated.
  Therefore, the differently stacked $\tcN(s)$ is used here to give
  $\tcN(s)^{\herm} = \begin{bmatrix} \cN_{1}(s)^{\herm} & \ldots &
  \cN_{m}(s)^{\herm} \end{bmatrix}$. Finally,
  Part (c) follows directly from part (a), (b) and \Cref{thm:sisovw}
  for the single transfer function entries.
\end{proof}

For Hermite interpolation as in
\Cref{thm:sisovhermite,thm:sisowhermite,thm:sisovwderivative}, a similar 
extension to the MIMO case follows.

\begin{theorem}[Hermite matrix interpolation]%
  \label{thm:mimovwderivative}
  Let $G$ be a bilinear system, described by~\cref{eqn:mimotf}, and $\hG$ the
  reduced-order bilinear system, constructed by~\cref{eqn:proj}.
  Given sets of interpolation points $\sigma_{1}, \ldots, \sigma_{k} \in \C$ and
  $\varsigma_{1}, \ldots, \varsigma_{\theta} \in \C$, for which the matrix 
  functions $\cC(s)$, $\cK(s)^{-1}$, $\cN(s)$, $\cB(s)$ are analytic and 
  $\hcK(s)$ is full-rank, the following statements hold:
  \begin{enumerate}[label=(\alph*)]
    \item If $V$ is constructed as
      \begin{align*}
        V_{1, j_{1}} & = \partial_{s^{j_{1}}} (\cK^{-1} \cB) (\sigma_{1}),
          & j_{1} & = 0, \ldots, \ell_{1},\\
        V_{2, j_{2}} & = \partial_{s^{j_{2}}} \cK^{-1} (\sigma_{2})
          \partial_{s^{\ell_{1}}} (\cN (I_{m} \otimes \cK^{-1} \cB))
          (\sigma_{1}),
          & j_{2} & = 0,\ldots,\ell_{2},\\
        & \,\,\,\vdots\\
        V_{k, j_{k}} & = \partial_{s^{j_{k}}} \cK^{-1} (\sigma_{k})
          \left( \prod\limits_{j = 1}^{k-2} \partial_{s^{\ell_{k -j}}}
          \big( (I_{m^{j-1}} \otimes \cN) (I_{m^{j}} \otimes \cK) \big)
          (\sigma_{k-j}) \right)\\
        & \quad{}\times{} \partial_{s^{\ell_{1}}}
          ((I_{m^{k-2}} \otimes \cN)(I_{m^{k-1}} \otimes \cK)
          (I_{m^{k-1}} \otimes \cB)) (\sigma_{1}),
          & j_{k} & = 0, \ldots, \ell_{k},\\
        \mspan(V) & \supseteq \mspan([V_{1,0}, \ldots, V_{k, \ell_{k}}]),
      \end{align*}
      then the following interpolation conditions hold true:
      \begin{align*}
        \partial_{s_{1}^{j_{1}}} G_{1} (\sigma_{1})
           & = \partial_{s_{1}^{j_{1}}} \hG_{1} (\sigma_{1}),
           & j_{1} = 0, \ldots, \ell_{1},\\
         & \,\,\,\vdots\\
         \partial_{s_{1}^{\ell_{1}} \cdots s_{k-1}^{\ell_{k-1}} s_{k}^{j_{k}}}
           G_{k} (\sigma_{1}, \ldots, \sigma_{k})
           & = \partial_{s_{1}^{\ell_{1}} \cdots s_{k-1}^{\ell_{k-1}}
           s_{k}^{j_{k}}} \hG_{k} (\sigma_{1}, \ldots, \sigma_{k}),
           & j_{k} = 0,\ldots,\ell_{k}.
      \end{align*}
    \item If $W$ is constructed as
      \begin{align*}
        W_{1, i_{\theta}} & = \partial_{s^{i_{\theta}}} (\cK^{-\herm}
          \cC^{\herm}) (\varsigma_{\theta}),
          & i_{\theta} & = 0,\ldots,\nu_{\theta},\\
        W_{2, i_{\theta-1}} & = \partial_{s^{i_{\theta-1}}} (\cK^{-\herm}
          \tcN^{\herm}) (\varsigma_{\theta-1})
          \left( I_{m} \otimes \partial_{s^{\nu_{\theta}}} (\cK^{-\herm}
          \cC^{\herm}) (\varsigma_{\theta}) \right),
        & i_{\theta-1} & = 0,\ldots,\nu_{\theta-1},\\
        & \,\,\,\vdots\\
        W_{\theta, i_{1}} & = \partial_{s^{i_{1}}} (\cK^{-\herm} \tcN^{\herm})
          (\varsigma_{1}) \left( \prod\limits_{i = 2}^{\theta - 1}
          \partial_{s^{\nu_{i}}} (I_{m^{i-1}} \otimes \cK^{-\herm} \tcN^{\herm})
          (\varsigma_{i}) \right)\\
        & \quad{}\times{} \left( I_{m^{\theta-1}} \otimes
          \partial_{s^{\nu_{\theta}}} (\cK^{-\herm} \cC^{\herm})
          (\varsigma_{\theta}) \right),
          & i_{1} & = 0, \ldots, \nu_{1},\\
        \mathrm{span}(W) & \supseteq \mspan([W_{1,0}, \ldots, 
          W_{\theta, \nu_{\theta}}]),
      \end{align*}
      then the following interpolation conditions hold true:
      \begin{align*}
        \partial_{s_{1}^{i_{\theta}}} G_{1} (\varsigma_{\theta})
          & = \partial_{s_{1}^{i_{\theta}}} \hG_{1} (\varsigma_{\theta}),
          & i_{\theta} & = 0, \ldots, \nu_{\theta},\\
        & \,\,\,\vdots\\
        \partial_{s_{1}^{i_{1}} s_{2}^{\nu_{2}} \cdots
          s_{\theta}^{\nu_{\theta}}} G_{\theta}
          (\varsigma_{1}, \ldots, \varsigma_{\theta})
          & = \partial_{s_{1}^{i_{1}} s_{2}^{\nu_{2}} \cdots
          s_{\theta}^{\nu_{\theta}}} \hG_{\theta}
          (\varsigma_{1}, \ldots, \varsigma_{\theta}),
          & i_{1} & = 0,\ldots,\nu_{1}.
      \end{align*}
    \item Let $V$ be constructed as in part (a) and $W$ as in part (b), then,
      additionally to the results in (a) and (b), the conditions
      \begin{align*}
        & \partial_{s_{1}^{\ell_{1}} \cdots s_{q-1}^{\ell_{q-1}}
          s_{q}^{j_{q}} s_{q+1}^{i_{\theta - \eta + 1}}
          s_{q+2}^{\nu_{\theta - \eta + 2}} \cdots s_{q + \eta}^{\nu_{\theta}}}
          G_{q + \eta} (\sigma_{1}, \ldots, \sigma_{q},
          \varsigma_{\theta - \eta + 1}, \ldots, \varsigma_{\theta})\\
        & = \partial_{s_{1}^{\ell_{1}} \cdots s_{q-1}^{\ell_{q-1}}
          s_{q}^{j_{q}} s_{q+1}^{i_{\theta - \eta + 1}}
          s_{q+2}^{\nu_{\theta - \eta + 2}} \cdots s_{q + \eta}^{\nu_{\theta}}}
          \hG_{q + \eta} (\sigma_{1}, \ldots, \sigma_{q},
          \varsigma_{\theta - \eta + 1}, \ldots, \varsigma_{\theta}),
      \end{align*}
      hold for $j_{q} = 0, \ldots, \ell_{q}$; $i_{\theta - \eta + 1} = 0, \ldots,
      \nu_{\theta - \eta + 1}$; $1 \leq q \leq k$ and $1 \leq \eta \leq \theta$.
  \end{enumerate}
\end{theorem}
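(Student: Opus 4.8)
The strategy is the one already used in the proof of \Cref{thm:mimovw}: reduce every statement to its scalar counterpart from \Cref{thm:sisovhermite,thm:sisowhermite,thm:sisovwderivative} by multiplying out the Kronecker products that generate the individual entries of the matrix-valued subsystem transfer functions. Concretely, for part~(a) I would first expand~\cref{eqn:mimotf} into a block row indexed by tuples $(p_{1}, \ldots, p_{k-1}) \in \{1, \ldots, m\}^{k-1}$, whose $(p_{1}, \ldots, p_{k-1})$-block equals
\begin{align*}
  \cC(\sigma_{k}) \cK(\sigma_{k})^{-1} \cN_{p_{k-1}}(\sigma_{k-1})
    \cK(\sigma_{k-1})^{-1} \cdots \cN_{p_{1}}(\sigma_{1}) \cK(\sigma_{1})^{-1}
    \cB(\sigma_{1}),
\end{align*}
exactly as in~\cref{eqn:nokron}. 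Each such block has the SISO form~\cref{eqn:sisotf} (with a fixed choice of inserted bilinear terms and with $\cB(\sigma_{1})$ playing the role of a block of columns). The Kronecker-structured recursion defining $V_{1}, \ldots, V_{k}$ produces, columnwise, precisely the vectors that the scalar construction of \Cref{thm:sisovhermite} would build for every one of these blocks simultaneously; the factor $I_{m} \otimes V_{j-1}$ is exactly what is needed to propagate every partial right-hand side through all $m$ choices of $\cN_{p_{j}}$. Hence $\mspan(V)$ contains all of these vectors together with their required partial derivatives, and the oblique-projector argument of \Cref{thm:sisovhermite}, applied blockwise, yields the claimed interpolation of $\partial_{s_{1}^{\ell_{1}} \cdots s_{k-1}^{\ell_{k-1}} s_{k}^{j_{k}}} G_{k}$.

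For part~(b) I would dualize: replacing each matrix function by its Hermitian conjugate turns the block products of part~(a) into their transposed-conjugate versions, and the only subtlety -- exactly as noted in the proof of \Cref{thm:mimovw} -- is that $\cN(s) = \begin{bmatrix} \cN_{1}(s) & \ldots & \cN_{m}(s) \end{bmatrix}$ conjugates blockwise, so the correct object to iterate on the left is the alternative stacking $\tcN(s)$, which satisfies $\tcN(s)^{\herm} = \begin{bmatrix} \cN_{1}(s)^{\herm} & \ldots & \cN_{m}(s)^{\herm} \end{bmatrix}$. With this replacement, the construction of $W_{1}, \ldots, W_{\theta}$ is the blockwise Hermitian-conjugate mirror of part~(a); the interpolation points are used in reversed order and the partial-derivative orders are reversed as well, so \Cref{thm:sisowhermite} applied blockwise gives the assertion. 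Part~(c) then follows by combining (a) and (b) exactly as \Cref{thm:sisovwderivative} combines \Cref{thm:sisovhermite,thm:sisowhermite}: for each block of the mixed transfer function $G_{q+\eta}$, the left factor up to the split point lies in $\mspan(W)$ by the $W$-construction, the right factor lies in $\mspan(V)$ by the $V$-construction, and the product-rule expansion of the mixed partial derivative only produces further terms of these same two types, so each term is absorbed by one of the two projectors.

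The main obstacle I anticipate is the Kronecker/Leibniz bookkeeping in part~(a) for $k \geq 3$: one must check that differentiating the Kronecker-structured products $(I_{m^{j-1}} \otimes \cN)(I_{m^{j}} \otimes \cK^{-1})$ appearing in the recursion and then invoking the product rule really does reproduce, block by block, the finite sum of scalar terms that \Cref{thm:sisovhermite} attaches to each entry -- i.e.\ that $\partial_{s}$ commutes through the $I_{m^{j}} \otimes (\cdot)$ factors in the expected way and that no cross terms between different $\cN_{p_{j}}$ blocks are created or lost. Once this is verified, the remainder is a routine blockwise application of the SISO theorems, and the bilinear second-order and time-delay systems of \Cref{sec:2ndorderexm,sec:time-delay-models} are then covered as special cases.
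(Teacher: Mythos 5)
Your proposal is correct and follows essentially the same route as the paper, whose proof of this theorem is simply the remark that the results follow from \Cref{thm:sisovhermite,thm:sisowhermite,thm:sisovwderivative} with the same blockwise Kronecker-expansion argumentation as in \Cref{thm:mimovw}; you merely spell out that reduction (including the role of $\tcN$ in part (b) and the $\mspan(W)$/$\mspan(V)$ splitting in part (c)) in more detail than the paper does.
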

\begin{proof}
  The results follow directly from
  \Cref{thm:sisovhermite,thm:sisowhermite,thm:sisovwderivative} with the same 
  argumentation as in \Cref{thm:mimovw}.
\end{proof}

For completeness, also the implicit interpolation results are stated in
the following corollary without additional proofs.

\begin{corollary}[Two-sided matrix interpolation with identical point sets]
  Let $G$ be a bilinear system, described by~\cref{eqn:mimotf}, and $\hG$ the
  reduced-order bilinear system, constructed by~\cref{eqn:proj}.
  Given a set of interpolation points $\sigma_{1}, \ldots, \sigma_{k} \in \C$, 
  for which the matrix functions $\cC(s)$, $\cK(s)^{-1}$, $\cN(s)$, $\cB(s)$ are
  analytic and $\hcK(s)$ is full-rank, the following statements hold:
  \begin{enumerate}[label=(\alph*)]
    \item Let $V$ and $W$ be constructed as in \Cref{thm:mimovw} (a)
      and (b) for the interpolation points $\sigma_{1}, \ldots, \sigma_{k}$,
      then additionally it holds
      \begin{align*}
        \nabla G_{k}(\sigma_{1}, \ldots, \sigma_{k})
          & = \nabla \hG_{k}(\sigma_{1}, \ldots, \sigma_{k}).
      \end{align*}
    \item Let $V$ and $W$ be constructed as in
      \Cref{thm:mimovwderivative} (a) and (b) for the interpolation
      points $\sigma_{1}, \ldots, \sigma_{k}$, then additionally it holds
      \begin{align*}
        \nabla \left( \partial_{s_{1}^{\ell_{1}} \cdots s_{k}^{\ell_{k}}}
          G_{k} (\sigma_{1}, \ldots, \sigma_{k}) \right)
          & = \nabla \left( \partial_{s_{1}^{\ell_{1}} \cdots s_{k}^{\ell_{k}}}
          \hG_{k} (\sigma_{1}, \ldots, \sigma_{k}) \right).
      \end{align*}
  \end{enumerate}
\end{corollary}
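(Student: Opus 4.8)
The plan is to reduce the corollary entirely to the SISO results already available --- \Cref{thm:sisovwhermite} for part (a) and \Cref{cor:tothm} for part (b) --- by reusing the Kronecker-expansion device from the proof of \Cref{thm:mimovw}. Multiplying out all Kronecker products, each MIMO subsystem transfer function $G_{k}(s_{1},\ldots,s_{k})$ becomes a block row whose entries are scalar-structured transfer functions of the form \cref{eqn:sisotf}, except that the single bilinear function is replaced, stage by stage, by the components $\cN_{i_{1}},\ldots,\cN_{i_{k-1}}$ according to a multi-index $(i_{1},\ldots,i_{k-1}) \in \{1,\ldots,m\}^{k-1}$; the one-sided and two-sided SISO interpolation proofs go through verbatim in this slightly more general stagewise situation, as is already implicitly used in \Cref{thm:mimovw}. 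Since $\nabla$ acts block-entrywise, it suffices to establish each claimed identity for every such entry separately.

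For part (a), expanding the Kronecker products in the construction of $V$ in \Cref{thm:mimovw}~(a) shows that, for every multi-index, the right chain
\[
  \cK(\sigma_{1})^{-1}\cB(\sigma_{1}),\quad
  \cK(\sigma_{2})^{-1}\cN_{i_{1}}(\sigma_{1})\cK(\sigma_{1})^{-1}\cB(\sigma_{1}),\quad
  \ldots
\]
produced by \Cref{thm:sisov} for the corresponding scalar entry lies in $\mspan(V)$. Likewise, expanding the construction of $W$ in \Cref{thm:mimovw}~(b) --- where the alternative stacking $\tcN$ must be used, exactly as in the proof of \Cref{thm:mimovw} --- puts the matching left chain from \Cref{thm:sisow} into $\mspan(W)$. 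Thus, for each entry, $V$ and $W$ are built as in \Cref{thm:sisov,thm:sisow} for the one common point set $\sigma_{1},\ldots,\sigma_{k}$, so \Cref{thm:sisovwhermite} applies entrywise and yields $\nabla G_{k}(\sigma_{1},\ldots,\sigma_{k}) = \nabla\hG_{k}(\sigma_{1},\ldots,\sigma_{k})$.

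Part (b) follows the same route: after the Kronecker expansion, the derivative-augmented bases of \Cref{thm:mimovwderivative}~(a) and~(b) restrict, on each entry, to exactly the Hermite chains of \Cref{thm:sisovhermite} and \Cref{thm:sisowhermite} for that scalar entry, with the common interpolation points $\sigma_{1},\ldots,\sigma_{k}$ and the matching derivative orders $\ell_{1}=\nu_{1},\ldots,\ell_{k}=\nu_{k}$. Invoking \Cref{cor:tothm} entrywise then gives $\nabla\big(\partial_{s_{1}^{\ell_{1}}\cdots s_{k}^{\ell_{k}}} G_{k}(\sigma_{1},\ldots,\sigma_{k})\big) = \nabla\big(\partial_{s_{1}^{\ell_{1}}\cdots s_{k}^{\ell_{k}}} \hG_{k}(\sigma_{1},\ldots,\sigma_{k})\big)$, as claimed.

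The main obstacle is the combinatorial bookkeeping that makes the Kronecker expansion in part (b) rigorous: the factors $\partial_{s^{\ell}}\big((I_{m^{j-1}}\otimes\cN)(I_{m^{j}}\otimes\cK)\big)(\sigma)$ appearing in the $V$-construction of \Cref{thm:mimovwderivative} must be expanded by the Leibniz rule into a sum of Kronecker-structured terms, and one has to check column by column that, after multiplying out, every summand reproduces precisely one of the mixed-order partial-derivative chains that \Cref{thm:sisovhermite} (respectively \Cref{thm:sisowhermite} for $W$) would generate for the scalar entry indexed by the relevant multi-index. This is the same identification underlying the proof of \Cref{thm:mimovw}, now carried through the product rule; once it is in place, the SISO corollary supplies everything else.
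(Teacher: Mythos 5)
Your proposal is correct and follows exactly the route the paper intends: the corollary is stated ``without additional proofs'' precisely because it reduces, via the Kronecker expansion already used in the proof of \Cref{thm:mimovw}, to an entrywise application of \Cref{thm:sisovwhermite} for part (a) and \Cref{cor:tothm} for part (b). Your write-up simply supplies the bookkeeping details that the paper leaves implicit.
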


%%%%%%%%%%%%%%%%%%%%%%%%%%%%%%%%%%%%%%%%%%%%%%%%%%%%%%%%%%%%%%%%%%%%%%%%%%%%%%%%

\subsection{Numerical example}

We  illustrate the matrix interpolation results in a numerical example.
The experiments reported here have been executed on the same machine and with 
the same MATLAB version as in \Cref{sec:sisoexamples}.

We reconsider the damped mass-spring system example from 
\Cref{sec:siso_msd} with the following modifications:
The mass, damping and stiffness matrices from~\cref{eqn:msd} stay unchanged.
The input forces are now applied to the first and last masses, i.e., the input
term becomes $B_{\rm{u}} = \begin{bmatrix} e_{1}, -e_{n} \end{bmatrix}$, and
we observe the displacement of the second and fifth masses, which
gives the output matrix $C_{\rm{p}} = \begin{bmatrix} e_{2}, e_{5} 
\end{bmatrix}^{\trans}$.
Therefore, we have $2$ inputs and outputs. 
We consider the same idea of bilinear springs as before but working in
different directions, i.e., we have
\begin{align*}
  \begin{aligned}
    N_{\rm{p},1} & = -S_{1}KS_{1} && \text{and} & N_{\rm{p},2}& = S_{2}KS_{2},
  \end{aligned}
\end{align*}
where $S_{1}$ is chosen, as before, as diagonal matrix with
\texttt{linspace(0.2, 0, n)}, and $S_{2}$ is chosen to be a diagonal matrix with 
\texttt{linspace(0, 0.2, n)} as entries.
Overall, we have a damped mass-spring system of the form
\begin{align} \label{eqn:msd_mimo}
  \begin{aligned}
    M\ddot{x}(t) + D\dot{x}(t) + Kx(t) & = N_{\rm{p},1}x(t)u_{1}(t) +
      N_{\rm{p},2}x(t)u_{2}(t) + B_{\rm{u}}u(t),\\
    y(t) & = C_{\rm{p}}x(t),
  \end{aligned}
\end{align}
with $n = 1\,000$ masses for our experiments.

\begin{figure}[tb]
  \begin{center}
    \begin{subfigure}[t]{.49\textwidth}
      \begin{center}
  \tikzexternalenable%
  \tikzsetnextfilename{mimo_msd_time_sim}%
  \filemodCmp{graphics/mimo_msd_time_sim.tikz}{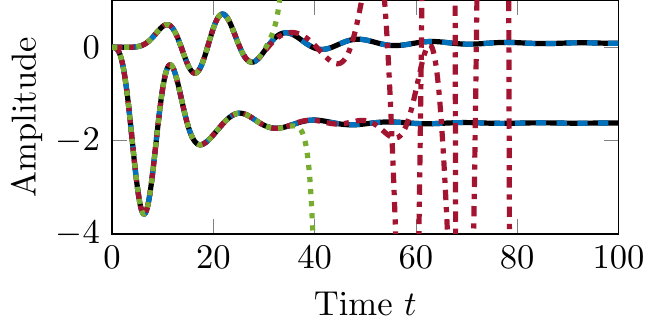}%
    {\tikzset{external/remake next}}{}%
  \begin{tikzpicture}[
  every axis/.append style = {
    scaled x ticks = false,
    x tick label style = {/pgf/number format/fixed},
    cycle list name = plotlist}
  ]
  \pgfplotstableread{graphics/data/mimo_msd_time_sim.dat}\tableSIM
  
  \begin{axis}[%
    width  = .7\textwidth,
    height = .1\textheight,
    scale only axis,
    xmin = 0,
    xmax = 100,
    restrict y to domain = -7:4,
    ymin = -4,
    ymax = 1,
    xminorticks = false,
    yminorticks = false,
    xlabel = {Time $t$},
    ylabel = {Amplitude},
    ylabel style = {yshift = -.3em}]
        
    % First output.
    \addplot table[x index = 0, y index=1] {\tableSIM};
    \addplot table[x index = 0, y index=3] {\tableSIM};
    \addplot table[x index = 0, y index=5] {\tableSIM};
    \addplot table[x index = 0, y index=7] {\tableSIM};
    
    % Second output.
    \addplot table[x index=0, y index=2] {\tableSIM};
    \addplot table[x index=0, y index=4] {\tableSIM};
    \addplot table[x index=0, y index=6] {\tableSIM};
    \addplot table[x index=0, y index=8] {\tableSIM};
  \end{axis}
\end{tikzpicture}%
  \tikzexternaldisable%

        \subcaption{Time response.}
        \label{fig:mimo_msd_time_sim}
      \end{center}
    \end{subfigure}
    \hfill
    \begin{subfigure}[t]{.49\textwidth}
      \begin{center}
  \tikzexternalenable%
  \tikzsetnextfilename{mimo_msd_time_err}%
  \filemodCmp{graphics/mimo_msd_time_err.tikz}{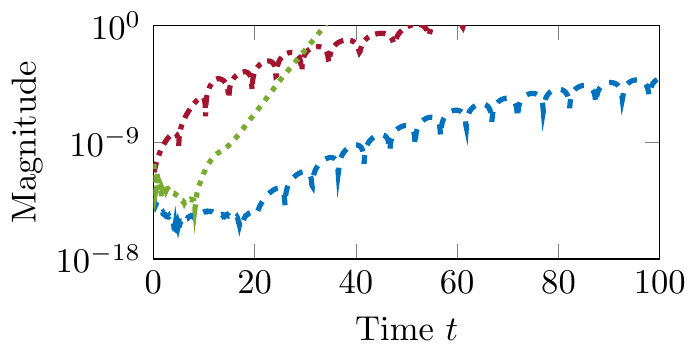}%
    {\tikzset{external/remake next}}{}%
  \begin{tikzpicture}[
  every axis/.append style = {
    scaled x ticks = false,
    x tick label style = {/pgf/number format/fixed},
    cycle list name = plotlist}
  ]
  \pgfplotstableread{graphics/data/mimo_msd_time_err.dat}\tableRELERR
  
  \begin{semilogyaxis}[%
    width  = .7\textwidth,
    height = .1\textheight,
    scale only axis,
    xmin = 0,
    xmax = 100,
    ymin = 1e-18,
    ymax = 1,
    xminorticks = false,
    yminorticks = false,
    xlabel = {Time $t$},
    ylabel = {Magnitude},
    ylabel style = {yshift = -.3em}]
    
    \pgfplotsset{cycle list shift = 1}
    \addplot table[x index = 0, y index = 1] {\tableRELERR};
    \addplot table[x index = 0, y index = 2] {\tableRELERR};
    \addplot table[x index = 0, y index = 3] {\tableRELERR};
  \end{semilogyaxis}
\end{tikzpicture}%
  \tikzexternaldisable%

        \subcaption{Relative errors.}
      \end{center}
    \end{subfigure}
    \vspace{.5\baselineskip}

  \tikzexternalenable%
  \tikzsetnextfilename{mimo_msd_time_legend}%
  \filemodCmp{graphics/mimo_msd_time_legend.tikz}{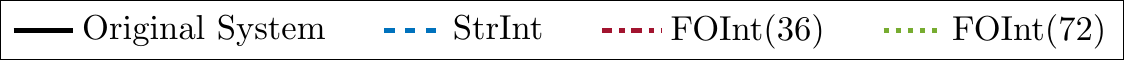}%
    {\tikzset{external/remake next}}{}%
  \begin{tikzpicture}[
  every axis/.append style = {
    scaled x ticks = false,
    x tick label style = {/pgf/number format/fixed},
    cycle list name = plotlist}
  ]
  
  \begin{axis}[%
    hide axis,
    scale only axis,
    width = 1mm,
    legend columns = 4, 
    legend style = {
      at     = {(0,0)},
      anchor = center,
      /tikz/every even column/.append style = {column sep = 0.5cm}}]
    \pgfplotsinvokeforeach{1,...,4}{\addplot coordinates {(0,0)};}
    
    \addlegendentry{Original System};
    \addlegendentry{StrInt};
    \addlegendentry{FOInt(36)};
    \addlegendentry{FOInt(72)};
  \end{axis}
\end{tikzpicture}%
  \tikzexternaldisable%

    \caption{Time simulation results for the MIMO damped mass-spring system.}
    \label{fig:mimo_msd_time}
  \end{center}
\end{figure}

\begin{figure}[tb]
  \begin{center}
    \begin{subfigure}[t]{.49\textwidth}
      \begin{center}
  \tikzexternalenable%
  \tikzsetnextfilename{mimo_msd_freq_g1_tf}%
  \filemodCmp{graphics/mimo_msd_freq_g1_tf.tikz}{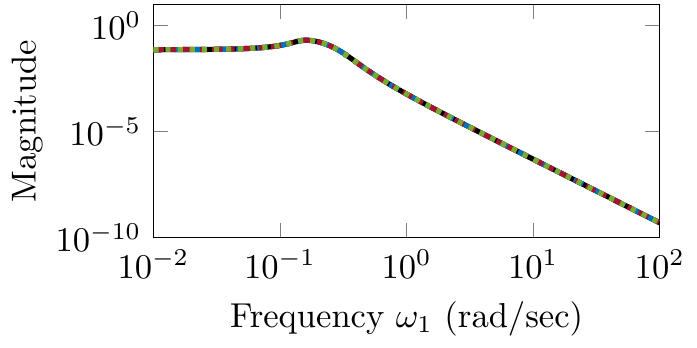}%
    {\tikzset{external/remake next}}{}%
  \begin{tikzpicture}[
  every axis/.append style = {
    scaled x ticks = false,
    x tick label style = {/pgf/number format/fixed},
    cycle list name = plotlist}
  ]
  \pgfplotstableread{graphics/data/mimo_msd_freq_g1_tf.dat}\tableTF

  \begin{loglogaxis}[%
    width  = .7\textwidth,
    height = .1\textheight,
    scale only axis,
    xmin = 1e-2,
    xmax = 1e+2,
    ymin = 1e-10,
    ymax = 1e+1,
    xminorticks = false,
    yminorticks = false,
    xlabel = {Frequency $\omega_{1}$ (rad/sec)},
    ylabel = {Magnitude},
    ylabel style = {yshift = -.3em}]
        
    \addplot table[x index = 0, y index = 1] {\tableTF};
    \addplot table[x index = 0, y index = 2] {\tableTF};
    \addplot table[x index = 0, y index = 3] {\tableTF};
    \addplot table[x index = 0, y index = 4] {\tableTF};
  \end{loglogaxis}
\end{tikzpicture}%
  \tikzexternaldisable%

        \subcaption{Frequency response.}
      \end{center}
    \end{subfigure}
    \hfill
    \begin{subfigure}[t]{.49\textwidth}
      \begin{center}
  \tikzexternalenable%
  \tikzsetnextfilename{mimo_msd_freq_g1_err}%
  \filemodCmp{graphics/mimo_msd_freq_g1_err.tikz}{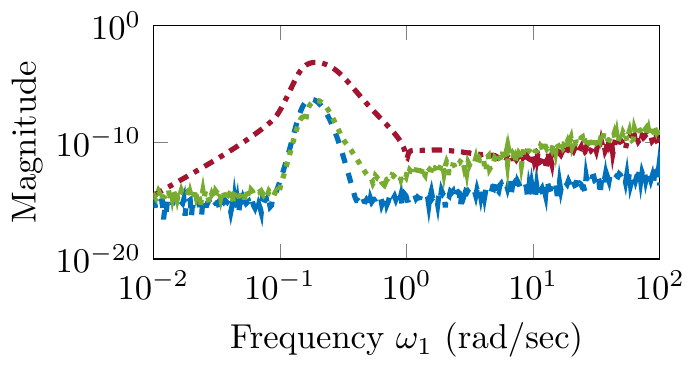}%
    {\tikzset{external/remake next}}{}%
  \begin{tikzpicture}[
  every axis/.append style = {
    scaled x ticks = false,
    x tick label style = {/pgf/number format/fixed},
    cycle list name = plotlist}
  ]
  \pgfplotstableread{graphics/data/mimo_msd_freq_g1_err.dat}\tableRELERR
  
  \begin{loglogaxis}[%
    width  = .7\textwidth,
    height = .1\textheight,
    scale only axis,
    xmin = 1e-2,
    xmax = 1e+2,
    ymin = 1e-20,
    ymax = 1,
    xminorticks = false,
    yminorticks = false,
    xlabel = {Frequency $\omega_{1}$ (rad/sec)},
    ylabel = {Magnitude},
    ylabel style = {yshift = -.3em}]
        
    \pgfplotsset{cycle list shift = 1}
    \addplot table[x index = 0, y index = 1] {\tableRELERR};
    \addplot table[x index = 0, y index = 2] {\tableRELERR};
    \addplot table[x index = 0, y index = 3] {\tableRELERR};
  \end{loglogaxis}
\end{tikzpicture}%
  \tikzexternaldisable%

        \subcaption{Relative errors.}
      \end{center}
    \end{subfigure}
    \vspace{.5\baselineskip}

  \tikzexternalenable%
  \tikzsetnextfilename{mimo_msd_freq_g1_legend}%
  \filemodCmp{graphics/mimo_msd_freq_g1_legend.tikz}{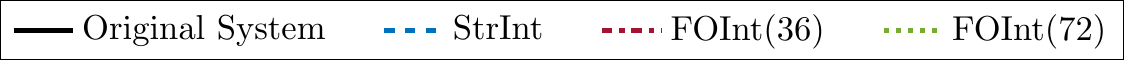}%
    {\tikzset{external/remake next}}{}%
  \begin{tikzpicture}[
  every axis/.append style = {
    scaled x ticks = false,
    x tick label style = {/pgf/number format/fixed},
    cycle list name = plotlist}
  ]
  
  \begin{axis}[%
    hide axis,
    scale only axis,
    width = 1mm,
    legend columns = 4, 
    legend style = {
    at     = {(0,0)},
    anchor = center,
    /tikz/every even column/.append style = {column sep = 0.5cm}}]
    \pgfplotsinvokeforeach{1,...,4}{\addplot coordinates {(0,0)};}
        
    \addlegendentry{Original System};
    \addlegendentry{StrInt};
    \addlegendentry{FOInt(36)};
    \addlegendentry{FOInt(72)};
  \end{axis}
\end{tikzpicture}%
  \tikzexternaldisable%

    \caption{Frequency domain results of the first transfer functions for the
      MIMO damped mass-spring system.}
    \label{fig:mimo_msd_freq_g1}
  \end{center}
\end{figure}

As in \Cref{sec:siso_msd}, we compare the structure-preserving 
interpolation method (StrInt) with the unstructured one, using the first-order 
realization of~\cref{eqn:msd_mimo} (FOInt).
For the construction of StrInt and FOInt(36), we choose
$\pm \texttt{logspace(-4, 4, 3)} \iu$ as interpolation points for the first 
transfer function and $\pm \texttt{logspace(-3, 3, 3)} \iu$ for the second one.
Additionally, we construct another first-order approximation, FOInt(72), twice 
as large as the structured interpolation by taking
$\pm \texttt{logspace(-4, 4, 6)} \iu$ and $\pm \texttt{logspace(-3, 3, 6)} \iu$, 
as interpolation points for the first and second transfer functions, 
respectively.
Also, we restrict ourselves again to a one-sided projection as in part (a) of 
\Cref{thm:mimovw} by setting $W = V$, which yields the reduced order
$r = 36$ for StrInt and FOInt(36), and $r = 72$ for FOInt(72).

\begin{figure}[tb]
  \begin{center}
    \begin{subfigure}[t]{.49\textwidth}
      \begin{center}
  \tikzexternalenable%
  \tikzsetnextfilename{mimo_msd_freq_g2_err_so}%
  \filemodCmp{graphics/mimo_msd_freq_g2_err_so.tikz}{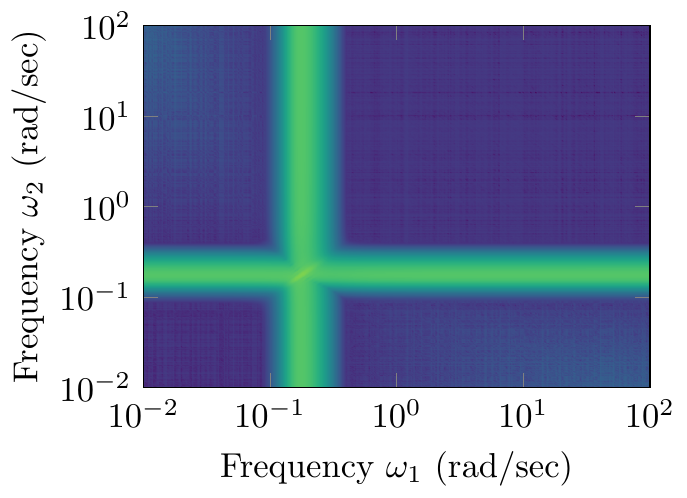}%
    {\tikzset{external/remake next}}{}%
  \begin{tikzpicture}
  \begin{loglogaxis}[
    view   = {0}{90},
    width  = .7\textwidth,
    height = .5\textwidth,
    scale only axis,
    axis on top,
    xmin   = 1e-2,
    xmax   = 1e+2,
    ymin   = 1e-2,
    ymax   = 1e+2,
    xtick  = {1e-2, 1e-1, 1e0, 1e+1, 1e+2},
    ytick  = {1e-2, 1e-1, 1e0, 1e+1, 1e+2},
    xminorticks = false,
    yminorticks = false,
    xlabel = {Frequency $\omega_{1}$ (rad/sec)},
    ylabel = {Frequency $\omega_{2}$ (rad/sec)},
    ylabel style = {yshift = -.3em},
    scaled x ticks = false,
    x tick label style = {/pgf/number format/fixed}]
        
      \addplot graphics[xmin = 1e-2, xmax = 1e+2, ymin = 1e-2, ymax = 1e+2]
        {graphics/data/mimo_msd_freq_g2_err_so.pdf};
            
  \end{loglogaxis}
\end{tikzpicture}%
  \tikzexternaldisable%

        \subcaption{StrInt.}
      \end{center}
    \end{subfigure}
    \hfill
    \begin{subfigure}[t]{.49\textwidth}
      \begin{center}
  \tikzexternalenable%
  \tikzsetnextfilename{mimo_msd_freq_g2_err_fo1}%
  \filemodCmp{graphics/mimo_msd_freq_g2_err_fo1.tikz}{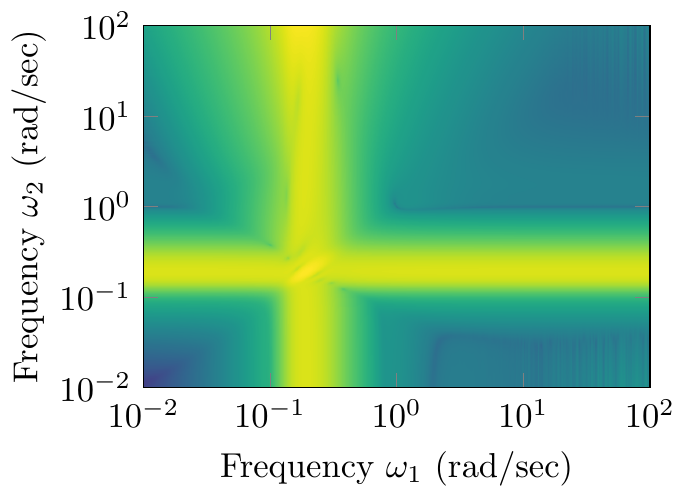}%
    {\tikzset{external/remake next}}{}%
  \begin{tikzpicture}
  \begin{loglogaxis}[
    view   = {0}{90},
    width  = .7\textwidth,
    height = .5\textwidth,
    scale only axis,
    axis on top,
    xmin   = 1e-2,
    xmax   = 1e+2,
    ymin   = 1e-2,
    ymax   = 1e+2,
    xtick  = {1e-2, 1e-1, 1e0, 1e+1, 1e+2},
    ytick  = {1e-2, 1e-1, 1e0, 1e+1, 1e+2},
    xminorticks = false,
    yminorticks = false,
    xlabel = {Frequency $\omega_{1}$ (rad/sec)},
    ylabel = {Frequency $\omega_{2}$ (rad/sec)},
    ylabel style = {yshift = -.3em},
    scaled x ticks = false,
    x tick label style = {/pgf/number format/fixed}]
        
      \addplot graphics[xmin = 1e-2, xmax = 1e+2, ymin = 1e-2, ymax = 1e+2]
        {graphics/data/mimo_msd_freq_g2_err_fo1.pdf};
            
  \end{loglogaxis}
\end{tikzpicture}%
  \tikzexternaldisable%

        \subcaption{FOInt(36).}
      \end{center}
    \end{subfigure}
    
    \begin{subfigure}[t]{.49\textwidth}
      \begin{center}
  \tikzexternalenable%
  \tikzsetnextfilename{mimo_msd_freq_g2_err_fo2}%
  \filemodCmp{graphics/mimo_msd_freq_g2_err_fo2.tikz}{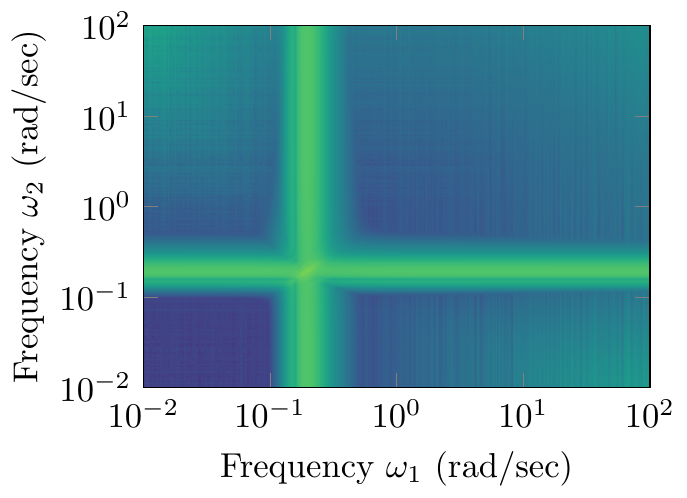}%
    {\tikzset{external/remake next}}{}%
  \begin{tikzpicture}
  \begin{loglogaxis}[
    view   = {0}{90},
    width  = .7\textwidth,
    height = .5\textwidth,
    scale only axis,
    axis on top,
    xmin   = 1e-2,
    xmax   = 1e+2,
    ymin   = 1e-2,
    ymax   = 1e+2,
    xtick  = {1e-2, 1e-1, 1e0, 1e+1, 1e+2},
    ytick  = {1e-2, 1e-1, 1e0, 1e+1, 1e+2},
    xminorticks = false,
    yminorticks = false,
    xlabel = {Frequency $\omega_{1}$ (rad/sec)},
    ylabel = {Frequency $\omega_{2}$ (rad/sec)},
    ylabel style = {yshift = -.3em},
    scaled x ticks = false,
    x tick label style = {/pgf/number format/fixed}]
        
      \addplot graphics[xmin = 1e-2, xmax = 1e+2, ymin = 1e-2, ymax = 1e+2]
        {graphics/data/mimo_msd_freq_g2_err_fo2.pdf};
            
  \end{loglogaxis}
\end{tikzpicture}%
  \tikzexternaldisable%

        \subcaption{FOInt(72).}
      \end{center}
    \end{subfigure}
    \vspace{.5\baselineskip}

  \tikzexternalenable%
  \tikzsetnextfilename{mimo_msd_freq_g2_legend}%
  \filemodCmp{graphics/mimo_msd_freq_g2_legend.tikz}{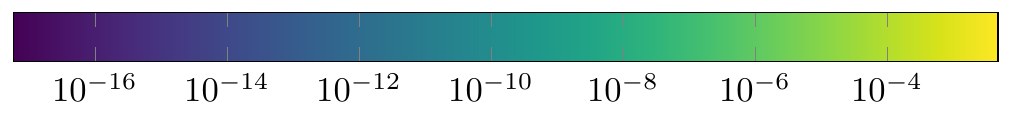}%
    {\tikzset{external/remake next}}{}%
  \begin{tikzpicture}
  \node[draw = none, minimum width = 0cm, inner sep = 0cm](start){};
  \node(leg) at (start.north east) [anchor = north west]{\tikz
  \begin{axis}[%
    hide axis,
    scale only axis,
    width  = 10cm,
    height = .1cm,
    point meta min = -17.2325,
    point meta max = -2.3211,
    colorbar,
    colorbar horizontal,
    colorbar style = {
      xticklabel = $10^{\pgfmathparse{\tick}
        \pgfmathprintnumber\pgfmathresult}$,
      at = {(.5, 0)},
      anchor = north},
    scaled x ticks = false,
    x tick label style = {/pgf/number format/fixed}]
  \end{axis};};
  \node[draw = none, minimum width = .0cm, inner sep = 0cm](end)
    at (leg.north east) [anchor = north west]{};
\end{tikzpicture}%
  \tikzexternaldisable%

    \caption{Relative errors of the second transfer functions for the
      MIMO damped mass-spring system.}
    \label{fig:mimo_msd_freq_g2}
  \end{center}
\end{figure}

\Cref{fig:mimo_msd_time} shows the results in time domain, where
we have chosen the input signal
\begin{align*}
  u(t) & = \begin{bmatrix} \sin(200 t) + 200 \\ -\cos(200 t) - 200 \end{bmatrix}
\end{align*}
and measured point-wise the relative errors as
\begin{align*}
  \frac{\lVert y(t) - \hy(t) \rVert_{2}}{\lVert y(t) \rVert_{2}},
\end{align*}
for $t \in [0, 100]$.
The different lines in \Cref{fig:mimo_msd_time_sim} with the same color
result from the two system outputs.
In contrast to the SISO case, the linear part of the larger unstructured 
approximation (FOInt(72)) is not asymptotically stable anymore, which leads to 
the fast diverging behavior in the time simulation.
The other first-order approximation (FOInt(36)) has a stable linear part but, as
in the SISO case, is not able to produce stable results in the time simulation.
StrInt again approximates the system's behavior accurately in the considered time
range and, by using one-sided projection, resembles the mechanical structures
of the original system.
\Cref{fig:mimo_msd_freq_g1,fig:mimo_msd_freq_g2} show the 
results of the approximations for the first two transfer functions, where
the relative errors are computed by
\begin{align*}
  \begin{aligned}
    \frac{\lVert G_{1}(\omega_{1} \iu) - \hG_{1}(\omega_{1} \iu) 
      \rVert_{2}}{\lVert G_{1}(\omega_{1} \iu) \rVert_{2}} &&& \text{and} &
      \frac{\lVert G_{2}(\omega_{1} \iu, \omega_{2} \iu) -
      \hG_{2}(\omega_{1} \iu, \omega_{2} \iu) \rVert_{2}}{\lVert 
      G_{2}(\omega_{1} \iu, \omega_{2} \iu) \rVert_{2}},
  \end{aligned}
\end{align*}
for $\omega_{1}, \omega_{2} \in [10^{-2}, 10^{2}]$.
For both transfer function levels, we observe that FOInt(36) is not as accurate
as StrInt and FOInt(72), which both nicely approximate the transfer functions
except for higher frequencies, where the unstructured approximation seems
to have the same numerical drift-off effect as in the SISO case.

%%%%%%%%%%%%%%%%%%%%%%%%%%%%%%%%%%%%%%%%%%%%%%%%%%%%%%%%%%%%%%%%%%%%%%%%%%%%%%%%
% CONCLUSIONS.                                                                 %
%%%%%%%%%%%%%%%%%%%%%%%%%%%%%%%%%%%%%%%%%%%%%%%%%%%%%%%%%%%%%%%%%%%%%%%%%%%%%%%%

\section{Conclusions}%
\label{sec:conclusions}

We extended the structure-preserving interpolation framework to bilinear control 
systems.
First, we developed the subspace conditions for structured interpolation for  
single-input single-output systems, both for simple and Hermite interpolation. 
These results were extended to structured multi-input multi-output 
bilinear systems as well in the setting of full matrix interpolation.
The effectiveness of the proposed approach was illustrated for two 
structured bilinear dynamical systems: a mass-spring-damper system and a model 
with internal delay.
The theory developed here can be applied to a much broader class of structures 
than these two examples.

In our examples, we made the rather simple choice of logarithmically
equidistant interpolation points on the first two transfer function levels; thus 
the crucial problem of choosing good/optimal interpolation points remains open. 
This question is not fully resolved even for structure-preserving interpolation 
of linear dynamical systems. 
Another issue to further investigate is the rapidly-enlarging reduced-order 
dimension in case of the matrix interpolation approach for multi-input 
multi-output systems.
While in the linear case, tangential interpolation can be used to control
the growth of the basis, there is no uniform treatment of tangential 
interpolation for bilinear systems yet.
This issue will be studied in a separate work.

%%%%%%%%%%%%%%%%%%%%%%%%%%%%%%%%%%%%%%%%%%%%%%%%%%%%%%%%%%%%%%%%%%%%%%%%%%%%%%%%
% *** ACKNOWLEDGEMENTS ***                                                     %
%%%%%%%%%%%%%%%%%%%%%%%%%%%%%%%%%%%%%%%%%%%%%%%%%%%%%%%%%%%%%%%%%%%%%%%%%%%%%%%%

\section*{Acknowledgments}%
\addcontentsline{toc}{section}{Acknowledgments}

Benner and Werner were supported by the German Research Foundation
(DFG) Research Training Group 2297
\textquotedblleft{}MathCoRe\textquotedblright{}, Magdeburg, and the German
Research Foundation (DFG) Priority Program 1897: \textquotedblleft{}Calm, 
Smooth and Smart -- Novel Approaches for Influencing Vibrations by Means of 
Deliberately Introduced Dissipation\textquotedblright{}.
Gugercin was supported in parts by National Science Foundation under Grant No. 
DMS-1720257 and DMS-1819110.
Part of this material is based upon work supported by the National Science 
Foundation under Grant No. DMS-1439786 and by the Simons Foundation Grant No. 
50736 while Gugercin and Benner were in residence at the Institute for 
Computational and Experimental Research in Mathematics in Providence, RI, 
during the \textquotedblleft{}Model and dimension reduction in uncertain and 
dynamic systems\textquotedblright{} program.

We would like to thank Jens Saak for constructive discussions about the
used notation and writing down of this paper, and Igor Pontes Duff Pereira and
Ion Victor Gosea for providing MATLAB codes used for the generation of the
bilinear time-delay example.

%%%%%%%%%%%%%%%%%%%%%%%%%%%%%%%%%%%%%%%%%%%%%%%%%%%%%%%%%%%%%%%%%%%%%%%%%%%%%%%%
% *** REFERENCES ***                                                           %
%%%%%%%%%%%%%%%%%%%%%%%%%%%%%%%%%%%%%%%%%%%%%%%%%%%%%%%%%%%%%%%%%%%%%%%%%%%%%%%%

\addcontentsline{toc}{section}{References}
\bibliographystyle{plainurl}
\bibliography{bibtex/myref}

\end{document}